\documentclass{amsart}
\usepackage{amssymb}
\usepackage{mathrsfs}
\usepackage[all]{xy}
\usepackage{color}

\newcommand{\C}{\mathbb{C}}
\newcommand{\F}{\mathbb{F}}
\newcommand{\R}{\mathbb{R}}
\newcommand{\Z}{\mathbb{Z}}

\newcommand{\uF}{\underline{\F}}

\newcommand{\cE}{\mathcal{E}}
\newcommand{\cF}{\mathcal{F}}
\newcommand{\cG}{\mathcal{G}}

\newcommand{\cK}{\mathcal{K}}
\newcommand{\cP}{\mathcal{P}}

\newcommand{\cS}{\mathscr{S}}
\newcommand{\cT}{\mathscr{T}}

\newcommand{\Db}{D^{\mathrm{b}}}
\newcommand{\Dp}{D^{\mathrm{pf}}}

\newcommand{\Kb}{K^{\mathrm{b}}}
\newcommand{\cD}{\mathscr{D}}

\newcommand{\cA}{\mathscr{A}}

\newcommand{\cM}{\mathscr{M}}

\newcommand{\win}{\diamondsuit}
\DeclareMathOperator{\wt}{wt}
\newcommand{\Vect}{\mathrm{Vect}}
\DeclareMathOperator{\Irr}{Irr}
\DeclareMathOperator{\Ind}{Ind}

\newcommand{\MHM}{\mathrm{MHM}}
\newcommand{\MHMs}{\MHM_{\cS}}
\newcommand{\MHMso}{\MHM_{\cS}^\win}
\newcommand{\Perv}{\mathrm{Perv}}
\newcommand{\Pervs}{\Perv_{\cS}}
\newcommand{\Pure}{\mathsf{Pure}}
\newcommand{\MC}{\mathcal{L}}
\newcommand{\IC}{\mathrm{IC}}
\newcommand{\pt}{\mathrm{pt}}
\newcommand{\exs}{\mathrm{rat}}

\newcommand{\degr}{\zeta}
\newcommand{\ds}{\Db_{\cS}}
\newcommand{\dsp}{\Db_{\cS,\Perv}}
\newcommand{\dsm}{\Db_{\cS,\MHM}}

\newcommand{\HH}{\mathcal{H}_{\mathrm{Hodge}}}

\DeclareMathOperator{\gr}{gr}

\newcommand{\const}{{\mathrm{const}}}

\newcommand{\rso}{\rho^\win}
\newcommand{\rsc}{\rho}
\DeclareMathOperator{\Hom}{Hom}
\DeclareMathOperator{\End}{End}
\DeclareMathOperator{\Ext}{Ext}
\DeclareMathOperator{\uHom}{\underline{Hom}}
\DeclareMathOperator{\uEnd}{\underline{End}}

\newcommand{\simto}{\overset{\sim}{\to}}

\newcommand{\gHom}{\mathbf{Hom}}
\newcommand{\hexs}{\widehat\exs}
\newcommand{\hbeta}{\hat\beta}

\DeclareMathOperator{\grHom}{grHom}

\newcommand{\half}{{\mathchoice{\textstyle\frac12}{\frac12}{\frac12}{1/2}}}
\newcommand{\nhalf}{{\mathchoice{\textstyle\frac n2}{\frac n2}{\frac n2}{n/2}}}
\newcommand{\rmod}{\mathrm{mod}\textup{-}}
\newcommand{\rgmod}{\mathrm{gmod}\textup{-}}
\newcommand{\rdgmod}{\mathrm{dgmod}\textup{-}}
\newcommand{\rdggmod}{\mathrm{dggmod}\textup{-}}
\newcommand{\uE}{\underline{E}}
\newcommand{\ucA}{\underline{\boldsymbol{\cA}}}
\newcommand{\ucE}{\underline{\boldsymbol{\cE}}}
\newcommand{\For}{\mathsf{For}}

\newcommand{\U}{\mathsf{U}}

\newcommand{\bE}{\mathbb{E}}

\newcommand{\ualpha}{\underline{\alpha}}
\newcommand{\MHmod}{\mathrm{MHmod}\textup{-}}
\newcommand{\MHmodss}{\mathrm{MHmod}^{\mathrm{ss}}\textup{-}}
\newcommand{\gVect}{\mathrm{gVect}}

\numberwithin{equation}{section}
\newtheorem{thm}{Theorem}[section]
\newtheorem{lem}[thm]{Lemma}
\newtheorem{prop}[thm]{Proposition}
\newtheorem{cor}[thm]{Corollary}
\theoremstyle{definition}
\newtheorem{defn}[thm]{Definition}
\theoremstyle{remark}
\newtheorem{rmk}[thm]{Remark}

\title{Koszul duality and mixed Hodge modules}
\author[P.N. Achar]{Pramod N.~Achar}
\address{266 Lockett Hall\\
  Department of Mathematics\\
  Louisiana State University\\
  Baton Rouge, LA \ 70803\\
  U.S.A.}
\email{pramod@math.lsu.edu}

\author{S. Kitchen}
\address{Mathematisches Facult\"at\\
  Albert-Ludwigs-Universit\"at Freiburg\\
  Eckerstrasse 1,\\
  79104 Freiburg\\
  Germany}
\email{sarah.kitchen@math.uni-freiburg.de}

\subjclass[2000]{18E30, 16S37, 14D07}
\keywords{Koszul duality, mixed Hodge module}
\thanks{This work was partially supported by NSF Grant No.~DMS-1001594.}

\begin{document}

\begin{abstract}
We prove that on a certain class of smooth complex varieties (those with ``affine even stratifications''), the category of mixed Hodge modules is ``almost'' Koszul: it becomes Koszul after a few unwanted extensions are eliminated.  We also give an equivalence between perverse sheaves on such a variety and modules for a certain graded ring, obtaining a formality result as a corollary.  For flag varieties, these results were proved earlier by Beilinson--Ginzburg--Soergel using a rather different construction.
\end{abstract}

\maketitle

\section{Introduction}
\label{sect:intro}

In their seminal paper on Koszul duality in representation theory~\cite{bgs}, Beilinson, Ginzburg, and Soergel established the Koszulity of two important geometric categories: the category of mixed perverse sheaves on a flag variety over a finite field, and the category of mixed Hodge modules on a flag variety over $\C$.  More precisely, they are each ``almost'' Koszul, in that they contain some unwanted extensions, but once those are removed, what remains is a Koszul category.

A key step in~\cite{bgs} is that of giving a concrete description of the extensions to be removed.  However, the two cases are treated very differently.  For $\ell$-adic perverse sheaves, the description preceding~\cite[Theorem~4.4.4]{bgs} is quite general; it applies to any variety satisfying a couple of axioms (cf.~\cite[Lemma~4.4.1]{bgs}), and the proof of Koszulity uses only general results about \'etale cohomology and homological algebra.  In contrast, for mixed Hodge modules (cf.~\cite[Theorem~4.5.4]{bgs}), the description is a rather opaque condition that makes sense only on the full flag variety of a reductive group.  The resulting category is not canonical (it depends on a choice), and the proof of Koszulity depends on detailed knowledge of the structure of one specific projective object.  As written, this part of~\cite{bgs} does not even apply to partial flag varieties (see Remark~\ref{rmk:ginzburg}, however).

The present paper was motivated by a desire to understand the source of this mismatch.  One way to ``remove extensions'' in an abelian category is to discard some objects, i.e., take a subcategory.  Another way (which we call ``winnowing'') is to \emph{add new morphisms} that split formerly nonsplit extensions.  (See Remark~\ref{rmk:winnow-add}.)  In~\cite{bgs}, the subcategory approach is used for both $\ell$-adic perverse sheaves and mixed Hodge modules, but this turns out to be unnatural for mixed Hodge modules: they are much better suited to winnowing.  One reason is that the failure of Koszulity happens in opposite ways in the two cases; see Remark~\ref{rmk:failure}.

In this paper, we develop an approach to Koszul duality for mixed Hodge modules via winnowing.  We prove two main results, both valid on any variety satisfying a few axioms.  The first, Theorem~\ref{thm:koszul}, states that the winnowing of the category of mixed Hodge modules is Koszul.  Unlike the subcategory constructed in~\cite{bgs}, the winnowing does not depend on any choices.  Nevertheless, on a full flag variety, each subcategory from~\cite{bgs} is canonically equivalent to the winnowing.

In the winnowing construction, the notion of ``underlying perverse sheaf'' is lost, and it becomes a nontrivial task to construct a well-behaved functor (called a ``degrading functor'') from the winnowed category of mixed Hodge modules to the category of perverse sheaves.  This is the content of the second main result, Theorem~\ref{thm:halfdegr}.  As a corollary, we obtain a formality result that generalizes a theorem of Schn\"urer.  It turns out that the degrading functor is not canonical in general.  This is roughly why the~\cite{bgs} construction requires choices; the precise relationship is discussed in Section~\ref{subsect:bgs}.

The paper is organized as follows.  Sections~\ref{sect:homol} and~\ref{sect:prelim} contain generalities on homological algebra and on varieties and sheaves, respectively, and Section~\ref{sect:homext} contains some technical lemmas on various $\Hom$- and $\Ext$-groups.  The Koszulity theorem is proved in Section~\ref{sect:koszul}, and the degrading functor is constructed in Section~\ref{sect:degr}.  Finally, Section~\ref{sect:compat} discusses how to compare the degrading functor to the underlying perverse sheaf of a mixed Hodge module.

\subsection*{Acknowledgments}
The authors are grateful to S.~Riche, W.~Soergel, and an anonymous referee for their helpful insights and suggestions.

\section{Preliminaries on homological algebra}
\label{sect:homol}

\subsection{Triangulated categories and realization functors}

For any triangulated category $\cD$ and any two objects $X, Y \in \cD$, we write
\[
\Hom_\cD^i(X,Y) = \Hom_\cD(X, Y[i]).
\]
If $\cM$ is the heart of a bounded $t$-structure on $\cD$, then, under some mild assumptions~\cite{bbd,b}, one can construct a $t$-exact functor of triangulated categories
\[
\mathrm{real}: \Db\cM \to \cD
\]
whose restriction to $\cM$ is the identity functor.  This functor, called a ``realization functor,'' induces maps
\begin{equation}\label{eqn:exti-real}
\mathrm{real}: \Ext_\cM^i(X,Y) \to \Hom_\cD^i(X,Y)
\end{equation}
for all $i \ge 0$.  (In fact, the map~\eqref{eqn:exti-real} is independent of the choice of realization functor, because $\Ext^i({-},{-})$ is a universal $\delta$-functor.)    According to~\cite[Remarque~3.1.17]{bbd}, these maps enjoy special properties for small values of $i$:
\begin{align}
\mathrm{real} &: \Ext^i_{\cM}(X,Y) \simto \Hom^i_{\cD}(X,Y) &&\text{is an isomorphism for $i = 0$ or $1$,} \label{eqn:ext1-real} \\
\mathrm{real} &: \Ext^2_{\cM}(X,Y) \hookrightarrow \Hom^2_{\cD}(X,Y) && \text{is injective.} \label{eqn:ext2-real}
\end{align}
In this paper, all triangulated categories will be linear over some field $\F$.

\subsection{Mixed and Koszul categories}
\label{subsect:mixed-kos}

Recall that a finite-length abelian category $\cM$ is said to be \emph{mixed}~\cite{bgs} if it is equipped with a function $\wt: \Irr(\cM) \to \Z$ (where $\Irr(\cM)$ denotes the set of isomorphism classes of simple objects) such that
\begin{equation}\label{eqn:ab-mixed}
\Ext^1(L,L') = 0
\qquad
\text{if $L, L'$ are simple objects with $\wt(L') \ge \wt (L)$.}
\end{equation}
The category $\cM$ is \emph{Koszul} if it obeys the stronger condition that
\[
\Ext^i(L,L') = 0
\qquad
\text{if $L, L'$ are simple objects with $\wt(L') \ne \wt(L) - i$.}
\]

Suppose now that $\cM$ is also the heart of a bounded $t$-structure on a triangulated category $\cD$.  Then $\cD$ is said to be \emph{mixed} if
\begin{equation}\label{eqn:tri-mixed}
\Hom^i_\cD(L,L') = 0
\qquad
\text{if $L, L' \in \cM$ are simple and $\wt(L') > \wt (L) - i$.}
\end{equation}
For any two integers $n \le m$, we denote by
\[
\cM_{\le n}, \quad \cM_{\ge n}, \quad \cM_{[n,m]}, \quad \cM_n
\]
the Serre subcategories of $\cM$ generated by simple objects whose weight $w$ satisfies $w \le n$, $w \ge n$, $w \in [n,m]$, or $w =n$, respectively.
 Condition \eqref{eqn:tri-mixed} implies that
\begin{equation}\label{eqn:tri-gen}
\Hom^i_\cD(X,Y) = 0
\qquad
\text{if $X \in \cM_{\le n}$ and $Y \in \cM_{\ge n+1-i}$.}
\end{equation}
In the special case where $\cD = \Db\cM$, both these conditions are implied by~\eqref{eqn:ab-mixed}.

For any object $X \in \cM$ and any $n \in \Z$, there is a functorial short exact sequence
\begin{equation}\label{eqn:mixed-ses}
0 \to X_{\le n} \to X \to X_{\ge n+1} \to 0,
\end{equation}
where every simple composition factor of $X_{\le n}$ (resp.~$X_{\ge n+1}$) has weight${}\le n$ (resp.${}\ge n+1$). We also define
\[
\gr_n X = (X_{\le n})_{\ge n} \cong (X_{\ge n})_{\le n}.
\]
It is easily seen that if $f: X \to Y$ is a morphism such that $\gr_n f = 0$ for all $n$, then $f = 0$.  An object $X$ is \emph{pure of weight $n$} if $X \cong \gr_n X$.  Every pure object is semisimple.


\begin{lem}\label{lem:extn-surj}
Let $\cM$ be a mixed abelian category, and let $A, B \in \cM$ be pure objects of weights $n$ and $n-i$, respectively.  The product map
\[
\bigoplus_{\substack{S \in \Irr(\cM)\\ \wt(S) = n - 1}}
\Ext^1(A, S) \otimes \Ext^{i-1}(S, B) \to \Ext^i(A, B)
\]
is surjective.
\end{lem}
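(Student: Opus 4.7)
My plan is to prove the lemma by showing that any Yoneda $i$-extension representing $\xi$ can be modified, without changing its Yoneda class, so that each of its intermediate ``kernel objects'' becomes pure of the appropriate weight.

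I would first dispatch $i = 1$: here $B$ is pure of weight $n-1$ hence semisimple, so $B = \bigoplus_\alpha S_\alpha$ and $\Ext^1(A,B) = \bigoplus_\alpha \Ext^1(A, S_\alpha)$ is manifestly in the image of the product map. For $i \ge 2$, I would fix a Yoneda representative
\[
0 \to B \to X_{i-1} \to \cdots \to X_0 \to A \to 0,
\]
define $W_j := \mathrm{im}(X_j \to X_{j-1})$ for $1 \le j \le i-1$ (with $W_0 := A$, $W_i := B$), and write $\alpha_j \in \Ext^1(W_{j-1}, W_j)$ for the class of $0 \to W_j \to X_{j-1} \to W_{j-1} \to 0$. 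The main claim is that the $W_j$ can be successively modified so that each becomes pure of weight $n - j$; once this is done, $W_1$ will be pure of weight $n-1$, and writing $\xi = (\alpha_i \cdots \alpha_2) \cdot \alpha_1$ yields the required factorization through $S := W_1$.

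The proof of the claim has two passes. In Pass 1 I would iterate $j = 1, \ldots, i-1$, replacing each $W_j$ by its subobject $W_{j, \le n - j}$: inductively, $W_{j-1}$ has weights $\le n - j + 1$ at step $j$, so axiom \eqref{eqn:ab-mixed} gives $\Ext^1(W_{j-1}, W_{j, \ge n - j + 1}) = 0$, and the long exact sequence for $\Hom(W_{j-1}, -)$ applied to the weight filtration of $W_j$ shows that $\alpha_j$ lifts through the inclusion $W_{j, \le n - j} \hookrightarrow W_j$. In Pass 2 I would iterate $j = i-1, \ldots, 1$, replacing each $W_j$ by its quotient $W_{j, \ge n - j}$: at step $j$, the $W_{j+1}$ will already have been made pure of weight $n - j - 1$ by the previous step, so \eqref{eqn:ab-mixed} gives $\Ext^1(W_{j, \le n - j - 1}, W_{j+1}) = 0$, and the long exact sequence for $\Hom(-, W_{j+1})$ lifts $\alpha_{j+1}$ through $W_j \twoheadrightarrow W_{j, \ge n - j}$. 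After both passes, each $W_j$ has weights simultaneously $\le n - j$ and $\ge n - j$, hence is pure.

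The main obstacle I anticipate is bookkeeping: I must check carefully that each replacement preserves the Yoneda composition representing $\xi$, which amounts to verifying that precomposition with the new inclusion (in Pass 1), or postcomposition with the new quotient (in Pass 2), absorbs into the modified class via the appropriate naturality of Yoneda product. The conceptual heart is that axiom \eqref{eqn:ab-mixed} is exactly strong enough to enable both passes in sequence, with Pass 1 propagating a ``weights $\le n-j$'' constraint outward from the $A$-end and Pass 2 propagating a ``weights $\ge n-j$'' constraint inward from the $B$-end.
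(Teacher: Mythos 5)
Your argument is correct, but it takes a genuinely different route from the paper's. The paper never splices a full Yoneda representative: it invokes the general fact (Bourbaki) that any class in $\Ext^i(A,B)$ is a product of some $f\in\Ext^1(A,P)$ and $g\in\Ext^{i-1}(P,B)$ through a single intermediate object $P$, and then purifies that one object in two steps --- $f$ factors through $P_{\le n-1}[1]$ because $\Ext^1(A,P_{\ge n})=0$ by mixedness, and the restricted $g$ factors through $\gr_{n-1}P$ because $\Hom^{i-1}(P_{\le n-2},B)=0$; this last step is the higher-degree vanishing \eqref{eqn:tri-gen}, valid in $\Db\cM$ as a consequence of \eqref{eqn:ab-mixed}. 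Your two-pass scheme replaces that single higher-$\Ext$ vanishing by a chain of degree-one vanishings, one per intermediate kernel, propagated outward from $A$ and then inward from $B$; the bookkeeping you flag is harmless, since each replacement is absorbed into the Yoneda class by associativity of composition in $\Db\cM$ (precomposition with the inclusion in Pass 1, postcomposition with the quotient in Pass 2), and purity at the end uses the easy observation that the Pass 2 quotient of an object of weights $\le n-j$ still has weights $\le n-j$. What your version buys is that it uses only the defining axiom \eqref{eqn:ab-mixed} (via d\'evissage) in degree $1$, never the derived-category statement \eqref{eqn:tri-gen}, and it yields the slightly stronger conclusion that every class has a representative all of whose intermediate kernels are pure of linearly decreasing weights; what the paper's version buys is brevity, since only one object is modified. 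Two small points to add for completeness: $A$ and $B$ themselves are never altered, and at the end you should decompose the pure weight-$(n-1)$ object $W_1$ into simple summands (pure objects are semisimple and $\Ext^\bullet$ commutes with finite direct sums), exactly as in your $i=1$ case, so as to land in the stated direct sum over $S\in\Irr(\cM)$.
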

\begin{proof}
Consider an element $\phi \in \Ext^i(A,B)$.  As in any abelian category, there is an object $P$ such that $\phi$ lies in the image of the product map
\[
\Ext^1(A,P) \otimes \Ext^{i-1}(P,B) \to \Ext^i(A,B).
\]
(See, for instance,~\cite[Proposition~3 in \S7.4]{bourbaki}.)  If $P$ happens to be pure of weight $n-1$, then we are finished, since every pure object is semisimple, and $\Ext^\bullet$ commutes with finite direct sums.  

Otherwise, we must show how to replace $P$ by such a pure object.  Let $f: A \to P[1]$ and $g: P \to B[i-1]$ be morphisms in $\Db\cM$ such that $\phi = g[1] \circ f$.  Because $\cM$ is mixed, the composition $A \overset{f}{\to} P[1] \to P_{\ge n}[1]$ vanishes.  Thus, $f$ factors through a map $f': A \to P_{\le n-1}[1]$.  Let $g'$ denote the composition $P_{\le n-1} \to P \overset{g}{\to} B[i-1]$.  Then $g'[1] \circ f' = \phi$.

Now consider the short exact sequence $0 \to P_{\le n-2} \to P_{\le n-1} \to \gr_{n-1}P \to 0$.  By~\eqref{eqn:tri-gen}, the composition $P_{\le n-2} \to P_{\le n-1} \overset{g'}{\to} B[i-1]$ vanishes, so $g'$ factors through a map $g'': \gr_{n-1}P \to B[i-1]$.  Let $f''$ be the composition $A \overset{f'}{\to} P_{\le n-1}[1] \to \gr_{n-1}P[1]$.  We have $\phi = g''[1] \circ f''$.  Thus, $\phi$ lies in the image of the product map $\Ext^1(A, \gr_{n-1}P) \otimes \Ext^{i-1}(\gr_{n-1}P,B) \to \Ext^i(A,B)$,  as desired.
\end{proof}

\subsection{The winnowing construction}
\label{subsect:winnowmix}

Let $\cD$ be a mixed triangulated category with heart $\cM$ as above.  Recall that both are linear over some field $\F$.  We will work with the following full additive subcategory of $\cD$:
\begin{equation}\label{eqn:pure-defn}
\Pure(\cD) = \{ X \in \cD \mid \text{$H^i(X) \in \cM_i$ for all $i \in \Z$} \}.
\end{equation}
Here, ``$H^i({-})$'' denotes cohomology with respect to the given $t$-structure on $\cD$.  It is easy to prove that every object of $\Pure(\cD)$ is semisimple, i.e., a direct sum of shifts of simple objects of $\cM$.  In other words, if we let $\Ind(\Pure(\cD))$ denote the set of isomorphism classes of indecomposable objects of $\cA$, then we have a bijection
\[
\Irr(\cM) \simto \Ind(\Pure(\cD))
\qquad\text{given by}\qquad
L \mapsto L[-\wt L].
\]
In order to apply certain results of~\cite{ar}, we now impose the additional assumptions:
\begin{align}
\End(L) &\cong \F &&\text{for $L \in \cM$ simple, and} \label{eqn:mixedsplit} \\
\dim \Hom(X,Y) &< \infty &&\text{for all $X, Y \in \Pure(\cD)$.} \label{eqn:dimhomfin}
\end{align}
Let us define a ``degree function'' $\deg: \Ind(\Pure(\cD)) \to \Z$ by
\[
\deg(L[-\wt L]) = \wt L \quad\text{for $L \in \Irr(\cM)$.}
\]
This function makes $\Pure(\cD)$ into an \emph{Orlov category} in the sense of~\cite{ar}.  

\begin{defn}
Let $\cM$ and $\cD$ be as above.  The full subcategory
\[
\cM^\win = 
\left\{
X \in \Kb\Pure(\cD) \,\bigg|\,
\begin{array}{c}
\text{$X$ is isomorphic to a bounded complex $C^\bullet$} \\
\text{in which each $C^i \in \Pure(\cD)$ is a sum} \\
\text{of indecomposable objects of degree $-i$}
\end{array}
\right\}
\]
of the homotopy category $\Kb\Pure(\cD)$ is called the \emph{winnowing} of $\cM$.
\end{defn}

\noindent
Note that $\cM^\win$ is not defined intrinsically in terms of $\cM$ alone: it depends on both $\cD$ and on the mixed structure on $\cM$, although this is not reflected in the notation.

According to~\cite[Proposition~5.4 and Corollary~5.5]{ar}, $\cM^\win$ (which was denoted ``$\mathsf{Kos}(\Pure(\cD))$'' in \emph{loc.~cit.}) is the heart of a $t$-structure on $\Kb\Pure(\cD)$.  In fact, $\cM^\win$ is a finite-length category with a natural mixed structure; the simple objects and weight function are given by
\[
\Irr(\cM^\win) = \{ S[\deg S] \mid S \in \Ind(\Pure(\cD)) \}
\qquad\text{and}\qquad
\wt(S[\deg S]) = \deg S.
\]
Moreover, a strong version of~\eqref{eqn:tri-mixed} holds here: by~\cite[Equation~(5.4)]{ar},
\begin{equation}\label{eqn:orlov-mixed}
\Hom^i_{\Kb\Pure(\cD)}(L,L') = 0
\qquad
\begin{array}{@{}c@{}}
\text{if $L, L' \in \cM^\win$ are simple objects} \\
\text{with $\wt(L') \ne \wt (L) - i$.}
\end{array}
\end{equation}

In the special case where $\cD = \Db\cM$, the category $\Pure(\cD)$ coincides with the one denoted $\mathsf{Orl}(\cM)$ in~\cite[Proposition~5.9]{ar}, and if $\cM$ is Koszul to begin with,~\cite[Theorem~5.10]{ar} asserts that $\cM^\win \cong \cM$.  Of course, in the present paper, we will apply this construction in a case where $\cM$ is not Koszul.

\begin{prop}\label{prop:winnow-beta}
There is an exact, faithful functor
\[
\beta: \cM \to \cM^\win
\]
that preserves weights, and such that the restrictions
\[
\beta: \cM_{n} \to \cM^\win_{n}
\qquad\text{and}\qquad
\beta: \cM_{[n,n+1]} \to \cM^\win_{[n,n+1]}
\]
are equivalences of categories.
\end{prop}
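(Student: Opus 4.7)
The plan is to build $\beta$ explicitly from the weight filtration on $X\in\cM$. Set $\beta(X)^{-n} = (\gr_n X)[-n]\in\Pure(\cD)$; since $\gr_n X$ is pure of weight $n$, this is a sum of indecomposables of degree $n$, as required for a term in cohomological position $-n$ of an object of $\cM^\win$. For the differential $d^{-n}\colon \beta(X)^{-n}\to\beta(X)^{-n+1}$, I take the appropriately shifted composition $\gr_n X \xrightarrow{\delta_n} X_{\le n-1}[1] \xrightarrow{\pi_{n-1}[1]} (\gr_{n-1} X)[1]$, where $\delta_n$ is the connecting morphism of the triangle $X_{\le n-1}\to X_{\le n}\to\gr_n X\to$ and $\pi_{n-1}$ is the canonical projection. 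The relation $d^2=0$ is immediate from the triangle axiom $\delta_{n-1}\circ\pi_{n-1}=0$. On morphisms, I set $\beta(f)^{-n}=(\gr_n f)[-n]$; this is visibly functorial and additive, and the chain-map condition follows from naturality of the connecting morphisms applied to the map of filtration triangles induced by $f$. By construction $\beta(X)\in\cM^\win$, and a simple $L\in\cM$ of weight $n$ goes to the one-term complex $L[-n]$ in position $-n$, which is the simple of $\cM^\win$ of weight $n$; thus $\beta$ preserves weights.

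For exactness, given $0\to X'\to X\to X''\to 0$ in $\cM$, apply $\gr_n$ to obtain a short exact sequence of pure objects of weight $n$, which splits by \eqref{eqn:ab-mixed}. Hence $0\to\beta(X')\to\beta(X)\to\beta(X'')\to 0$ is termwise split in $\Pure(\cD)$, yielding a distinguished triangle in $\Kb\Pure(\cD)$; its image in the heart $\cM^\win$ is the desired short exact sequence. Exactness and weight-preservation together force $\beta(X_{\le n})=\beta(X)_{\le n}$ (since $\beta(X_{\le n})\hookrightarrow\beta(X)$ has weights $\le n$ and quotient $\beta(X_{\ge n+1})$ of weights $\ge n+1$), whence $\gr_n\beta(X)\cong\beta(\gr_n X)$ naturally. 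The restriction $\beta\colon \cM_n\to\cM^\win_n$ is automatically an equivalence: both categories are semisimple with simples matched by $\beta$, and by \eqref{eqn:mixedsplit} the endomorphism rings are $\F$, on which $\beta$ acts as the identity. Faithfulness globally then follows: if $\beta(f)=0$, then under the natural identification $\beta(\gr_n f)\cong\gr_n\beta(f)=0$, so $\gr_n f=0$ for every $n$ by faithfulness of $\beta$ on $\cM_n$, forcing $f=0$.

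For the equivalence on $\cM_{[n,n+1]}$, I use \eqref{eqn:ext1-real}. Essential surjectivity: any $Z\in\cM^\win_{[n,n+1]}$ is isomorphic to a two-term complex $A[-n-1]\to B[-n]$ with $A,B$ semisimple of weights $n+1, n$, whose differential represents a class in $\Hom^1_\cD(A,B)\cong\Ext^1_\cM(A,B)$; this lifts to an extension $0\to B\to X\to A\to 0$ in $\cM_{[n,n+1]}$ with $\beta(X)\cong Z$. Fullness: a chain map $\beta(X)\to\beta(Y)$ is a pair $(\phi_n,\phi_{n+1})$ satisfying $\phi_n[1]\circ\alpha_X=\alpha_Y\circ\phi_{n+1}$ in $\Hom^1_\cD$, where $\alpha_X,\alpha_Y$ are the extension classes of $X,Y$; transporting through \eqref{eqn:ext1-real}, this is exactly the standard obstruction for lifting $(\phi_n,\phi_{n+1})$ to a morphism $f\colon X\to Y$ in $\cM_{[n,n+1]}$, and any such $f$ satisfies $\beta(f)=(\phi_n,\phi_{n+1})$. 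Faithfulness is the global statement above.

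The main technical obstacle is the construction of $\beta$: one must identify the connecting morphisms of weight-filtration triangles in $\cD$ with canonical extension classes and verify via the triangle axioms and naturality that they assemble into honest chain complexes and chain maps. Once this is done, the key structural observation driving everything else is the commutation $\gr_n\circ\beta=\beta\circ\gr_n$, which reduces faithfulness and the equivalence on $\cM_n$ to bookkeeping, while \eqref{eqn:ext1-real} handles the one extra extension degree present in $\cM_{[n,n+1]}$.
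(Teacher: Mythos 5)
Your construction of $\beta$ is the same as the paper's (terms $\gr_{-i}X[i]$, differential given by the connecting morphism of the weight filtration, $\beta(f)^i=\gr_{-i}f[i]$, and \eqref{eqn:ext1-real} to handle the extra extension degree on $\cM_{[n,n+1]}$), and your argument is correct; it differs from the paper in two verifications. First, you obtain $d^2=0$ by writing the differential as $\pi_{n-1}[1]\circ\delta_n$, factored through $X_{\le n-1}[1]$, so that the composite of two differentials contains $\delta_{n-1}\circ\pi_{n-1}$, which vanishes because these are consecutive maps of a distinguished triangle; the paper instead defines the differential via the two-step triangle \eqref{eqn:funct-triangle} and proves $d^2=0$ with an octahedral argument plus the splitting of a triangle. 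The two differentials agree by naturality of the connecting morphism applied to the map of short exact sequences $(X_{\le n-1}\subset X_{\le n})\to(\gr_{n-1}X\subset X_{[n-1,n]})$, so your shortcut is legitimate and genuinely simpler. Second, for fullness on $\cM_{[n,n+1]}$ you use the classical criterion that a pair $(\phi_n,\phi_{n+1})$ of maps on sub and quotient lifts to a map of extensions if and only if $(\phi_n)_*\alpha_X=(\phi_{n+1})^*\alpha_Y$ in $\Ext^1_\cM$, transported through \eqref{eqn:ext1-real} (whose compatibility with push-forward and pull-back is needed here and holds since it is a morphism of $\delta$-functors in both variables); the paper instead compares two long exact sequences and deduces the middle isomorphism, which avoids invoking the lifting criterion. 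Two small points you take for granted: that every object of $\cM^\win_{[n,n+1]}$ is represented by a two-term complex concentrated in degrees $-n-1,-n$ (the paper cites \cite[Corollary~5.5]{ar} for this) and that $\gr_n$ is exact on $\cM$ (also used, with attribution, by the paper); neither is a gap, but both deserve a citation or a line of proof.
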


\begin{rmk}\label{rmk:winnow-add}
For $\cF,\cG \in \cM$, the natural map
\[
\beta: \Hom(\cF,\cG) \to \Hom(\beta(\cF), \beta(\cG))
\]
is injective, but not, in general, surjective.  Indeed, if $\cM$ contains any nonsplit exact  short exact sequence $0 \to L' \to X \to L \to 0$ with $\wt(L') \ne \wt(L) - 1$, then~\eqref{eqn:orlov-mixed} (together with~\eqref{eqn:ext1-real}) implies that
\[
0 \to \beta(L') \to \beta(X) \to \beta(L) \to 0
\]
must split.  A morphism $\beta(L) \to \beta(X)$ or $\beta(X) \to \beta(L')$ that splits this sequence cannot come from a morphism in $\cM$: this is the sense in which the winnowing procedure ``adds new morphisms,'' as noted in Section~\ref{sect:intro}.
\end{rmk}

\begin{proof}
The construction we will give is essentially the same as that in the proof of~\cite[Proposition~5.9]{ar}.
	Let $\beta: \cM\to\cM^\win$ be the functor sending $X\in \cM$
	to the complex $(X^\bullet, \delta)$ with $X^i = \gr_{-i}X[i]$, and with differential $\delta^i: X^i \to X^{i+1}$ given by the third morphism in the functorial distinguished triangle
\begin{equation}\label{eqn:funct-triangle}
	\gr_{-i-1}X[i] \to (X_{[-i-1,-i]})[i] \to \gr_{-i}X[i] \overset{\delta^i}{\to}
		\gr_{-i-1}X[i+1]. 
\end{equation}
Here, and below, we write $X_{[n,m]}$ for $(X_{\ge n})_{\le m}$.
	To check that $\delta^{i+1} \circ \delta^i = 0$, consider the compositions
\[
    \xymatrix@R=0pt{
      &\gr_{-i-1} X\ar@{^(->}[dr] \\
      X_{[-i-2,-i-1]} \ar@{->>}[ur] \ar@{^(->}[dr]&  & X_{[-i-1,-i]}\\
      &X_{[-i-2,-i]} \ar@{->>}[ur]}
\]
Let $Y$ be the cone of the map $X_{[-i-2,-i-1]} \to X_{[-i-1,-i]}$ of the above diagram.  The octahedral diagram associated to each separate composition yields the distinguished triangles
   \begin{gather*}
   \gr_{-i-2}X[1] \to Y \to \gr_{-i}X \overset{q}\to \gr_{-i-2}X[2] \\
   \gr_{-i}X \to Y \to \gr_{-i-2}X[1] \to \gr_{-i}X[1].
   \end{gather*}
The second triangle obviously splits, since $\Hom_\cM(\gr_{-i-2}X, \gr_{-i}X) = 0$.  But then the first triangle must also split, so $q = 0$ and thus
 $\delta^{i+1} \circ \delta^i =q[i] = 0$. 

The definition of $\beta$ on morphisms is the same:  for $f: X\to Y$, the morphism $\beta(f)$ is the morphism of complexes such that $\beta(f)^i = \gr_{-i}f[i]$. That we in fact obtain a morphism of complexes follows from the fact that the construction of $\beta(f)$ determines morphisms of triangles of the form \eqref{eqn:funct-triangle}.  As explained in the proof of~\cite[Proposition~5.9]{ar}, the exactness of $\beta$ can be deduced from the exactness of $\gr_{-i}$ combined with~\cite[Lemma~2.5]{ar}.

For $X, Y \in \cM$, there are no nonzero maps $\gr_{-i}X[i] \to \gr_{-i+1}Y[i-1]$, and so no nontrivial homotopies between maps $\beta(X) \to \beta(Y)$.  Thus, for $f: X \to Y$, if $\beta(f) = 0$, it must be that $\gr_{-i}f = 0$ for all $i$, and hence that $f = 0$.  This shows that $\beta$ is faithful.

By~\cite[Corollary~5.5]{ar}, every object in $\cM^\win_{[n,n+1]}$ can be represented by a chain complex $(X^\bullet, d_X)$ where $X^i$ vanishes unless $i = -n,-n-1$.  The differential $\delta^{-n-1} : X^{-n-1} \to X^{-n}$ gives rise to an element
\begin{multline*}
e = \delta^{-n-1}[n+1]\in
\Hom_{\cD}(X^{-n-1}[n+1], X^{-n}[n+1]) \\
\cong \Hom^1_\cD(X^{-n-1}[n+1], X^{-n}[n]) \cong \Ext^1_\cM(X^{-n-1}[n+1], X^{-n}[n]),
\end{multline*}
where the last isomorphism comes from~\eqref{eqn:ext1-real}. The construction of $\cM^\win$ in \cite{ar} implies that $X^{-i}[i]\in\cM$ (in fact in $\cM_i$) for all $i$. Let $\tilde X$ be the middle term of the short exact sequence in $\cM$ determined by $e$.  Tracing through the above construction, one finds that $\beta(\tilde X) \cong (X^\bullet,\delta)$.  Thus, $\beta: \cM_{[n,n+1]} \to \cM^\win_{[n,n+1]}$ is essentially surjective.

To complete the proof, we have left to show that $\beta$ is full on $\cM_{[n,n+1]}$. For pure objects $X_n$ and $Y_n$ of weight $n$, the functor $\beta$ is essentially the identity and clearly gives an isomorphism $\Hom_\cM(X_n,Y_n)\simeq \Hom_{\cM^\win}(\beta(X_n),\beta(Y_n))$. If $Y$ is an arbitrary object in $\cM_{[n,n+1]}$, then $\Hom_{\cM}(X_n,Y)\simeq \Hom_{\cM}(X_n,\gr_nY)$ since $\Hom_{\cM}(X_n,\gr_{n+1}Y)=0$.  When $X_{n+1}$ is a pure object of weight $n+1$ and $Y$ is again an arbitrary object in $\cM_{[n,n+1]}$, there is an exact sequence
\begin{multline*}
    0\to \Hom_{\cM}(X_{n+1},Y) \to \Hom_{\cM}(X_{n+1}, \gr_{n+1} Y) \\
    \to \Ext^1_{\cM}(X_{n+1},\gr_n Y) \to \Ext^1_{\cM}(X_{n+1},Y)\to 0.
\end{multline*}
Let us compare this to the corresponding sequence in $\cM^\win$.
Since $\beta$ is essentially surjective on $\cM_{[n,n+1]}$, we know in particular that every extension of $\beta X_{n+1}$ by $\beta \gr_n Y$ occurs in the essential image of $\beta$.  Thus, the map $\beta: \Ext^1_{\cM}(X_{n+1},\gr_n Y)\to\Ext^1_{\cM^\win}(\beta X_{n+1},\beta\gr_n Y)$ is surjective.  The canonicity of the differential in the definition of $\beta$ implies that it is an injection as well.  That $\beta$ is an isomorphism on the middle two terms in the sequence above implies it must be an isomorphism on the outer terms.  Finally, for $X$ and $Y$ both arbitrary in $\cM_{[n,n+1]}$, we have an exact sequence
\begin{multline*}
    0\to \Hom_\cM(\gr_{n+1}X,Y) \to \Hom_\cM(X, Y) \\ \to \Hom_\cM(\gr_n X,Y) \to \Ext^1_\cM(\gr_{n+1} X,Y)\to \ldots
\end{multline*}
From above, $\beta$ is an isomorphism on $\Hom_\cM(\gr_{n+1}X,Y)$, $\Hom_\cM(\gr_n X,Y)$, and $\Ext^1_\cM(\gr_{n+1} X,Y)$.  Therefore, it must be an isomorphism on $\Hom_\cM(X,Y)$.  
\end{proof}

\section{Preliminaries on perverse sheaves and mixed Hodge modules}
\label{sect:prelim}

Fix, once and for all, a field $\F \subset \R$.  This will be the coefficient field for all constructible sheaves and mixed Hodge modules.  Let $X$ be a smooth variety over $\C$ that is endowed with a fixed algebraic stratification $\cS = \{X_s\}_{s \in S}$.  We write
$j_s: X_s \to X$ for the inclusion of $X_s$ into $X$.  Assume that each stratum $X_s$ is isomorphic to an affine space: $X_s \cong \C^{\dim  X_s}$. (Here, and throughout the paper, the notation ``$\dim$'' when applied to varieties will always denote complex dimension.)  We will impose a stronger condition on the stratification in Section~\ref{subsect:aff-even} below.

\subsection{Perverse $\F$-sheaves}

Let $\dsp(X)$ denote the triangulated category of bounded complexes of $\F$-sheaves on $X$ in the analytic topology that are constructible with respect to $\cS$.  (This category is usually called $\Db_{\mathrm{c}}(X)$ or $\Db_{\mathrm{c},\cS}(X)$, but we use $\dsp(X)$ to forestall confusion with the case of mixed Hodge modules below.)  Let $\Pervs(X) \subset \dsp(X)$ denote the abelian category of perverse $\F$-sheaves that are constructible with respect to $\cS$.  The simple objects in $\Pervs(X)$ are those of the form
\[
\IC_s = j_{s!*}((\text{constant sheaf with value $\F$ on $X_s$})[\dim  X_s]).
\]
The assumption that each $X_s$ is an affine space implies that the realization functor
\begin{equation}\label{eqn:dsp-equiv}
\Db\Pervs(X) \simto \dsp(X).
\end{equation}
is an equivalence of categories~\cite[Corollary~3.3.2]{bgs}.

\subsection{Mixed Hodge $\F$-modules}
\label{subsect:mixedhodge}

Let $\MHM(X)$ denote the category of mixed Hodge $\F$-modules on $X$, and consider its derived category $\Db\MHM(X)$.  Recall that every mixed Hodge module $\cF$ comes with a \emph{weight filtration} as part of its definition, as well as with an underlying perverse sheaf, denoted $\exs(\cF)$.  There are functors obeying the formalism of Grothendieck's ``six operations''~\cite[Theorem~0.1]{smhm}, and their behavior with respect to weights~\cite[p.~225]{smhm} resembles that of mixed $\ell$-adic perverse sheaves.    We refer the reader to \cite{smhm} for a complete introduction to and proofs of facts concerning mixed Hodge modules.

For any smooth complex quasiprojective variety $V$, we write $\uF_V$, or simply $\uF$ if there is no confusion, for the trivial (polarizable) Hodge $\F$-module on $V$.  (We will henceforth omit the word ``polarizable''; all pure Hodge modules or Hodge structures should implicitly be assumed to be polarizable.)  This is a simple object in $\MHM(X)$ of weight $\dim  X$, and its underlying perverse sheaf is a shift (by $\dim  X$) of a constant sheaf.  More generally, for each stratum $X_s$, there is, up to isomorphism, a unique simple object
\[
\MC_s \in \MHM(X)
\qquad\text{such that}\qquad
j_s^*\MC_s \cong \uF_{X_s}.
\]
This object has weight $\dim  X_s$, and its underlying perverse sheaf is $\IC_s$.  Let
\[
\MHMs(X) \subset \MHM(X)
\qquad\text{resp.}\qquad
\dsm(X) \subset \Db\MHM(X)
\]
be the Serre subcategory (resp.~full triangulated subcategory) generated by objects of the form $\MC_s(n)$.  (Here, $\cF \mapsto \cF(1)$ is the Tate twist; $\MC_s(n)$ is a simple object of weight $\dim  X_s - 2n$.)  Note that even on a point endowed with the trivial stratification $\cT$, the category $\MHM_\cT(\pt)$ contains far fewer simple objects than $\MHM(\pt)$. For instance, there exist simple objects of odd weight in $\MHM(\pt)$, whose underlying vector spaces necessarily have dimension greater than $1$.  

The category $\dsm(X)$ can also be described as the full subcategory of $\Db\MHM(X)$ consisting of complexes $\cF$ each of whose cohomology objects $H^i(\cF)$ lie in $\MHMs(X)$.  It follows from the definition of $\dsm(X)$ that for $\cF,\cG \in \MHMs(X)$, we have
\begin{equation}\label{eqn:dsm-hom}
\Hom^i_{\dsm(X)}(\cF,\cG) = \Ext_{\MHM(X)}^i(\cF,\cG).
\end{equation}
The category $\MHMs(X)$ is the heart of the $t$-structure on $\dsm(X)$ induced by the standard $t$-structure on $\Db\MHM(X)$.  (We will simply call it ``the standard $t$-structure on $\dsm(X)$.'')  Together, $\MHMs(X)$ and $\dsm(X)$ form a mixed triangulated category in the sense of Section~\ref{subsect:mixed-kos}.  For brevity, the category of~\eqref{eqn:pure-defn} will usually be denoted
\[
\Pure(X) = \Pure(\dsm(X)).
\]
Finally, the underlying-perverse-sheaf functor $\cF \mapsto \exs(\cF)$ preserves the categories we have defined associated to the stratification $\cS$, so it induces functors
\[
\exs: \MHMs(X) \to \Pervs(X)
\qquad\text{and}\qquad
\exs: \dsm(X) \to \dsp(X).
\]
This functor is $t$-exact, and an object $\cF \in \dsm(X)$ belongs to $\MHMs(X)$ if and only if $\exs(\cF) \in \Pervs(X)$.

\subsection{Hom-groups}

Let $a: X \to \pt$ denote the constant map.  Given $\cF,\cG \in \dsm(X)$ and $i \in \Z$, we define a mixed Hodge structure $\uHom^i(\cF,\cG)$ by
\[
\uHom^i(\cF,\cG) = H^i(a_*\mathscr{H}\mathit{om}(\cF,\cG)).
\]
In other words, $\uHom^i(\cF,\cG)$ is an object of $\MHM(\pt)$ that (because $\exs$ commutes with all sheaf functors) is equipped with a natural isomorphism
\[
\exs\uHom^i(\cF,\cG) \cong \Hom^i(\exs\cF,\exs\cG).
\]
The following natural short exact sequence expresses the relationship between $\Hom$-groups in $\dsm(X)$ and those in $\dsp(X)$~\cite[Theorem~2.10]{semhm}:
\begin{equation}\label{eqn:hodge-ses}
0 \to \HH^1(\uHom^{i-1}(\cF,\cG)) \to \Hom^i(\cF,\cG) \to \HH^0(\uHom^i(\cF,\cG)) \to 0.
\end{equation}
Here, the functor $\HH^j = \Ext^j_{\MHM(\pt)}(\uF_{\pt},{-}): \MHM(\pt) \to \Vect_\F$ is the Hodge cohomology functor.  All $\Hom$-groups in $\dsp(X)$ are finite-dimensional, but it should be noted that $\Hom$-groups in $\dsm(X)$ (or even in $\Db\MHM(\pt)$) may have infinite dimension. For example, we have $\Ext^1_{\MHM(\pt)}(\uF, \uF(n)) \cong \C/(2\pi i)^n\F$, which is only finite-dimensional if $\F=\R$.

\subsection{Affine even stratifications}
\label{subsect:aff-even}

The main results of this paper hold for stratified varieties that satisfy the following conditions:

\begin{defn}\label{defn:aff-even}
A stratification $\cS = \{X_s\}_{s \in S}$ of a variety $X$ is called an \emph{affine even stratification} if the following two conditions hold:
\begin{enumerate}
\item Each $X_s$ is isomorphic to the affine space $\C^{\dim  X_s}$.
\item For all $s, t \in S$ and $i \in \Z$, the mixed Hodge module $H^i(j_t^*\MC_s)$ vanishes if $i \not \equiv \dim  X_s - \dim  X_t \pmod 2$, and is isomorphic to a direct sum of copies of $\uF_{X_t}((\dim  X_t -\dim  X_s-i)/2)$ otherwise.\label{it:parity}
\end{enumerate}
\end{defn}
\noindent
Note in particular that condition~\eqref{it:parity} above implies that $j_t^*\MC_s$ is pure, and hence semisimple.

\begin{lem}\label{lem:aff-even-crit}
Suppose that $X$ has a stratification $\cS = \{X_s\}_{s \in S}$ by affine spaces.  Assume that for each stratum $X_s \subset X$, there is a proper morphism $\pi_s: Y_s \to \overline{X_s}$ such that the following conditions hold:
\begin{enumerate}
\item $Y_s$ is smooth.
\item The restriction $\pi_s: \pi_s^{-1}(X_s) \to X_s$ is an isomorphism.\label{it:ressing}
\item For any $X_t \subset \overline{X_s}$, the projection $\pi_s: \pi_s^{-1}(X_t) \to X_t$ is a trivial fibration, and $\pi_s^{-1}(X_t)$ has an affine paving.
\end{enumerate}
Then $\cS$ is an affine even stratification.
\end{lem}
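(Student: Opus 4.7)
The strategy is to exhibit $\MC_s$ as a direct summand of $(\pi_s)_*\uF_{Y_s}$, then to use proper base change together with the trivial fibration $\pi_s^{-1}(X_t) \cong X_t \times F_t$ to reduce the calculation of $j_t^*\MC_s$ to the cohomology of the fiber $F_t$, whose Hodge structure is controlled by the affine paving.

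First, since $\pi_s$ restricts to an isomorphism over $X_s$, I would (possibly after replacing $Y_s$ by the closure of $\pi_s^{-1}(X_s)$) assume $\dim Y_s = \dim X_s$, so that $\uF_{Y_s}$ is a simple pure Hodge module of weight $\dim X_s$ on the smooth variety $Y_s$. Saito's decomposition theorem for the proper morphism $\pi_s$ then decomposes $(\pi_s)_*\uF_{Y_s}$ into shifts of simple pure Hodge modules, and the unique summand whose support is all of $\overline{X_s}$ must be $\MC_s$, placed in cohomological degree $0$. Next, by proper base change, $j_t^*(\pi_s)_*\uF_{Y_s} \cong f_* i_t^*\uF_{Y_s}$, where $i_t: \pi_s^{-1}(X_t) \hookrightarrow Y_s$ and $f: \pi_s^{-1}(X_t) \to X_t$. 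Using the trivial fibration identification and a Künneth-type computation---tracking carefully the perverse normalization shift $[\dim X_t]$ between $\uF_{X_t}$ and the constant sheaf $\F_{X_t}$---I would obtain
\[
f_* i_t^*\uF_{Y_s} \cong \uF_{X_t} \otimes R\Gamma(F_t, \F)\,[\dim X_s - \dim X_t]
\]
in $\Db\MHM(X_t)$, where $R\Gamma(F_t, \F)$ is viewed as a complex of mixed Hodge structures pulled back from $\pt$.

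To finish, I would show that $R\Gamma(F_t, \F) \cong \bigoplus_k \F(-k)^{n_k}[-2k]$ in $\Db\MHM(\pt)$, where $n_k$ counts the cells of complex dimension $k$ in the affine paving of $F_t$. My plan is to prove this by induction on the number of cells: if $Z \subset F_t$ is a closed piece of the paving whose complement $U$ is a disjoint union of affine cells, the excision triangle
\[
R\Gamma_c(U,\F) \to R\Gamma(F_t,\F) \to R\Gamma(Z,\F) \to R\Gamma_c(U,\F)[1]
\]
combined with $R\Gamma_c(\C^d,\F) = \F(-d)[-2d]$ and the inductive hypothesis expresses $R\Gamma(F_t,\F)$ as a cone of a connecting morphism whose components lie in $\Ext^{2a-2b+1}_{\MHM(\pt)}(\F(-a),\F(-b))$. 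These Ext groups vanish: for $a \ne b$ by a parity argument on degrees, and for $a = b$ by $\Ext^1_{\MHM(\pt)}(\F,\F) = 0$. Substituting the resulting splitting into the displayed isomorphism and taking perverse cohomology $H^i$ recovers exactly the required direct sums of $\uF_{X_t}((\dim X_t - \dim X_s - i)/2)$, and the conclusion for $j_t^*\MC_s$ follows because it is a direct summand of a semisimple complex of this form.

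The principal obstacle will be the formality statement $R\Gamma(F_t,\F) \cong \bigoplus_k \F(-k)^{n_k}[-2k]$ in $\Db\MHM(\pt)$: while the parity of the cohomology and the Tate type of each graded piece are standard consequences of an affine paving, actually splitting the complex in the derived category requires the Ext-vanishing check sketched above. The remaining bookkeeping around perverse normalization, Tate twists, and identification of the correct direct summand of $(\pi_s)_*\uF_{Y_s}$ is routine provided one is consistent with conventions.
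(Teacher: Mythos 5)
Your proposal hits the same first two milestones as the paper ($\MC_s$ is a direct summand of $\pi_{s*}\uF_{Y_s}$ by purity/decomposition for the proper map $\pi_s$, and base change identifies $j_t^*\pi_{s*}\uF_{Y_s}$ with the pushforward from $\pi_s^{-1}(X_t)$), but the key step is genuinely different. The paper does not use any K\"unneth decomposition or formality of the fiber: it uses the trivial fibration only to know that the underlying perverse sheaves of the cohomology objects are constant, and then argues inside the category $\MHM_{\const}(X_t)$ --- purity of $j_t^*\pi_{s*}\uF_{Y_s}$ is forced by the purity of its pushforward to a point (computed by induction on the affine paving of $\pi_s^{-1}(X_t)$), and any simple constant summand not of the form $\uF_{X_t}(n)$ is detected after applying $a_*[-\dim X_t]$ as a summand with no subobject $\uF_\pt(n)$, contradicting that same computation. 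Your route instead exploits the algebraic product structure $\pi_s^{-1}(X_t)\cong X_t\times F_t$ to factor the pushforward as $\uF_{X_t}\otimes R\Gamma(F_t,\F)[\dim X_s-\dim X_t]$ and then splits $R\Gamma(F_t,\F)$ into Tate summands. This is viable (the exchange/K\"unneth morphisms exist in Saito's formalism and are isomorphisms because $\exs$ is exact and kills no nonzero object, and $\pi_s^{-1}(X_t)\to X_t$ is proper), and it is more explicitly computational; but it consumes strictly more of condition (3) than the paper's argument, which would survive with only topological triviality of the fibration.

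Two steps need patching as written. First, condition (3) supplies an affine paving of $\pi_s^{-1}(X_t)$, not of the fiber $F_t$, whereas your induction is run on a paving of $F_t$. This is repairable: since $X_t\cong\C^{\dim X_t}$ has trivial cohomology, K\"unneth gives $R\Gamma(F_t,\F)\cong R\Gamma(\pi_s^{-1}(X_t),\F)$ in $\Db\MHM(\pt)$, so you may run the excision/splitting induction on the given paving of the preimage instead (this is essentially the computation the paper performs). Second, the vanishing of the components $\Ext^{2a-2b+1}_{\MHM(\pt)}(\F(-a),\F(-b))$ of the connecting map is not a parity statement alone: for $a<b$ the degree is negative, and for $a=b$ you correctly use $\Ext^1_{\MHM(\pt)}(\F,\F)=0$, but for $a>b$ the degree is odd and at least $3$, and you must invoke the fact that $\MHM(\pt)$ (graded-polarizable mixed Hodge structures) has cohomological dimension one, so $\Ext^{\ge 2}_{\MHM(\pt)}=0$. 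With that input the splitting goes through; note that the genuinely nonsplit classes $\Ext^1_{\MHM(\pt)}(\uF,\uF(n))\ne 0$ for $n\ge 1$, which the paper emphasizes elsewhere, never occur as components here, since a component whose Tate twists differ by $n$ sits in Ext-degree $2n+1$. Finally, a small caution: replacing $Y_s$ by the closure of $\pi_s^{-1}(X_s)$ may destroy smoothness, so it is better simply to take $\dim Y_s=\dim X_s$ as implicit in the setup (as the paper does) or to argue component by component.
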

\begin{proof}
The assumptions imply that $\pi_s$ is surjective. Since $\pi_s$ is proper, the object $\pi_{s*}\uF_{Y_s}$ is pure (of weight $\dim Y_s = \dim  X_s$) and therefore semisimple.  It is clear from condition~\eqref{it:ressing} above that $\MC_s$ occurs as a direct summand of $\pi_{s*}\uF_{Y_s}$.  Now, choose a stratum $X_t \subset \overline{X_s}$, and let $Z = \pi_s^{-1}(X_t)$.  To prove condition~\eqref{it:parity} of Definition~\ref{defn:aff-even}, it suffices to prove the following claim: The object $\cF = j_t^*\pi_{s*}\uF_{Y_s} \cong \pi_{s*} \uF_Z[\dim  X_s - \dim Z]$ has the property that
\begin{equation}\label{eqn:affeven-cohom}
H^i(\cF) \cong
\left\{
\begin{array}{@{}l}
\text{$0$ \quad if $i \not\equiv \dim  X_s - \dim  X_t \pmod 2$,} \\
\text{a direct sum of copies of $\uF_{X_t}(\frac{\dim  X_t - \dim  X_s - i}{2})$ otherwise.}
\end{array}
\right.
\end{equation}

To prove this claim, consider the full subcategory $\MHM_\const(X_t) \subset \MHM(X_t)$ consisting of objects whose underlying perverse sheaf is constant.  Let $\Db_{\cS,\const}(X_t)$ be the corresponding full triangulated subcategory of $\Db\MHM(X_t)$, and let $a: X_t \to \pt$ be the constant map.  Considering the cohomology of a constant sheaf on $X_t$, one sees that the functor $a_*[-\dim  X_t]: \Db_{\cS,\const}(X_t) \to \Db\MHM(\pt)$ is $t$-exact, preserves weights, and kills no nonzero object of $\MHM_\const(X_t)$.  Now, let $\cG \in \MHM_\const(X_t)$ be a simple object that is \emph{not} isomorphic to $\uF_{X_t}(n)$ for any $n$.  Since $\uF_{X_t}(n) \cong a^*\uF_\pt[\dim  X_t](n)$, we have
\[
\Hom_{\MHM(X_t)}(\uF_{X_t}(n), \cG) \cong \Hom_{\MHM(\pt)}(\uF_\pt(n), a_*\cG[-\dim  X_t]) = 0.
\]
That is, $a_*\cG[-\dim  X_t]$ is a nonzero object of $\MHM(\pt)$ containing no subobject isomorphic to $ \uF_\pt(n)$.

Since $\pi_t$ is a trivial fibration, the underlying perverse sheaves of the $H^i(\cF)$ are constant, so $\cF \in \Db_{\cS,\const}(X_t)$.  Since $\pi_s: Z \to X_t$ is proper and $\uF_Z[\dim  X_s - \dim Z]$ has weights${}\le \dim  X_s$, it follows that $\cF$ has weights${}\le \dim  X_s$ as well.  In fact, $\cF$ must be pure of weight $\dim  X_s$: if not, $a_*\cF$ would not be pure either, but an easy induction on the number of cells in the affine paving of $Z$ shows that
\begin{equation}\label{eqn:haf-calc}
H^i(a_*\pi_{s*}\uF_Z) \cong
\left\{
\begin{array}{@{}l}
\text{$0$ \quad if $i \not\equiv \dim Z \pmod 2$,} \\
\text{a direct sum of copies of $\uF_{\pt}(\frac{-\dim Z - i}{2})$ otherwise.}
\end{array}
\right.
\end{equation}
Since $\cF$ is pure, it is semisimple.  If $\cF$ did not have the property~\eqref{eqn:affeven-cohom}, then it would contain a simple summand $\cG$ as in the preceding paragraph, and then $a_*\cF$ would contain a summand containing no subobject isomorphic to $\uF_\pt(n)$,  contradicting~\eqref{eqn:haf-calc}.  Thus, the claim~\eqref{eqn:affeven-cohom} holds, as desired.
\end{proof}

For another perspective on the preceding argument, one can use the rigidity theorem for variations of Hodge structures~\cite[Theorem~7.24]{schmid} to show that $a_*[-\dim X_t]: \Db_{\cS,\const}(X_t) \to \Db\MHM(\pt)$ is actually an equivalence of categories.  This makes it clear that conditions~\eqref{eqn:affeven-cohom} and~\eqref{eqn:haf-calc} are equivalent.

\begin{cor}
The stratification of any partial flag variety of a reductive algebraic group by Schubert cells is an affine even stratification.
\end{cor}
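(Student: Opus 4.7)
The plan is to apply Lemma~\ref{lem:aff-even-crit}, using Bott--Samelson varieties as the resolutions $\pi_s$. Fix a partial flag variety $G/P$, stratified by Schubert cells $X_w = BwP/P$ indexed by $w \in W^P$, the minimal-length coset representatives for $W/W_P$; each $X_w \cong \C^{\ell(w)}$, so one of the two conditions of an affine even stratification is immediate. For each $w$, choose a reduced expression $w = s_{i_1} \cdots s_{i_k}$, and set
\[
Y_w \;=\; P_{i_1} \times^B P_{i_2} \times^B \cdots \times^B P_{i_k}/B,
\]
with $\pi_w : Y_w \to \overline{X_w}$ the composition of the usual multiplication map $Y_w \to G/B$ with the projection $G/B \to G/P$. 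Since $Y_w$ is projective and $G/P$ separated, $\pi_w$ is proper.

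The smoothness of $Y_w$ is standard (it is an iterated $\mathbb{P}^1$-bundle over a point). For condition~(\ref{it:ressing}) of Lemma~\ref{lem:aff-even-crit}, the multiplication map $Y_w \to G/B$ restricts to an isomorphism over $BwB/B$ because the expression is reduced; and minimality of $w$ in $wW_P$ ensures that $BwB/B \to BwP/P = X_w$ is also an isomorphism, so $\pi_w$ is an isomorphism over $X_w$.

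For condition~(3), fix a Schubert cell $X_t \subset \overline{X_w}$ with $t \in W^P$. Let $U_t = U \cap t U^- t^{-1}$; this is a unipotent subgroup isomorphic to $\C^{\ell(t)}$ which acts freely and transitively on $X_t$ via left multiplication. The natural left $B$-action on the first factor of $Y_w$ makes $\pi_w$ equivariant, so $U_t$ acts on $\pi_w^{-1}(X_t)$. Trivialisation then follows formally: fixing the basepoint $tP/P \in X_t$ and letting $F_t = \pi_w^{-1}(tP/P)$, the map
\[
U_t \times F_t \;\longrightarrow\; \pi_w^{-1}(X_t), \qquad (u,y) \mapsto u \cdot y,
\]
is an isomorphism, since each $y' \in \pi_w^{-1}(X_t)$ determines a unique $u \in U_t$ with $u^{-1}\pi_w(y') = tP/P$. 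Identifying $U_t$ with $X_t$ gives $\pi_w^{-1}(X_t) \cong X_t \times F_t$ as a trivial fibration. To get an affine paving of $\pi_w^{-1}(X_t)$, use the standard affine cell decomposition of $Y_w$ indexed by subwords of $(s_{i_1}, \ldots, s_{i_k})$: the preimage of $X_t$ is a disjoint union of those subword cells whose product lies in the coset $tW_P$, each of which is an affine space. (Equivalently, under the trivialisation above, $F_t$ inherits an affine paving.)

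With all three hypotheses verified, Lemma~\ref{lem:aff-even-crit} yields the result. The only non-routine step is the construction of the trivialisation of $\pi_w^{-1}(X_t) \to X_t$; once the $U_t$-equivariance is set up, this is automatic, but it is the one place where the argument goes beyond general facts about Bott--Samelson varieties.
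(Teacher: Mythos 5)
Your overall strategy is exactly the paper's: verify the hypotheses of Lemma~\ref{lem:aff-even-crit} for the Bott--Samelson varieties $Y_w$. The properness, smoothness, the isomorphism over the open cell (using that $w\in W^P$ is minimal in $wW_P$), and the $U_t$-equivariant trivialisation of $\pi_w^{-1}(X_t)\to X_t$ are all fine and standard. But there is a genuine gap at the one point you yourself flag as the crux, namely the affine paving of $\pi_w^{-1}(X_t)$. The ``standard affine cell decomposition of $Y_w$ indexed by subwords'' is \emph{not} compatible with the pullback of the Schubert stratification: a subword cell $C_J$ does not map into a single cell $BuB/B$ (let alone a single coset $uW_P$), so $\pi_w^{-1}(X_t)$ is not a union of subword cells. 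Concretely, for $G=SL_3$ and the reduced word $(s_1,s_2,s_1)$, the cell of the subword $(s_1,\,\text{--}\,,s_1)$, which is a copy of $\C^2$, maps partly into the point cell $\{B/B\}$ and partly into $Bs_1B/B$ (a nontrivial element of $s_1U_{\alpha_1}s_1=U_{-\alpha_1}$ lies in $Bs_1B$, while the identity does not); hence the preimage of $X_e$ meets this cell in a proper closed subvariety. So the phrase ``those subword cells whose product lies in the coset $tW_P$'' does not make sense cell-by-cell, and the parenthetical remark that $F_t$ ``inherits'' an affine paving is circular: the trivialisation reduces the problem to paving $F_t$, but no paving of $F_t$ has been produced.

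This missing step is precisely the nontrivial input for which the paper cites Gaussent's theorem on the fibre of the Bott--Samelson resolution \cite{gau:fbsr} (with the caveat, noted via the erratum \cite{gau:efbsr}, that part of that theorem's statement is false but the paving statements needed here are correct). Gaussent's decomposition of the fibres is indexed by combinatorial galleries, not by subwords, and establishing that it is an affine paving is genuinely more delicate than the subword decomposition of $Y_w$ itself. To repair your write-up, either invoke that result at this point, or supply an actual cell decomposition of $F_t$ (equivalently of $\pi_w^{-1}(X_t)$) adapted to the Schubert cell $X_t$; everything else in your argument can stand as written.
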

\begin{proof}
It follows from~\cite[Theorem~2]{gau:fbsr} that the Bott--Samelson--Demazure resolution of a Schubert variety satisfies the conditions of Lemma~\ref{lem:aff-even-crit}.  (The last sentence of~\cite[Theorem~2]{gau:fbsr} is false, cf.~\cite{gau:efbsr}, but the parts we need are correct.)
\end{proof}

\begin{lem}\label{lem:aff-even-std}
Suppose $X$ has an affine even stratification $\cS = \{X_s\}_{s \in S}$.  For each $s \in S$, the object $\Delta_s = j_{s!}\uF_{X_s}$ lies in $\MHMs(X)$, and if $\cF \in \Db\MHM(X)$ has the property that $j_t^*\cF \in \dsm(X_t)$ for all $t \in S$, then $\cF \in \dsm(X)$. 

Dually, the object $\nabla_s = j_{s*}\uF_{X_s}$ lies in $\MHMs(X)$, and if $j_t^!\cF \in \dsm(X_t)$ for all $t \in S$, then $\cF \in \dsm(X)$.  
\end{lem}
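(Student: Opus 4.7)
By Verdier duality — which exchanges $\Delta_s$ with the Tate twist and shift $\nabla_s(\dim X_s)[2\dim X_s]$, exchanges $j_t^*$ with $j_t^!$, and preserves both $\MHMs(X)$ and $\dsm(X)$ — the two assertions about $\nabla_s$ follow from the two about $\Delta_s$, so I focus on the latter. The plan is to induct on $|S|$. The base case $|S|=1$ forces $X = X_s$, and then $\Delta_s = \uF_X = \MC_s$ and the restriction criterion is vacuous.

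For the inductive step, pick any closed stratum $X_r \in S$ and set $U = X \setminus X_r$, with open inclusion $j_U \colon U \hookrightarrow X$ and closed inclusion $i_r \colon X_r \hookrightarrow X$; the stratification of $U$ is again affine even with $|S|-1$ strata, so the inductive hypothesis applies. I first record two preparatory facts. (i) Each $\Delta_s$ is a MHM, not merely a complex: under the affine even hypothesis $j_s$ is an affine morphism (visible either from the resolution $\pi_s$ of Lemma~\ref{lem:aff-even-crit} or from the affine paving of $\overline{X_s}$), so $j_{s!}$ is $t$-exact for the standard $t$-structure on $\Db\MHM(X)$. (ii) The closed-stratum pushforward $(i_r)_*$ carries $\dsm(X_r)$ into $\dsm(X)$, because it is exact and sends $\uF_{X_r}(n) = \MC_r^{X_r}(n)$ to $\MC_r(n)$.

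Given $\cF$ with $j_t^*\cF \in \dsm(X_t)$ for every $t \in S$, apply the triangle
\[
j_{U!}\, j_U^* \cF \;\longrightarrow\; \cF \;\longrightarrow\; (i_r)_*\, i_r^* \cF \;\stackrel{+1}{\longrightarrow}.
\]
The third term is in $\dsm(X)$ by (ii) and the hypothesis at $t=r$; the object $j_U^*\cF$ satisfies the hypothesis on $U$, so by induction $j_U^*\cF \in \dsm(U)$. Hence $\cF \in \dsm(X)$ will follow once it is shown that $j_{U!}$ sends $\dsm(U)$ into $\dsm(X)$. The claim $\Delta_s \in \MHMs(X)$ then follows quickly: either $s = r$ and $\Delta_r = (i_r)_*\uF_{X_r} = \MC_r$, or $X_s \subset U$ and $\Delta_s = j_{U!}\Delta_s^U$ with $\Delta_s^U \in \MHMs(U)$ by induction; combined with (i) and the preservation of $\dsm$ under $j_{U!}$, this places $\Delta_s$ in $\MHM(X) \cap \dsm(X) = \MHMs(X)$.

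The main obstacle is this preservation statement for $j_{U!}$. By devissage it is enough to show $j_{U!}\MC_t^U(n) \in \dsm(X)$ for each stratum $X_t \subset U$ and each $n$. Here I would use the exact triangle
\[
j_{U!}\MC_t^U(n) \;\longrightarrow\; j_{U*}\MC_t^U(n) \;\longrightarrow\; (i_r)_*\bigl(i_r^* j_{U*}\MC_t^U(n)\bigr) \;\stackrel{+1}{\longrightarrow}
\]
together with the affine even condition, which pins down $H^i(j_r^*\MC_t)$ as a direct sum of copies of $\uF_{X_r}((\dim X_r - \dim X_t - i)/2)$. This controls the Hodge-theoretic nature of the boundary contribution $i_r^* j_{U*}\MC_t^U(n)$, and a parallel argument (or Verdier duality) handles $j_{U*}\MC_t^U(n)$ itself. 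The delicate part, and where the affine even hypothesis does its essential work, is the bookkeeping of Tate twists and weights in this boundary analysis.
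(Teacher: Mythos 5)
Your overall architecture is sound and close to the paper's (the paper also inducts on the number of strata, but peels off an \emph{open} stratum $X_u$ and uses the triangles $\Delta_u \to \MC_u \to i_*i^*\MC_u$ and $j_{u!}j_u^!\cF \to \cF \to i_*i^*\cF$, whereas you peel off a closed one), but the step you yourself flag as ``the main obstacle'' --- that $j_{U!}$ carries $\dsm(U)$ into $\dsm(X)$ --- is exactly where the sketch has a genuine gap. In your triangle
\[
j_{U!}(\MC_t|_U)(n) \to j_{U*}(\MC_t|_U)(n) \to (i_r)_*\bigl(i_r^*j_{U*}(\MC_t|_U)(n)\bigr) \to
\]
you need the middle term to lie in $\dsm(X)$, and that statement is Verdier-dual to the claim being proved (since $\mathbb{D}\, j_{U*} \cong j_{U!}\,\mathbb{D}$ and the simple generators are self-dual up to Tate twist and shift), so ``a parallel argument (or Verdier duality)'' is circular as written. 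In addition, condition~(2) of Definition~\ref{defn:aff-even} controls $j_r^*\MC_t$, \emph{not} the boundary term $i_r^*j_{U*}(\MC_t|_U)$ occurring in your triangle; relating the two requires the other recollement triangle together with a duality argument identifying $i_r^!\MC_t$ as Tate-type, none of which is supplied.

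The repair is short, and it is essentially the paper's mechanism transported to your decomposition: apply the recollement triangle to $\MC_t$ itself rather than to $j_{U*}(\MC_t|_U)$, i.e.\ use
\[
j_{U!}(\MC_t|_U)(n) \to \MC_t(n) \to (i_r)_*\, j_r^*\MC_t(n) \to .
\]
Here the middle term is a generator of $\dsm(X)$, and the third term lies in $\dsm(X)$ because $j_r^*\MC_t \in \dsm(X_r)$ by the affine even condition and $(i_r)_*$ sends $\dsm(X_r)$ into $\dsm(X)$ (your point~(ii)); no induction and no control of $j_{U*}$ is needed for this step. With this substitution your induction by removing a closed stratum goes through, and is a mild variant of the paper's proof, in which the boundary object $i^*\MC_u$ lives on the closed complement with many strata and is handled by the inductive restriction criterion there. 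One minor correction: the affineness of $j_s$ (used, as in the paper via~\cite[Corollaire~4.1.3]{bbd}, to conclude that $\Delta_s$ is perverse) follows simply from $X_s$ being an affine variety and $X$ separated; it has nothing to do with the resolutions $\pi_s$ or with affine pavings of $\overline{X_s}$.
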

\begin{proof}
We will treat only the first part of the lemma; the second part is similar.  We proceed by induction on the number of strata in $X$.  If $X$ consists of a single stratum, there is nothing to prove.  Otherwise, let $X_u \subset X$ be an open stratum, and let $i: Z \hookrightarrow X$ be the inclusion of its complement.  In $\Db\MHM(X)$, there is a distinguished triangle
\[
\Delta_u \to \MC_u \to i_*i^*\MC_u \to.
\]
It follows from part~\eqref{it:parity} of Definition~\ref{defn:aff-even} that $i^*\MC_u \in \Db\MHM(Z)$ satisfies the assumptions of the lemma.  Since $Z$ has fewer strata, we have by induction that $i^*\MC_u \in \dsm(Z)$.  It follows that $i_*i^*\MC_u \in \dsm(X)$, so $\Delta_u \in \dsm(X)$.  Since the inclusion $j_u: X_u \to X$ is affine, the object $\exs(\Delta_u) \in \dsp(X)$ is a perverse sheaf by~\cite[Corollaire~4.1.3]{bbd}, so $\Delta_u \in \MHMs(X)$.

Now, for general $\cF \in \Db\MHM(X)$, form the distinguished triangle
\[
j_{u!}j_u^!\cF \to \cF \to i_*i^*\cF \to.
\]
The first term lies in the subcategory generated by Tate twists of $\Delta_u$, and the last term again lies in $\dsm(X)$ by induction, so $\cF$ lies in $\dsm(X)$ as well.
\end{proof}

\begin{rmk}\label{rmk:failure}
It is easy to see that $\MHMs(X)$ is never Koszul: there exist simple objects $\cF$ and $\cG$ such that
\begin{equation}\label{eqn:nkos-mhm}
\Ext^1_{\MHMs(X)}(\cF,\cG) \ne 0 \qquad \text{with $\wt(\cG) < \wt(\cF) - 1$.}
\end{equation}
For instance, one can take $\cF = \MC_s$ and $\cG = \MC_s(n)$ for some $n \ge 1$; the claim above follows from~\eqref{eqn:hodge-ses} and the fact that $\Ext^1_{\MHM(\pt)}(\uF, \uF(n)) \cong \C/(2\pi i)^n\F$.

It is likewise true that the category of mixed $\ell$-adic perverse sheaves on a variety over a finite field is never Koszul: one can find simple $\cF$ and $\cG$ such that
\begin{equation}\label{eqn:nkos-perv}
\Ext^1_{\Pervs^{\mathrm{m}}(X;\bar{\mathbb{Q}}_\ell)}(\cF,\cG) \ne 0 \qquad \text{with $\wt(\cG) > \wt(\cF) - 1$.}
\end{equation}
Indeed, by~\cite[(5.1.2.5)]{bbd}, this happens for $\cF = \cG = \IC_s$.  The fact that the failure of Koszulity happens in opposite ways in~\eqref{eqn:nkos-mhm} and~\eqref{eqn:nkos-perv} is what necessitates the use of two different approaches to achieve Koszulity in the two settings.
\end{rmk}

\subsection{$\exs$-projective mixed Hodge modules}

A key property of spaces with affine even stratifications is that the category $\Pervs(X)$ has enough projectives and injectives~\cite[Theorem~3.3.1]{bgs}.  Unfortunately, $\MHMs(X)$ does not have enough projectives or injectives, even on a point.  The following notion serves as a substitute.

\begin{defn}
Suppose $X$ has an affine even stratification $\cS = \{X_s\}_{s \in S}$.  An object $\cF \in \MHMs(X)$ is said to be \emph{$\exs$-projective} (resp.~\emph{$\exs$-injective}) if $\exs(\cF)$ is a projective (resp.~injective) object of $\Pervs(X)$.
\end{defn}

In~\cite{schnuerer}, these objects were called ``perverse-projective'' and ``perverse-injective.''  We emphasize that a $\exs$-projective object in $\MHMs(X)$ is \emph{not} projective.

\begin{prop}[{\cite[Proposition~14 and Corollary~15]{schnuerer}}]\label{prop:pseudoproj}
Suppose $X$ has an affine even stratification $\cS = \{X_s\}_{s \in S}$.  The category $\MHMs(X)$ has enough $\exs$-projective objects, and every object admits a $\exs$-projective resolution of finite length.\qed
\end{prop}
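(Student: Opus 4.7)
This result is attributed to Schnürer~\cite{schnuerer}; the plan is to sketch its proof in the present setting. The argument breaks into two parts: constructing enough $\exs$-projectives in $\MHMs(X)$, and showing that every object admits a $\exs$-projective resolution of finite length. The second part follows once the first is established. Indeed, iterating surjections from $\exs$-projectives produces a (possibly infinite) resolution $\cdots \to P_1 \to P_0 \to \cF \to 0$, and applying the exact functor $\exs$ yields a projective resolution of $\exs(\cF)$ in $\Pervs(X)$. The latter category has finite length with a finite set of simples $\{\IC_s\}_{s\in S}$, and the affine-even hypothesis bounds the $\Ext$-groups between them, so $\Pervs(X)$ has finite global dimension $d$. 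Then the kernel $K_d = \ker(P_{d-1}\to P_{d-2})$ at stage $d$ satisfies $\exs(K_d)$ projective in $\Pervs(X)$, so $K_d$ is itself $\exs$-projective and the resolution terminates.

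For the first part, it suffices to produce, for every simple $\MC_s \in \MHMs(X)$, a $\exs$-projective $P_s$ together with a surjection $P_s \twoheadrightarrow \MC_s$; a general object is then covered by combining these. The strategy is to lift the perverse projective cover. By~\cite[Theorem~3.3.1]{bgs}, $\IC_s \in \Pervs(X)$ admits a projective cover $\hat P_s$ carrying a $\Delta$-filtration whose subquotients are standard perverse sheaves $j_{t!}\F_{X_t}[\dim X_t]$. The corresponding mixed Hodge standards $\Delta_t = j_{t!}\uF_{X_t}$ lie in $\MHMs(X)$ by Lemma~\ref{lem:aff-even-std}, so one attempts to build $P_s$ as a matching iterated extension of suitable Tate twists of the $\Delta_t$'s inside $\MHMs(X)$. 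If this succeeds, then by construction $\exs(P_s) \cong \hat P_s$ is projective, so $P_s$ is $\exs$-projective, and the required surjection onto $\MC_s$ is obtained by lifting $\hat P_s \twoheadrightarrow \IC_s$ via~\eqref{eqn:hodge-ses}.

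The main obstacle is to carry out the iterative extension inside $\MHMs(X)$: at each stage one has a perverse extension class in $\Ext^1_{\Pervs(X)}(\exs(\Delta_t), \exs(Q))$ (defining the next step of the $\Delta$-filtration of $\hat P_s$) and must lift it to $\Ext^1_{\MHMs(X)}(\Delta_t, Q)$. The short exact sequence~\eqref{eqn:hodge-ses} presents the target as an extension of $\HH^0(\uHom^1(\Delta_t,Q))$ by $\HH^1(\uHom^0(\Delta_t,Q))$, so the lift exists precisely when the given perverse class appears as a Hodge-weight-zero class. The affine-even hypothesis, in the spirit of Lemma~\ref{lem:aff-even-crit}, controls the weights of the mixed Hodge structures $\uHom^1(\Delta_t,Q)$ in such a way that every extension class appearing in the perverse construction does originate from a class in $\HH^0$, and hence lifts to $\MHMs(X)$. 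Once this weight-theoretic compatibility is verified, the induction goes through and produces the desired $P_s$.
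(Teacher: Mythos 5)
First, note that the paper does not actually reprove this proposition: it is quoted from \cite[Proposition~14 and Corollary~15]{schnuerer}, with only the remark that the coefficient field plays no role in Schn\"urer's argument. So your sketch must stand on its own, and it has a genuine gap at exactly the point you label ``the main obstacle.'' To lift the $\Delta$-filtration of $\hat P_s$ step by step you need the map $\bigoplus_n \Ext^1_{\MHM(X)}(\Delta_t(n),Q)\to\Ext^1_{\Pervs(X)}(\exs\Delta_t,\exs Q)$ to hit the particular perverse class at each stage; by \eqref{eqn:hodge-ses} this amounts to asking that $\exs\,\uHom^1(\Delta_t,Q)$ be spanned by the images of the Hodge-class spaces $\HH^0(\uHom^1(\Delta_t,Q)(n))$. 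That is automatic when $\uHom^1$ is pure Tate of a single weight, which is what Lemma~\ref{lem:hom-semis} and Corollary~\ref{cor:hh-perv} give when \emph{both} arguments are simple; but here $Q$ is a partially built $\Delta$-filtered object, and $\uHom^1(\Delta_t,Q)$ is in general a genuinely mixed iterated extension of Tate objects whose constituents sit in different weights. For such a mixed Tate structure the rational classes need \emph{not} be spanned by Hodge classes of its twists: if $M$ is a nonsplit extension of $\uF$ by $\uF(1)$ (these exist since $\Ext^1_{\MHM(\pt)}(\uF,\uF(1))\neq 0$), then $\exs M$ is two-dimensional while $\bigoplus_n\HH^0(M(n))$ is one-dimensional. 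So the assertion that ``every extension class appearing in the perverse construction does originate from a class in $\HH^0$'' is precisely the nontrivial content of the proposition, and it is not delivered ``in the spirit of Lemma~\ref{lem:aff-even-crit}'', which is a criterion for a stratification to be affine even and says nothing about extensions between standard or $\Delta$-filtered objects. This weight-theoretic control of the specific $\uHom^1$-groups arising in the induction is where the real work in Schn\"urer's proof lies.

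There is a second, smaller gap: ``a general object is then covered by combining these'' is not formal. The objects $P_s$ are only $\exs$-projective, so $\Hom_{\MHMs(X)}(P_s,{-})$ is \emph{not} exact; the obstruction to lifting $P_s(n)\twoheadrightarrow\MC_s(n)$ along $\cF\twoheadrightarrow\MC_s(n)$ lives in $\Ext^1_{\MHM(X)}(P_s(n),\cF')$, whose $\HH^1(\uHom^0)$-part does not vanish. Hence surjections onto the simple objects do not automatically assemble into a surjection onto an arbitrary $\cF$; an additional inductive argument (e.g.\ over strata, using $j_!$ of $\exs$-projectives on open unions and a correction across the closed complement) is required, and it runs into the same liftability issue as above. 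By contrast, your second half is fine: granting enough $\exs$-projectives, the finite global dimension of $\Pervs(X)$ (from \cite[\S3.2--3.3]{bgs}) shows that the $d$-th syzygy of any iterated $\exs$-projective cover has projective image under $\exs$, so finite-length resolutions follow exactly as you say.
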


There is an analogous fact for $\exs$-injective objects.  Proposition \ref{prop:pseudoproj} is stated in \cite{schnuerer} only for real coefficients, but it is easy to see that the coefficient field plays no role in the proof, and that the result is in fact valid for arbitrary $\F$.

\begin{lem}
If $\cP \in \MHMs(X)$ is $\exs$-projective, then for any $\cG \in \MHMs(X)$, we have $\Hom^i(\cP,\cG) = 0$ for all $i \ge 2$.
\end{lem}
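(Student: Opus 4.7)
The plan is to exploit the short exact sequence~\eqref{eqn:hodge-ses}, which relates Hom-groups in $\dsm(X)$ to Hom-groups in $\dsp(X)$ through the Hodge cohomology of the internal Hom $\uHom^\bullet(\cP,\cG)$. The whole argument amounts to observing that the $\exs$-projectivity hypothesis kills $\uHom^k(\cP,\cG)$ for all $k\ge 1$, after which both outer terms of~\eqref{eqn:hodge-ses} vanish for $i\ge 2$.

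Specifically, first I would recall that $\exs\uHom^k(\cP,\cG)\cong\Hom^k(\exs\cP,\exs\cG)$ in $\dsp(X)$, by the construction of $\uHom^\bullet$ together with the compatibility of $\exs$ with all sheaf functors. Since $\exs\cP$ is projective in $\Pervs(X)$ and $\Db\Pervs(X)\simto\dsp(X)$ by~\eqref{eqn:dsp-equiv}, we have $\Hom^k(\exs\cP,\exs\cG)\cong\Ext^k_{\Pervs(X)}(\exs\cP,\exs\cG)=0$ for every $k\ge 1$. The functor $\exs$ at a point is the forgetful functor from mixed Hodge structures to $\F$-vector spaces, and is therefore faithful; an object of $\MHM(\pt)$ with vanishing underlying vector space is zero. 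Consequently $\uHom^k(\cP,\cG)=0$ for all $k\ge 1$.

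Next I would apply~\eqref{eqn:hodge-ses} with $\cF=\cP$. For $i\ge 2$ both $\uHom^{i-1}(\cP,\cG)$ and $\uHom^i(\cP,\cG)$ vanish by the previous step, so $\HH^0$ and $\HH^1$ of them vanish as well. The sequence then forces $\Hom^i(\cP,\cG)=0$, which by~\eqref{eqn:dsm-hom} is the same as $\Ext^i_{\MHM(X)}(\cP,\cG)=0$.

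There is no substantive obstacle here; the only thing worth checking carefully is the identification $\exs\uHom^k(\cP,\cG)\cong\Hom^k(\exs\cP,\exs\cG)$ (so that $\exs$-projectivity can be brought to bear on the internal Hom), and the faithfulness of the forgetful functor $\MHM(\pt)\to\Vect_\F$, which lets us promote ``underlying vector space is zero'' to ``mixed Hodge structure is zero.'' Both are standard, so the proof is essentially a one-line consequence of~\eqref{eqn:hodge-ses}.
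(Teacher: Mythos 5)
Your proposal is correct and is essentially the paper's own argument: deduce $\uHom^i(\cP,\cG)=0$ for $i\ge 1$ from the projectivity of $\exs(\cP)$ (you just spell out the identification $\exs\uHom^i(\cP,\cG)\cong\Hom^i(\exs\cP,\exs\cG)$ and the faithfulness of $\exs$ on $\MHM(\pt)$, which the paper leaves implicit), and then conclude via the short exact sequence~\eqref{eqn:hodge-ses}. No gaps.
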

\begin{proof}
Since $\exs(\cP)$ is projective, $\uHom^i(\cP,\cG) = 0$ for all $i \ge 1$.  The result then follows from~\eqref{eqn:hodge-ses}.
\end{proof}

\begin{prop}\label{prop:rsc-equiv}
The realization functor
\[
\rsc: \Db\MHMs(X) \to \dsm(X)
\]
is an equivalence of categories.
\end{prop}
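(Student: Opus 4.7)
The plan is to verify essential surjectivity and full faithfulness of $\rsc$ separately. Essential surjectivity is standard: every $\cF \in \dsm(X)$ has bounded cohomology lying in $\MHMs(X)$, so the truncation triangles $\tau_{\le n}\cF \to \cF \to \tau_{\ge n+1}\cF \to$ exhibit $\cF$ as an iterated cone of shifts of objects of $\MHMs(X)$. Since $\rsc$ is triangulated and restricts to the identity on the heart, its essential image is a triangulated subcategory of $\dsm(X)$ containing $\MHMs(X)$, and hence contains $\cF$.

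For full faithfulness, a standard truncation argument reduces the problem to proving that for all $\cF,\cG \in \MHMs(X)$ and all $i \ge 0$, the canonical map
\[
\rsc: \Ext^i_{\MHMs(X)}(\cF,\cG) \to \Hom^i_{\dsm(X)}(\cF,\cG)
\]
is an isomorphism. The cases $i = 0, 1$ are \eqref{eqn:ext1-real}. For $i \ge 2$, I would fix a finite $\exs$-projective resolution $\cP^\bullet \to \cF$ (available by Proposition~\ref{prop:pseudoproj}), which is a quasi-isomorphism in both $\Db\MHMs(X)$ and $\dsm(X)$. Viewing $\cP^\bullet$ as an iterated cone of its terms under brutal truncation produces parallel spectral sequences
\[
E_1^{p,q} = \Hom^q(\cP^{-p},\cG) \Rightarrow \Hom^{p+q}(\cF,\cG)
\]
in both categories, with $\rsc$ supplying a morphism between them.

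On the $\dsm(X)$-side, the Lemma preceding the proposition forces $E_1^{p,q} = 0$ for $q \ge 2$, so the spectral sequence is concentrated in rows $q = 0, 1$, and by \eqref{eqn:ext1-real} the morphism induced by $\rsc$ is an isomorphism on these two rows. It remains to establish the same vanishing on the $\MHMs(X)$-side, i.e.\ that $\Ext^q_{\MHMs(X)}(\cP,\cG) = 0$ for every $\exs$-projective $\cP$ and every $q \ge 2$. For $q = 2$ this is immediate from \eqref{eqn:ext2-real} combined with the Lemma, since both target and source vanish.

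The main obstacle is the case $q \ge 3$. I would handle it by induction on $q$: choose an auxiliary $\exs$-projective cover $0 \to \cK \to \cP' \to \cP \to 0$ of $\cP$, apply the long exact sequence for $\Ext^*_{\MHMs(X)}(-,\cG)$, and dimension-shift using the inductive hypothesis together with finiteness of the $\exs$-projective dimension of the syzygy $\cK$ (again via Proposition~\ref{prop:pseudoproj}). Once this vanishing is in hand, both spectral sequences are concentrated in rows $q = 0,1$ and are isomorphic at $E_1$ via $\rsc$; it follows that $\rsc$ induces an isomorphism on $\Ext^i$ for all $i$, establishing full faithfulness and completing the proof.
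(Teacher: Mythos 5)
Your reduction (essential surjectivity by truncation, full faithfulness reduced to showing $\rsc$ is an isomorphism on all $\Ext^i$, then comparison via a finite $\exs$-projective resolution) is reasonable in outline, but there is a genuine gap at exactly the hard point: the vanishing $\Ext^q_{\MHMs(X)}(\cP,\cG)=0$ for $\exs$-projective $\cP$ and $q\ge 3$. Your proposed induction does not establish it. From the short exact sequence $0\to\cK\to\cP'\to\cP\to 0$ the long exact sequence traps $\Ext^q(\cP,\cG)$ between $\Ext^{q-1}(\cK,\cG)$ and $\Ext^q(\cP',\cG)$; the second term is the statement you are trying to prove, at the \emph{same} degree $q$ (circular), and the first does not vanish by your inductive hypothesis because the syzygy $\cK$ is not $\exs$-projective. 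Strengthening the hypothesis to ``objects of $\exs$-projective dimension $\le d$ have $\Ext^q=0$ for $q\ge d+2$'' only pushes the problem back to the same unproven base case. The underlying obstruction is that $\exs$-projectives are \emph{not} projective in $\MHMs(X)$, so no dimension-shifting isomorphism is available, and the comparison maps \eqref{eqn:ext1-real}--\eqref{eqn:ext2-real} import the triangulated vanishing into the abelian category only up to degree $2$; in degree $\ge 3$ there is no injectivity statement to invoke. (The gap is repairable without changing your strategy: a Yoneda class in $\Ext^q_{\MHMs(X)}(\cP,\cG)$ can be spliced as a product of a class in $\Ext^{q-1}_{\MHMs(X)}(\cP,Z)$ with a class in $\Ext^1(Z,\cG)$ for some $Z$, so induction on $q$ starting from the $q=2$ case you did prove gives the vanishing.)

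The paper avoids this issue entirely by not comparing $\Ext$-groups directly: by \cite[Proposition~3.1.16]{bbd}, it suffices to show that every morphism $f:\cF\to\cG[n]$ in $\dsm(X)$ with $n>0$ is killed by precomposition with some epimorphism $p:\tilde\cF\to\cF$ in $\MHMs(X)$. For $n=1$ this is \cite[Remarque~3.1.17(ii)]{bbd}, and for $n\ge 2$ one takes $\tilde\cF$ to be an $\exs$-projective cover (Proposition~\ref{prop:pseudoproj}), which works immediately by the Lemma you quoted, since $\Hom^n_{\dsm(X)}(\tilde\cF,\cG)=0$. This effaceability argument is both shorter and sidesteps the need for any higher-$\Ext$ vanishing inside the abelian category; you may also want to note that comparing your two spectral sequences via $\rsc$ is cleaner if phrased as an induction on the length of the resolution using the five lemma, since cones are not functorial.
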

\begin{proof}
Let $\cF, \cG \in \MHMs(X)$, and let $f: \cF \to \cG[n]$ be a morphism in $\dsm(X)$ with $n > 0$.  According to~\cite[Proposition~3.1.16]{bbd}, it suffices to establish the following claim: There is a surjection $p: \tilde \cF \to \cF$ in $\MHMs(X)$ such that $f \circ p = 0$.  If $n = 1$, this is guaranteed by~\cite[Remarque~3.1.17(ii)]{bbd}; if $n \ge 2$, the preceding lemma implies that any $\exs$-projective cover of $\cF$ has the required property.
\end{proof}

An immediate corollary is that the natural transformation~\eqref{eqn:exti-real}, which can be rewritten using~\eqref{eqn:dsm-hom} as
\begin{equation}\label{eqn:rsc-equiv-transf}
\rsc: \Ext^i_{\MHMs(X)}(\cF,\cG) \simto \Ext^i_{\MHM(X)}(\cF,\cG),
\end{equation}
is an isomorphism for all $\cF, \cG \in \MHMs(X)$.

\section{Lemmas on Hom- and Ext-groups}
\label{sect:homext}

For the rest of the paper, we consider only affine even stratifications.  In this section, we prove a number of technical results about $\uHom^i({-},{-})$, $\Ext^i_{\MHMs(X)}({-},{-})$, and a new functor denoted $\gHom^i({-},{-})$.

\subsection{Hom-groups for pure objects}
\label{subsect:hompure}

The following lemma and its corollary will be used many times in this section and the next.  For now, the main consequence is Proposition~\ref{prop:dimhomfin}, establishing condition~\eqref{eqn:dimhomfin} for $\dsm(X)$. 

\begin{lem}\label{lem:hom-semis}
Suppose $\cF,\cG \in \MHMs(X)$ are simple.  The mixed Hodge structure
\[
\uHom^i(\cF,\cG)
\]
vanishes if $i \not\equiv \wt(\cF) - \wt(\cG) \pmod 2$, and is isomorphic to a direct sum of finitely many copies of $\uF((\wt(\cF) - \wt(\cG) -i)/2)$ otherwise.
\end{lem}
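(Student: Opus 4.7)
I reduce to the case $\cF = \MC_s$, $\cG = \MC_t$ by factoring out Tate twists: writing $\cF = \MC_s(m)$, $\cG = \MC_t(n)$, the natural isomorphism $\uHom^i(\MC_s(m), \MC_t(n)) \cong \uHom^i(\MC_s, \MC_t)(n-m)$ converts the general statement into the case $m=n=0$. I then induct on the number of strata. The base case is a single stratum $X = X_s = \C^d$, where contractibility yields $\uHom^i(\uF_X, \uF_X) \cong H^i(X, \F)$ (with its natural mixed Hodge structure), which equals $\uF_\pt$ for $i=0$ and vanishes otherwise.

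For the inductive step, choose a closed stratum $X_u$ with inclusion $\iota : X_u \hookrightarrow X$ and open complement $j : V = X \setminus X_u \hookrightarrow X$; the piece $V$ has one fewer stratum, and the restriction of $\cS$ to $V$ remains an affine even stratification. Applying the attaching triangle $\iota_* \iota^! \to \mathrm{id} \to j_* j^*$ to $\mathcal{H}om(\cF, \cG)$, using the standard identities $\iota^!\mathcal{H}om(\cF,\cG) \cong \mathcal{H}om(\iota^*\cF, \iota^!\cG)$ and (since $j$ is smooth) $j^*\mathcal{H}om(\cF,\cG) \cong \mathcal{H}om(j^*\cF, j^*\cG)$, then pushing down by $a_*$ and taking cohomology, I obtain a long exact sequence
\[
\cdots \to A^i \to B^i \to C^i \to A^{i+1} \to \cdots,
\]
with $B^i = \uHom^i(\cF, \cG)$, $A^i = \uHom^i_{X_u}(\iota^*\cF, \iota^!\cG)$, and $C^i = \uHom^i_V(j^*\cF, j^*\cG)$.

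Both outer terms already have the claimed form. For $C^i$: if $s = u$ or $t = u$ then one of $j^*\cF$, $j^*\cG$ vanishes and $C^i = 0$; otherwise $j^*\MC_s$ and $j^*\MC_t$ are the corresponding simple Hodge modules on $V$ and the induction hypothesis applies. For $A^i$: Definition~\ref{defn:aff-even} decomposes $\iota^*\MC_s$ into a direct sum of shifted Tate twists of $\uF_{X_u}$ with parity and twist prescribed by $\dim X_s - \dim X_u$, and applying Verdier duality via $\mathbb{D}\MC_t = \MC_t(\dim X_t)$ produces an analogous decomposition of $\iota^!\MC_t$. Combining these two decompositions with the base-case calculation $\uHom^i_{X_u}(\uF_{X_u}(a), \uF_{X_u}(b)) = \uF_\pt(b-a)$ for $i = 0$ (and zero otherwise) and carefully tracking Tate twists, one finds that $A^i$ is a direct sum of copies of $\uF_\pt((\dim X_s - \dim X_t - i)/2)$ when $i \equiv \dim X_s - \dim X_t \pmod 2$, and is zero otherwise.

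Since $A^\bullet$ and $C^\bullet$ both vanish outside the parity $\dim X_s - \dim X_t \pmod 2$, the long exact sequence collapses to $0 \to A^i \to B^i \to C^i \to 0$ for $i$ of that parity (forcing $B^i = 0$ for the opposite parity). The resulting extension is classified by $\Ext^1_{\MHM(\pt)}(\uF_\pt(k), \uF_\pt(k))$ with $k = (\dim X_s - \dim X_t - i)/2$, which vanishes because polarizable pure Hodge structures of a fixed weight form a semisimple category. Hence the sequence splits and $B^i = \uHom^i(\cF, \cG)$ has the claimed form, closing the induction. The main obstacle is the analysis of $A^i$: one must formulate the Verdier-dual of the affine even condition, combine the two decompositions while carefully tracking Tate twists, and verify that $A^i$ and $C^i$ end up as sums of the \emph{same} Tate twist---this agreement is precisely what makes the extension-splitting argument work.
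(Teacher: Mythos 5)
Your proof is correct and follows essentially the same route as the paper's: reduce to $\MC_s,\MC_t$, induct on the number of strata, and use the attaching triangle for a closed stratum together with adjunction to get the long exact sequence relating $\uHom^i(j_u^*\MC_s, j_u^!\MC_t)$, $\uHom^i(\MC_s,\MC_t)$, and the open restriction, with the affine-even condition (plus duality for $j_u^!$) handling the closed-stratum term. The only difference is that you spell out the final splitting of the extension of Tate-type pure objects, which the paper leaves implicit in ``the proposition follows from the long exact sequence.''
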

\begin{proof}
Let us assume that $\cF = \MC_s$ and $\cG = \MC_t$.  Since every simple object is isomorphic to a Tate twist of some $\MC_u$, it suffices to treat this special case.

We proceed by induction on the number of strata in $X$.  Let $j_u: X_u \to X$ be the inclusion of a closed stratum, and let $h: U \to X$ be the inclusion of the complementary open subset.  Applying $\uHom^\bullet(\MC_s, {-})$ to the distinguished triangle $j_{u*}j_u^!\MC_t \to \MC_t \to h_*h^*\MC_t \to$, we obtain a long exact sequence
\begin{equation}\label{eqn:hom-semis-les}
\cdots \to \uHom^i(j_u^*\MC_s, j_u^!\MC_t) \to \uHom^i(\MC_s, \MC_t) \to \uHom^i(h^*\MC_s,h^*\MC_t) \to \cdots.
\end{equation}
Now, $j_u^*\MC_s$ is a direct sum of finitely many objects of the form
\[
\uF_{X_u}((\dim X_u - \dim X_s - k)/2)[-k] \qquad\text{where $k \equiv \dim X_s - \dim X_u \pmod 2$.}
\]
Similarly, $j_u^!\MC_t$ is a direct sum of finitely many objects of the form
\[
\uF_{X_u}((\dim X_u - \dim X_t - l)/2)[-l] \qquad\text{where $l \equiv \dim X_t - \dim X_u \pmod 2$.}
\]
Recall that $\uHom(\uF_{X_u}, \uF_{X_u}) \cong \uF_\pt$, and that $\uHom^i(\uF_{X_u}, \uF_{X_u}) = 0$ for $i \ne 0$. Thus,
\begin{multline*}
\uHom^i(\uF_{X_u}((\dim X_u - \dim X_s - k)/2)[-k], \uF_{X_u}((\dim X_u - \dim X_t - l)/2)[-l]) \\
\cong
\begin{cases}
\uF_\pt((\dim X_s - \dim X_t + k - l)/2)& \text{if $i = l - k$,} \\
0 & \text{otherwise.}
\end{cases}
\end{multline*}
It follows that $\uHom^i(j_u^*\MC_s, j_u^!\MC_t)$ vanishes when $i \not\equiv \dim X_s - \dim X_t \pmod 2$, and is a direct sum of finitely many copies of $\underline{\F}((\dim X_s - \dim X_t - i)/2)$ otherwise.  By induction, the same description holds for $\uHom^i(h^*\MC_s, h^*\MC_t)$, and then the proposition follows from the long exact sequence~\eqref{eqn:hom-semis-les}.
\end{proof}

\begin{cor}\label{cor:hh-perv}
Suppose $\cF,\cG \in \MHMs(X)$ are simple. If $n = (\wt(\cG) - \wt(\cF) + i)/2$, then the natural maps
\begin{equation}\label{eqn:hh-perv-iso}
\Ext^i_{\MHM(X)}(\cF,\cG(n)) \simto \HH^0(\uHom^i(\cF,\cG(n))) \simto \Ext^i_{\Pervs(X)}(\exs\cF,\exs\cG)
\end{equation}
are both isomorphisms.  On the other hand, if $n \ne (\wt(\cG) - \wt(\cF) + i )/2$, then
\begin{equation}\label{eqn:hh-perv-van}
\HH^0(\uHom^i(\cF,\cG(n)) = 0.
\end{equation}
As a consequence, for all $i \ge 0$, the functor $\exs$ induces an isomorphism
\begin{equation}\label{eqn:hh-perv-oplus}
\bigoplus_{n \in \Z} \HH^0(\uHom^i(\cF, \cG(n))) \simto \Ext^i_{\Pervs(X)}(\exs\cF, \exs\cG).
\end{equation}
\end{cor}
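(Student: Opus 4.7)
My overall approach is to reduce all three claims to an explicit calculation of $\uHom^i(\cF,\cG(n))$ using Lemma~\ref{lem:hom-semis}, and then combine this with the short exact sequence~\eqref{eqn:hodge-ses} and the realization equivalence~\eqref{eqn:dsp-equiv}. Specifically, since $\wt(\cG(n)) = \wt(\cG) - 2n$, Lemma~\ref{lem:hom-semis} applied to $\cF$ and $\cG(n)$ shows that $\uHom^i(\cF,\cG(n))$ vanishes when $i \not\equiv \wt(\cF) - \wt(\cG) \pmod{2}$, and is otherwise a direct sum of finitely many copies of $\uF(m)$ where $m = n + (\wt(\cF) - \wt(\cG) - i)/2$. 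Since $\HH^0(\uF(m)) = \Hom_{\MHM(\pt)}(\uF, \uF(m))$ is a morphism group between pure Hodge structures of weights $0$ and $-2m$, it vanishes unless $m = 0$; and $m = 0$ is precisely the condition $n = (\wt(\cG) - \wt(\cF) + i)/2$. This yields assertion~\eqref{eqn:hh-perv-van} at once.

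To prove the pair of isomorphisms~\eqref{eqn:hh-perv-iso}, assume $n = (\wt(\cG) - \wt(\cF) + i)/2$; in particular $i \equiv \wt(\cF) - \wt(\cG) \pmod{2}$, so the shifted parity for $i - 1$ fails and Lemma~\ref{lem:hom-semis} forces $\uHom^{i-1}(\cF, \cG(n)) = 0$. Thus the leftmost term of~\eqref{eqn:hodge-ses} vanishes, and the quotient map becomes an isomorphism $\Hom^i_{\dsm(X)}(\cF,\cG(n)) \simto \HH^0(\uHom^i(\cF,\cG(n)))$; identifying the source with $\Ext^i_{\MHM(X)}(\cF,\cG(n))$ via~\eqref{eqn:dsm-hom} supplies the first isomorphism of~\eqref{eqn:hh-perv-iso}. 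For the second, note that $\uHom^i(\cF,\cG(n))$ is now a direct sum of copies of $\uF$ itself (the case $m = 0$), on which the natural forgetful map $\HH^0(-) \to \exs(-)$ to the underlying vector space is an isomorphism; composing with the natural identifications $\exs \uHom^i(\cF,\cG(n)) \cong \Hom^i_{\dsp(X)}(\exs\cF, \exs\cG) \cong \Ext^i_{\Pervs(X)}(\exs\cF, \exs\cG)$ (the last step using~\eqref{eqn:dsp-equiv}) yields the second isomorphism.

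Finally, assertion~\eqref{eqn:hh-perv-oplus} follows by assembling what was proved: for each fixed $i$, the left-hand side has at most one nonzero summand, namely the one at $n = (\wt(\cG) - \wt(\cF) + i)/2$ when this is an integer, and for that $n$ the summand maps isomorphically onto the right-hand side by~\eqref{eqn:hh-perv-iso}; if instead $i \not\equiv \wt(\cF) - \wt(\cG) \pmod{2}$, so no integer $n$ satisfies the parity, then the left-hand side is zero by~\eqref{eqn:hh-perv-van}, while the right-hand side $\Ext^i_{\Pervs(X)}(\exs\cF,\exs\cG) \cong \exs\uHom^i(\cF,\cG)$ (via~\eqref{eqn:dsp-equiv}) vanishes by Lemma~\ref{lem:hom-semis} as well. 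I anticipate no serious obstacle in this proof; it is essentially bookkeeping, and the one delicate observation is that the parity constraint in Lemma~\ref{lem:hom-semis} automatically kills the $\HH^1$ term in~\eqref{eqn:hodge-ses} that would otherwise obstruct the first isomorphism.
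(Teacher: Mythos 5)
Your proposal is correct and follows essentially the same route as the paper: apply Lemma~\ref{lem:hom-semis} to identify $\uHom^i(\cF,\cG(n))$ as a sum of copies of $\uF(m)$ with $m=0$ exactly when $n=(\wt(\cG)-\wt(\cF)+i)/2$, use the resulting vanishing of $\uHom^{i-1}$ together with~\eqref{eqn:hodge-ses} for the first isomorphism and $\HH^0\cong\exs$ on sums of $\uF$ for the second, and conclude~\eqref{eqn:hh-perv-oplus} by the same parity case analysis. The only difference is that you spell out the weight bookkeeping ($\wt(\cG(n))=\wt(\cG)-2n$, $m=n+(\wt(\cF)-\wt(\cG)-i)/2$) which the paper leaves implicit.
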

\begin{proof}
If $n = (\wt(\cG) - \wt(\cF) +i)/2$, then, by Lemma~\ref{lem:hom-semis}, $\uHom^i(\cF,\cG(n))$ is isomorphic to a direct sum of copies of $\uF_\pt$.  For such an object, the functor $\HH^0({-})$ coincides with simply taking the underlying vector space; this gives the second isomorphism in~\eqref{eqn:hh-perv-iso}.  That lemma also tells us that $\uHom^{i-1}(\cF,\cG(n)) = 0$, so the first isomorphism in~\eqref{eqn:hh-perv-iso} follows from~\eqref{eqn:hodge-ses}.

For~\eqref{eqn:hh-perv-van}, we have that $\uHom^i(\cF,\cG(n))$ is pure with nonzero weight.

When $i \equiv \wt(\cG) - \wt(\cF) \pmod 2$,~\eqref{eqn:hh-perv-oplus} is an immediate consequence of~\eqref{eqn:hh-perv-iso} and~\eqref{eqn:hh-perv-van}.  When $i \not\equiv \wt(\cG) - \wt(\cF) \pmod 2$, the left-hand side of~\eqref{eqn:hh-perv-oplus} vanishes by~\eqref{eqn:hh-perv-van}.  The right-hand side also vanishes: we have $\Ext^i(\exs\cF,\exs\cG) \cong \exs \uHom^i(\cF,\cG)$, and $\uHom^i(\cF,\cG)$ vanishes by Lemma~\ref{lem:hom-semis}.  Thus,~\eqref{eqn:hh-perv-oplus} holds in all cases.
\end{proof}

\begin{prop}\label{prop:dimhomfin}
If $\cF,\cG \in \Pure(X)$, then $\dim \Hom_{\dsm(X)}(\cF,\cG) < \infty$.
\end{prop}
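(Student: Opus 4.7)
The plan is to reduce to the case of simple objects in $\MHMs(X)$, then apply Corollary~\ref{cor:hh-perv} to transfer the computation of $\Hom$ in $\dsm(X)$ to an $\Ext$-group in $\Pervs(X)$, where finite-dimensionality is already known via the equivalence~\eqref{eqn:dsp-equiv}.

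First, I would observe that every object of $\Pure(X)$ is a finite direct sum of shifts of simple objects of $\MHMs(X)$: such a complex is bounded, and each cohomology $H^i(\cF) \in \MHMs(X)_i$ has finite length. Since $\Hom$ commutes with finite direct sums in both variables, it suffices to prove finiteness when $\cF = L[-\wt L]$ and $\cG = L'[-\wt L']$ for simple $L,L' \in \MHMs(X)$. Writing $i = \wt L - \wt L'$, we have
\[
\Hom_{\dsm(X)}(\cF,\cG) \cong \Hom^i_{\dsm(X)}(L,L'),
\]
which by~\eqref{eqn:dsm-hom} equals $\Ext^i_{\MHM(X)}(L,L')$.

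Next, I would apply Corollary~\ref{cor:hh-perv} with $n = 0$. The compatibility condition $n = (\wt(\cG) - \wt(\cF) + i)/2$ becomes $0 = (\wt L' - \wt L + i)/2$, which holds by our choice of $i$. Thus~\eqref{eqn:hh-perv-iso} gives an isomorphism
\[
\Ext^i_{\MHM(X)}(L,L') \simto \Ext^i_{\Pervs(X)}(\exs L, \exs L').
\]
By the realization equivalence~\eqref{eqn:dsp-equiv}, this $\Ext$-group is identified with $\Hom^i_{\dsp(X)}(\exs L, \exs L')$, which is finite-dimensional by the general finiteness of $\Hom$-groups in $\dsp(X)$ noted in Section~\ref{subsect:mixedhodge}. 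Combining the identifications proves the proposition.

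There is no real obstacle here; the only subtlety is keeping careful track of the various shifts and weights so that the parity and Tate-twist conditions in Corollary~\ref{cor:hh-perv} line up. Once those bookkeeping checks are in place, the argument is an immediate three-step reduction: simple summands $\to$ Corollary~\ref{cor:hh-perv} $\to$ known finiteness in $\dsp(X)$.
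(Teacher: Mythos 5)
Your proof is correct and takes essentially the same route as the paper's: reduce via semisimplicity of $\Pure(X)$ to shifts of simple (pure) objects, rewrite the $\Hom$-group as $\Ext^{\wt\cF_0-\wt\cG_0}_{\MHM(X)}$ using~\eqref{eqn:dsm-hom}, and conclude finiteness from~\eqref{eqn:hh-perv-iso} and the finite-dimensionality of $\Ext$-groups in $\Pervs(X)$. The only cosmetic difference is that you spell out the choice $n=0$ in Corollary~\ref{cor:hh-perv} and route through~\eqref{eqn:dsp-equiv}, which the paper leaves implicit.
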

\begin{proof}
Recall from Section~\ref{subsect:winnowmix} that every object of $\Pure(X)$ is semisimple.  Thus, it suffices to consider the special case where $\cF = \cF_0[-\wt(\cF_0)]$ and $\cG = \cG_0[-\wt(\cG_0)]$, where $\cF_0$ and $\cG_0$ are pure objects of $\MHMs(X)$.  Then
\[
\Hom(\cF,\cG) \cong \Ext^{\wt(\cF_0) - \wt(\cG_0)}_{\MHM(X)}(\cF_0,\cG_0),
\]
and the latter is finite-dimensional by~\eqref{eqn:hh-perv-iso}.
\end{proof}

\subsection{A new delta functor for $\MHMs(X)$}
\label{subsect:new-delta}

For $\cF,\cG \in \MHMs(X)$, define
\[
\gHom^i(\cF,\cG) = \exs\gr_0 \uHom^i(\cF,\cG).
\]
Since $\gr_0$ and $\exs$ are both exact, this clearly defines a $\delta$-functor on $\MHMs(X)$.  The mixed Hodge structure $\gr_0 \uHom^i(\cF,\cG)$ is a direct sum of copies of $\uF$: this follows from the more general observation (implied by Lemma~\ref{lem:hom-semis}) that every composition factor of $\uHom^i(\cF,\cG)$ is of the form $\uF(n)$ for some $n$.

\begin{lem}\label{lem:ghom-basic}
Let $\cF, \cG \in \MHMs(X)$.
\begin{enumerate}
\item If $\cF$ has weights${}\le n$ and $\cG$ has weights${}>n-i$, then $\gHom^i(\cF,\cG) = 0$.\label{it:ghom-mixed}
\item If $\cF$ has weights${}\ge n$ and $\cG$ has weights${}<n-i$, then $\gHom^i(\cF,\cG) = 0$.\label{it:ghom-comixed}
\item For any $n \in \Z$, there is a natural transformation
\[
\gHom(\cF,\cG) \to \gHom(\cF_{\le n}, \cG_{\le n})
\]
that is the identity map if $\cF$ and $\cG$ happen to have weights${}\le n$.\label{it:ghom-leq}
\item For any $n \in \Z$, there is a natural transformation
\[
\gHom(\cF,\cG) \to \gHom(\cF_{\ge n}, \cG_{\ge n})
\]
that is the identity map if $\cF$ and $\cG$ happen to have weights${}\ge n$.\label{it:ghom-geq}
\item There is a natural morphism of $\delta$-functors
\[
\hexs: \Ext^i_{\MHM(X)}(\cF,\cG) \to \gHom^i(\cF,\cG).
\]
If $\cF$ is pure of weight $n$ and $\cG$ is pure of weight $n-i$, this is an isomorphism.\label{it:ghom-hexs}
\end{enumerate}
\end{lem}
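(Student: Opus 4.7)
The plan hinges on two consequences of Lemma~\ref{lem:hom-semis}: (a) for simple $\cF,\cG$, the mixed Hodge structure $\uHom^i(\cF,\cG)$ is pure of weight $\wt(\cG) - \wt(\cF) + i$; and (b) by induction on composition length using the long exact sequences for $\uHom^\bullet$ in each variable (which remain exact after the exact functors $\gr_0$ and $\exs$), every $\uHom^i(\cF,\cG)$ has only Tate composition factors, so that $\gr_0 \uHom^i(\cF,\cG)$ is a direct sum of copies of $\uF$. For such Tate-semisimple objects, $\HH^0$ agrees with $\exs$, a fact that will matter in part~(5).

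For parts~(1) and~(2), I would reduce to the simple case using the same long exact sequence argument. The weight hypothesis forces $\wt(\cG) \ne \wt(\cF) - i$, so by~(a) the Hodge structure $\uHom^i(\cF,\cG)$ is pure of nonzero weight and is killed by $\gr_0$. For parts~(3) and~(4), I would compose two natural maps. For~(3): apply $\uHom^\bullet(-,\cG)$ to $\cF_{\le n} \hookrightarrow \cF$, and $\uHom^\bullet(\cF_{\le n},-)$ to $0 \to \cG_{\le n} \to \cG \to \cG_{\ge n+1} \to 0$. After passing to $\gHom^i$, part~(1) kills $\gHom^{i-1}(\cF_{\le n},\cG_{\ge n+1})$ and $\gHom^i(\cF_{\le n}, \cG_{\ge n+1})$, so the induced map $\gHom^i(\cF_{\le n},\cG_{\le n}) \to \gHom^i(\cF_{\le n},\cG)$ is an isomorphism, and composition yields the desired transformation. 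Part~(4) is dual, using $\cG \twoheadrightarrow \cG_{\ge n}$ together with $0 \to \cF_{\le n-1} \to \cF \to \cF_{\ge n} \to 0$, and again invoking~(1) for the requisite vanishings. The identity property when both arguments already lie in the restricted range is immediate from unpacking the construction.

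For part~(5), I would build $\hexs$ in two stages: the surjection $\Ext^i_{\MHM(X)}(\cF,\cG) \twoheadrightarrow \HH^0(\uHom^i(\cF,\cG))$ coming from~\eqref{eqn:hodge-ses}, followed by the natural map $\HH^0(M) \to \exs(\gr_0 M)$ available whenever $M$ has Tate composition factors (any $\phi: \uF \to M$ factors through $M_{\le 0}$ by strictness of the weight filtration, then projects to $\gr_0 M$, on which $\HH^0$ and $\exs$ coincide). For $\cF$ pure of weight $n$ and $\cG$ pure of weight $n-i$, observation~(a) makes $\uHom^i(\cF,\cG)$ pure of weight $0$, while parity annihilates $\uHom^{i-1}(\cF,\cG)$; both stages of $\hexs$ are then isomorphisms. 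The main obstacle will be the $\delta$-functor compatibility in~(5): one must verify that the boundary maps of the $\Ext$-sequence in $\dsm(X)$ match those of the $\gHom$-sequence obtained by applying $H^\bullet(a_* \mathscr{H}\mathit{om}(\cF,-))$ to distinguished triangles. This reduces to the naturality of~\eqref{eqn:hodge-ses} in distinguished triangles, which is part of the basic formalism of $a_*$ as a functor of triangulated categories.
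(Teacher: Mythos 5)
Your proposal is correct and follows essentially the same route as the paper: dévissage to simple objects plus the purity/parity statement of Lemma~\ref{lem:hom-semis} for parts~(1) and~(2), the vanishing from part~(1) to invert the map $\gHom(\cF_{\le n},\cG_{\le n})\to\gHom(\cF_{\le n},\cG)$ (and its analogue for~(4)) in parts~(3) and~(4), and for part~(5) the composite of the second map in~\eqref{eqn:hodge-ses} with the natural map $\HH^0(M)\to\exs(\gr_0 M)$ for Tate objects, with purity of $\uHom^i$ and parity vanishing of $\uHom^{i-1}$ giving the isomorphism in the pure case. The $\delta$-functor compatibility you flag is handled exactly as you suggest, by naturality of~\eqref{eqn:hodge-ses} and of the weight-truncation functors.
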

\begin{proof}
For parts~\eqref{it:ghom-mixed} and~\eqref{it:ghom-comixed}, it suffices to treat the case where $\cF$ and $\cG$ are both simple.  In this case, using the assumptions on the weights of $\cF$ and $\cG$, Lemma~\ref{lem:hom-semis} tells us that $\uHom^i(\cF,\cG)$ is pure with nonzero weight.  Thus, $\gr_0 \uHom^i(\cF,\cG) = 0$.

For part~\eqref{it:ghom-leq}, consider the exact sequence
\[
0 \to \gHom(\cF_{\le n},\cG_{\le n}) \to \gHom(\cF_{\le n},\cG) \to \gHom(\cF_{\le n},\cG_{\ge n+1}) \to \cdots.
\]
The last term vanishes by part~\eqref{it:ghom-mixed}, so the first is an isomorphism.  Composing the inverse of that map with the natural map $\gHom(\cF,\cG) \to \gHom(\cF_{\le n},\cG)$ yields the desired natural transformation.  The proof of part~\eqref{it:ghom-geq} is similar.

Lastly, we turn to part~\eqref{it:ghom-hexs}.  Note first that the natural map
\begin{equation}\label{eqn:ghom-hexs1}
\HH^0(\uHom^i(\cF,\cG)_{\le 0}) \simto \HH^0(\uHom^i(\cF,\cG))
\end{equation}
is an isomorphism, as we clearly have $\HH^0(\uHom^i(\cF,\cG)_{> 0}) = 0$.  Next, we have a natural map
\begin{equation}\label{eqn:ghom-hexs2}
\HH^0(\uHom^i(\cF,\cG)_{\le 0}) \to \HH^0(\gr_0 \uHom^i(\cF,\cG)).
\end{equation}
Composing the second map in~\eqref{eqn:hodge-ses} with the inverse of~\eqref{eqn:ghom-hexs1} and then with~\eqref{eqn:ghom-hexs2}, we obtain a natural map $\Ext^i_{\MHM(X)}(\cF,\cG) \to \HH^0(\gr_0 \uHom^i(\cF,\cG))$.  Finally, observe that if $M \in \MHM(\pt)$ is a direct sum of copies of $\uF$, then there is a natural isomorphism $\HH^0(M) \cong \exs(M)$.  In particular, this applies to $M = \gr_0 \uHom^i(\cF,\cG)$, and we thus obtain the desired natural transformation $\hexs$.

If $\cF$ is pure of weight $n$ and $\cG$ is pure of weight $n-i$, then $\uHom^i(\cF,\cG)$ is pure of weight $0$ by Lemma~\ref{lem:hom-semis}, so~\eqref{eqn:ghom-hexs2} is an isomorphism.  The second map in~\eqref{eqn:hodge-ses} is also an isomorphism (see~\eqref{eqn:hh-perv-iso}), so $\hexs$ is an isomorphism in this case.
\end{proof}

\begin{lem}\label{lem:ghom-univ}
The $\delta$-functor $\gHom(-,-)$ is universal.
\end{lem}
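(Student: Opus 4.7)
The plan is to apply Grothendieck's standard criterion: a $\delta$-functor on an abelian category is universal provided it is (co)effaceable in positive degrees. For each fixed $\cG \in \MHMs(X)$, I will show that the contravariant $\delta$-functor $\cF \mapsto \gHom^\bullet(\cF,\cG)$ is coeffaceable, using the $\exs$-projective covers supplied by Proposition~\ref{prop:pseudoproj}.

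First, I would record that $\gHom^\bullet(-,\cG)$ is indeed a $\delta$-functor on $\MHMs(X)$: a short exact sequence in $\MHMs(X)$ yields a long exact sequence of $\uHom^i(-,\cG)$ in $\MHM(\pt)$, and applying the exact functors $\gr_0$ and $\exs$ delivers the requisite long exact sequence of $\gHom^i$'s.

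By Proposition~\ref{prop:pseudoproj}, every object of $\MHMs(X)$ admits a surjection from a $\exs$-projective. It therefore suffices to prove that $\gHom^i(\cP,\cG) = 0$ whenever $\cP$ is $\exs$-projective, $\cG \in \MHMs(X)$ is arbitrary, and $i \ge 1$. Since $\exs(\cP)$ is projective in $\Pervs(X)$, we have $\Hom^i_{\dsp(X)}(\exs\cP,\exs\cG) = 0$ for $i \ge 1$ (using the equivalence \eqref{eqn:dsp-equiv}). But this group coincides with $\exs\uHom^i(\cP,\cG)$, since $\exs$ commutes with all sheaf-theoretic operations. As $\exs: \MHM(\pt) \to \Vect_\F$ is conservative (a mixed Hodge structure with zero underlying vector space is itself zero), this forces $\uHom^i(\cP,\cG) = 0$ in $\MHM(\pt)$, and hence $\gHom^i(\cP,\cG) = \exs\gr_0\uHom^i(\cP,\cG) = 0$. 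Grothendieck's criterion then yields universality.

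The argument is essentially mechanical once the earlier results are in hand; the only conceptual point worth isolating is that even though $\exs$-projectives fail to be genuinely projective in $\MHMs(X)$ (so do not efface $\Ext^i$ for $i \ge 1$), they do efface $\gHom^i$, because passing to $\gr_0 \circ \exs$ discards precisely the Hodge-theoretic ``extra'' extensions that obstruct true projectivity. No step presents a real obstacle; one mainly needs to verify conservativity of $\exs$ on $\MHM(\pt)$ and transport the long exact sequences through the exact functors $\gr_0$ and $\exs$.
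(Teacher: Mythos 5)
Your proof is correct and follows essentially the same route as the paper: both reduce universality to effaceability via Grothendieck's criterion, using Proposition~\ref{prop:pseudoproj} to cover every object by a $\exs$-projective $\cP$ and observing that $\uHom^i(\cP,-)$, hence $\gHom^i(\cP,-)$, vanishes for $i\ge 1$ because $\exs\cP$ is projective in $\Pervs(X)$ and $\exs$ is conservative on $\MHM(\pt)$. You merely spell out the vanishing step (via \eqref{eqn:dsp-equiv} and conservativity) that the paper leaves implicit, having already recorded it in the lemma preceding Proposition~\ref{prop:rsc-equiv}.
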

\begin{proof}
If $\cF$ is $\exs$-projective, then $\uHom^i(\cF,-)$ vanishes for $i > 0$, and hence so does $\gHom^i(\cF,-)$.  Recall from Proposition~\ref{prop:pseudoproj} that every object is a quotient of a $\exs$-projective one.  It follows that $\gHom^\bullet(-,-)$ is effaceable.
\end{proof}

\section{Winnowed mixed Hodge modules and Koszulity}
\label{sect:koszul}

In Proposition~\ref{prop:dimhomfin}, we established that condition~\eqref{eqn:dimhomfin} holds for the mixed triangulated category $\dsm(X)$.  Condition~\eqref{eqn:mixedsplit} for $\MHMs(X)$ is clear, so the machinery of Section~\ref{subsect:winnowmix} applies, and we have an abelian category
\[
\MHMso(X) \subset \Kb\Pure(X).
\]
In this section, we will prove that $\MHMso(X)$ is a Koszul category.  The main tool will be the $\delta$-functor $\gHom({-},{-})$ that was introduced in the previous section: it will serve as a sort of intermediary between $\Hom$-groups in $\dsm(X)$ (where geometric methods relying on the properties of affine even stratifications are available) and $\Ext$-groups in $\MHMso(X)$ (which are the ones we seek to understand).

Recall from Proposition~\ref{prop:rsc-equiv} that $\rsc$ denotes the realization functor
\[
\rsc: \Db\MHMs(X) \simto \dsm(X).
\]
We now also consider a realization functor for $\MHMso(X)$, which will be denoted
\[
\rso: \Db\MHMso(X) \to \Kb\Pure(X).
\]
Like $\rsc$, this gives rise to a morphism of $\delta$-functors~\eqref{eqn:exti-real}.  These morphisms, along with the morphism $\hexs$ of Lemma~\ref{lem:ghom-basic}\eqref{it:ghom-hexs} and the morphisms of $\Ext$-groups induced by the exact functors $\beta$ and $\exs$, are shown in Figure~\ref{fig:delta}.  That figure also contains a new morphism $\hbeta$ that has not previously been discussed.

\begin{figure}
\[
\xymatrix@C=0pt{
\Ext^i_{\MHMs(X)}({-},{-}) \ar[dr]^{\rsc}_{\sim} \ar[ddd]_{\beta} 
  &&\quad& \Hom^i_{\ds(X)}(\exs(-),\exs(-)) \ar@{=}[d] \\
& \Ext^i_{\MHM(X)}({-},{-}) \ar[rr]^-{\exs} \ar[d]^{\hexs}
  && \Ext^i_{\Pervs(X)}(\exs(-),\exs(-)) \\
& \gHom^i({-},{-}) \ar[dl]^{\hbeta} \\
\Ext^i_{\MHMso(X)}(\beta(-),\beta(-)) \ar[d]_{\rso} \\
\Hom^i_{\Kb\Pure(X)}(\beta(-),\beta(-))}
\]
\caption{$\delta$-functors for $\MHMs(X)$.}\label{fig:delta}
\end{figure}

\begin{lem}\label{lem:hbeta}
There is a morphism of $\delta$-functors
\[
\hbeta: \gHom^\bullet(-,-) \to \Ext_{\MHMso(X)}^\bullet(\beta(-),\beta(-))
\]
that makes the diagram in Figure~\ref{fig:delta} commute.
\end{lem}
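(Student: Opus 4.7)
The approach is to exploit the universality of the $\delta$-functor $\gHom^\bullet$ established in Lemma~\ref{lem:ghom-univ}: to exhibit $\hbeta$ as a morphism of $\delta$-functors it suffices to specify its degree-$0$ component $\hbeta^0$, and the universal property produces a unique extension to all degrees.

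I would construct $\hbeta^0: \gHom^0(-,-) \to \Hom_{\MHMso(X)}(\beta-,\beta-)$ by first identifying $\gHom^0(\cF,\cG) = \exs\gr_0\uHom^0(\cF,\cG)$ canonically with the diagonal direct sum $\bigoplus_n \Hom_{\MHM(X)}(\gr_n\cF, \gr_n\cG)$.  By Lemma~\ref{lem:hom-semis}, the mixed Hodge structure $\uHom^0(\gr_n\cF, \gr_m\cG)$ is pure of weight $m-n$ when nonzero, so a weight-filtration diagram chase using~\eqref{eqn:mixed-ses} for both $\cF$ and $\cG$ shows that the weight-$0$ part of $\uHom^0(\cF,\cG)$ receives contributions only from diagonal terms $n=m$.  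Given a tuple $(\phi_n)_n$, define $\hbeta^0((\phi_n))$ to be the chain map $\beta\cF \to \beta\cG$ in $\Kb\Pure(X)$ whose component in cohomological position $-n$ is $\phi_n[-n]\colon \gr_n\cF[-n] \to \gr_n\cG[-n]$; compatibility with the differentials~\eqref{eqn:funct-triangle} is automatic from the naturality of those distinguished triangles.  Commutativity at degree $0$ then follows by unwinding: under the identification above, $\hexs^0(\phi)$ corresponds to $(\gr_n\phi)_n$, and by the explicit formula in the proof of Proposition~\ref{prop:winnow-beta}, $\beta(\phi)$ has $(-n)$-th component $\gr_n\phi[-n]$.

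The main obstacle is to verify the full diagram's commutativity in higher degrees, since $\Ext^\bullet_{\MHM(X)}|_{\MHMs(X)}$ is not universal --- by~\eqref{eqn:hodge-ses}, $\Ext^1_{\MHM(X)}(\cP, \cG) \cong \HH^1(\uHom^0(\cP, \cG))$ does not in general vanish for $\exs$-projective $\cP$, so effaceability fails at degree $1$.  Effaceability does hold in degrees $\ge 2$, which reduces the problem to checking $\hbeta^1 \circ \hexs^1 = \beta^1 \circ \rsc^{-1}$ (after which a standard induction via connecting morphisms extends the equality to all degrees).  I would decompose $\Ext^1_{\MHM(X)}(\cF,\cG)$ via~\eqref{eqn:hodge-ses}.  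On the ``Hodge'' summand $\HH^1(\uHom^0(\cF,\cG))$: it coincides with $\ker \hexs^1$ by construction (proof of Lemma~\ref{lem:ghom-basic}\eqref{it:ghom-hexs}), so it is killed by $\hbeta \circ \hexs^1$; to see that it is also killed by $\beta\circ\rsc^{-1}$, reduce via~\eqref{eqn:mixed-ses} to the case of simple $\cF,\cG$, where Lemma~\ref{lem:hom-semis} forces $\uHom^0(\cF,\cG)$ to be pure of weight $0$ and hence $\HH^1(\uHom^0(\cF,\cG)) = 0$.  On the ``geometric'' summand $\HH^0(\uHom^1(\cF,\cG))$: every element is in the image of some connecting morphism coming from a SES of mixed Hodge modules, and the $\delta$-functor axioms together with the degree-$0$ commutativity established earlier force the desired equality.
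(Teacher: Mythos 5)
Your construction of $\hbeta$ in degree $0$ (the diagonal identification of $\gHom^0$ and the componentwise chain map) and the appeal to the universality of $\gHom^\bullet$ to extend it are essentially the paper's argument. The gap is in your degree-$1$ verification. The claim that for simple $\cF,\cG$ one has $\uHom^0(\cF,\cG)$ pure \emph{of weight $0$}, hence $\HH^1(\uHom^0(\cF,\cG))=0$, is false: Lemma~\ref{lem:hom-semis} says $\uHom^0(\cF,\cG)$ is a sum of copies of $\uF((\wt(\cF)-\wt(\cG))/2)$, which is pure of weight $\wt(\cG)-\wt(\cF)$, and $\HH^1(\uF(n))\cong\Ext^1_{\MHM(\pt)}(\uF,\uF(n))\cong\C/(2\pi i)^n\F\ne 0$ for $n\ge 1$. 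Taking $\cF=\MC_s$, $\cG=\MC_s(n)$ gives $\HH^1(\uHom^0(\cF,\cG))\ne 0$; this is precisely the non-Koszulity phenomenon of Remark~\ref{rmk:failure}, i.e.\ exactly the classes the winnowing is designed to kill. So the ``Hodge summand'' step collapses, and no d\'evissage can rescue it, since the vanishing you need already fails on simples; showing directly that $\beta\circ\rsc^{-1}$ annihilates $\HH^1(\uHom^{0}(\cF,\cG))$ is a genuine assertion that your argument does not establish.

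The detour is also unnecessary, because the premise motivating it is mistaken: while $\Ext^1_{\MHM(X)}(\cP,\cG)$ need not vanish for $\exs$-projective $\cP$, universality does not require effaceability by $\exs$-projectives. Via \eqref{eqn:rsc-equiv-transf}, the $\delta$-functor $\Ext^\bullet_{\MHM(X)}({-},{-})$ on $\MHMs(X)$ is identified by $\rsc$ with the Yoneda $\Ext$ of the abelian category $\MHMs(X)$, which is universal (as already noted after \eqref{eqn:exti-real}). This is what the paper uses: $\hbeta\circ\hexs\circ\rsc$ and $\beta$ are two morphisms of $\delta$-functors out of the universal $\delta$-functor $\Ext^\bullet_{\MHMs(X)}({-},{-})$ that agree in degree $0$, hence coincide in all degrees. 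Equivalently, your own closing observation already suffices if applied to all of $\Ext^1$ rather than only to the quotient $\HH^0(\uHom^1(\cF,\cG))$: every class in $\Ext^1_{\MHM(X)}(\cF,\cG)$ is the connecting image of a degree-$0$ class for the short exact sequence representing it (which lies in $\MHMs(X)$, a Serre subcategory), so compatibility with connecting morphisms plus degree-$0$ agreement gives degree $1$ directly, with no decomposition via \eqref{eqn:hodge-ses} needed; dimension shifting then handles higher degrees.
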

\begin{proof}
It follows from parts~\eqref{it:ghom-leq} and~\eqref{it:ghom-geq} of Lemma~\ref{lem:ghom-basic} that for any $n \in \Z$, the functor $\gr_n$ induces a natural transformation
\[
\gr_n: \gHom(\cF,\cG) \to \gHom(\gr_n\cF,\gr_n\cG) \cong \Hom_{\MHMs(X)}(\gr_n\cF,\gr_n\cG),
\]
where the last isomorphism comes from Lemma~\ref{lem:ghom-basic}\eqref{it:ghom-hexs}.  Given $f \in \gHom(\cF,\cG)$, let $\hbeta(f)^n$ denote the map $\gr_{-n}f[n]: \gr_{-n}\cF[n] \to \gr_{-n}\cG[n]$, thought of as a morphism in $\Pure(X)$.  The same construction appeared in the description of the functor $\beta$ in Proposition~\ref{prop:winnow-beta}, and the reasoning given there shows again that, taken together, the $\hbeta(f)^n$ constitute a chain map $\beta(\cF) \to \beta(\cG)$.  It is obvious by construction that the diagram in Figure~\ref{fig:lex} of left-exact functors of abelian categories commutes.

Because $\gHom^\bullet(-,-)$ is universal, the morphism $\hbeta$ extends in a unique way to a morphism of $\delta$-functors.  Next, $\hbeta \circ \hexs \circ \rsc$ and $\beta$ are both morphisms of $\delta$-functors $\Ext_{\MHMs(X)}^\bullet(-,-) \to \Ext_{\MHMso(X)}^\bullet(\beta({-}),\beta({-}))$.  Since they agree on the $0$th terms, and since $\Ext_{\MHMs(X)}^\bullet(-,-)$ is universal, we have $\hbeta \circ \hexs \circ \rsc = \beta$, as desired.
\end{proof}

\begin{figure}
\[
\xymatrix@C=0pt{
\Hom_{\MHMs(X)}({-},{-}) \ar@{=}[dr]^{\rsc} \ar[ddd]_{\beta} \\
& \Hom_{\MHM(X)}({-},{-}) \ar[d]^{\hexs} \\
& \gHom({-},{-}) \ar[dl]^{\hbeta} \\
\Hom_{\MHMso(X)}(\beta(-),\beta(-))}
\]
\caption{Some left-exact functors on $\MHMs(X)$}\label{fig:lex}
\end{figure}

For convenience, let us restate Lemma~\ref{lem:extn-surj} for $\MHMs(X)$, using~\eqref{eqn:rsc-equiv-transf}.

\begin{lem}\label{lem:extdsm-surj}
Let $\cF, \cG \in \MHMs(X)$ be pure objects of weights $n$ and $n-i$, respectively.  The product map
\[
\bigoplus_{\substack{\cK \in \Irr(\MHMs(X))\\ \wt(\cK) = n - 1}}
\Ext_{\MHM(X)}^1(\cF, \cK) \otimes \Ext_{\MHM(X)}^{i-1}(\cK, \cG) \to \Ext_{\MHM(X)}^i(\cF, \cG).
\]
is surjective. \qed
\end{lem}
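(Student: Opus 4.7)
The plan is essentially a direct translation. Since the statement is explicitly flagged as a restatement of Lemma~\ref{lem:extn-surj} via~\eqref{eqn:rsc-equiv-transf}, the only real work is bookkeeping. First I would apply Lemma~\ref{lem:extn-surj} to the mixed abelian category $\cM = \MHMs(X)$: all the required structure (finite length, a weight function on simples, the vanishing~\eqref{eqn:ab-mixed}) was verified in Section~\ref{subsect:mixedhodge}. This yields at once the surjectivity of the product map
\[
\bigoplus_{\substack{\cK \in \Irr(\MHMs(X))\\ \wt(\cK) = n - 1}}
\Ext_{\MHMs(X)}^1(\cF, \cK) \otimes \Ext_{\MHMs(X)}^{i-1}(\cK, \cG) \to \Ext_{\MHMs(X)}^i(\cF, \cG),
\]
where the $\Ext$-groups are computed in the abelian category $\MHMs(X)$ (equivalently, in $\Db\MHMs(X)$).

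Next I would invoke Proposition~\ref{prop:rsc-equiv}, which says that the realization functor $\rsc: \Db\MHMs(X) \to \dsm(X)$ is an equivalence of triangulated categories. Combined with~\eqref{eqn:dsm-hom}, it gives the natural isomorphism~\eqref{eqn:rsc-equiv-transf}, namely $\rsc: \Ext^i_{\MHMs(X)}(\cF,\cG) \simto \Ext^i_{\MHM(X)}(\cF,\cG)$ for all $i$ and all $\cF,\cG \in \MHMs(X)$. Under this isomorphism the surjection above transports to the asserted one.

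The one point to check is that the Yoneda product on the two sides matches under $\rsc$; this is the nearest thing to an obstacle, but it is automatic. On both sides the Yoneda product is realized as composition of morphisms $\cF \to \cK[1]$ and $\cK[1] \to \cG[i]$ in the respective derived categories, and since $\rsc$ is a functor of triangulated categories that restricts to the identity on $\MHMs(X)$, it preserves shifts and compositions, hence intertwines the two product maps. This completes the proof.
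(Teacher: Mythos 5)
Your proof is correct and is exactly the route the paper intends: the paper gives no separate argument, simply declaring the lemma to be Lemma~\ref{lem:extn-surj} applied to the mixed category $\MHMs(X)$ and transported along the isomorphism~\eqref{eqn:rsc-equiv-transf} coming from Proposition~\ref{prop:rsc-equiv} and~\eqref{eqn:dsm-hom}. Your extra check that $\rsc$ intertwines the Yoneda products (being a triangulated functor restricting to the identity on the heart) is the right observation and is implicit in the paper.
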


\begin{lem}\label{lem:ghom-surj}
Let $\cF, \cG \in \MHMs(X)$ be pure objects of weights $n$ and $n-i$, respectively.  The product map
\[
\bigoplus_{\substack{\cK \in \Irr(\MHMs(X))\\ \wt(\cK) = n - 1}}
\gHom^1(\cF, \cK) \otimes \gHom^{i-1}(\cK, \cG) \to \gHom^i(\cF, \cG).
\]
is surjective.
\end{lem}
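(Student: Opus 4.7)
The plan is to transfer Lemma~\ref{lem:extdsm-surj} across the natural transformation $\hexs$, exploiting the fact that the weight hypotheses here are exactly the ones that make $\hexs$ an isomorphism. Specifically, for $\cK \in \Irr(\MHMs(X))$ pure of weight $n-1$, Lemma~\ref{lem:hom-semis} shows that each of
\[
\uHom^1(\cF,\cK), \quad \uHom^{i-1}(\cK,\cG), \quad \uHom^i(\cF,\cG)
\]
is a direct sum of copies of $\uF_\pt$, hence pure of weight $0$. Consequently $\gHom^j = \exs \uHom^j$ in each of these three degrees, and Lemma~\ref{lem:ghom-basic}\eqref{it:ghom-hexs} supplies isomorphisms
\[
\hexs: \Ext^j_{\MHM(X)}(\cA, \cB) \simto \gHom^j(\cA, \cB)
\]
for the three pairs $(\cA, \cB, j) \in \{(\cF,\cK,1),\,(\cK,\cG,i-1),\,(\cF,\cG,i)\}$.

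The next step is to verify that $\hexs$ intertwines the Yoneda product on the $\Ext$-groups with the composition product on the $\gHom$-groups. The product on the $\gHom$ side arises by applying the exact functor $\exs$ to the composition pairing $\uHom^{i-1}(\cK,\cG)\otimes\uHom^1(\cF,\cK)\to\uHom^i(\cF,\cG)$ in $\MHM(\pt)$, which is itself induced by composition of morphisms in $\Db\MHM(X)$ via $Ra_*R\mathscr{H}\mathit{om}$. In the pure-of-weight-$0$ situation, $\hexs$ is precisely the (now isomorphic) second map in~\eqref{eqn:hodge-ses}, i.e., the edge map of the standard spectral sequence $\HH^p(\uHom^q(\cF,\cG)) \Rightarrow \Hom^{p+q}(\cF,\cG)$; and such edge maps commute with Yoneda products because both sides come, in the end, from composition in $\Db\MHM(X)$.

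Granting that multiplicativity, the conclusion is immediate: given $\phi \in \gHom^i(\cF,\cG)$, lift it to $\tilde\phi \in \Ext^i_{\MHM(X)}(\cF,\cG)$ via $\hexs^{-1}$; apply Lemma~\ref{lem:extdsm-surj} to write $\tilde\phi = \sum_j \tilde g_j \cdot \tilde f_j$ with $\tilde f_j \in \Ext^1_{\MHM(X)}(\cF,\cK_j)$ and $\tilde g_j \in \Ext^{i-1}_{\MHM(X)}(\cK_j,\cG)$ for pure $\cK_j$ of weight $n-1$; then $\phi = \sum_j \hexs(\tilde g_j)\cdot\hexs(\tilde f_j)$ exhibits $\phi$ in the image of the product map.

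The main obstacle is the (otherwise unglamorous) compatibility of $\hexs$ with Yoneda products. The essential point is that both the Yoneda product on $\Ext^\bullet_{\MHM(X)}$ and the composition pairing on $\uHom^\bullet$ originate from composition of morphisms in $\Db\MHM(X)$, so the check is a bookkeeping exercise with the constructions of $\uHom^\bullet$ and the edge-map sequence~\eqref{eqn:hodge-ses}. Alternatively, one could simply \emph{define} the product on $\gHom$ in the cases relevant to the lemma by transport along $\hexs$, in which case the multiplicativity holds by fiat and the lemma is formally a restatement of Lemma~\ref{lem:extdsm-surj}.
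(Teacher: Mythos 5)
Your proposal is correct and matches the paper's argument: the paper likewise observes that the purity hypotheses make $\hexs$ an isomorphism (Lemma~\ref{lem:ghom-basic}\eqref{it:ghom-hexs}) in all three relevant degrees and then deduces the statement directly from Lemma~\ref{lem:extdsm-surj}. The compatibility of $\hexs$ with the product maps, which you single out as the main obstacle, is treated as immediate in the paper; your sketch of why it holds (both products arise from composition in $\Db\MHM(X)$, with $\hexs$ the edge map of~\eqref{eqn:hodge-ses}) is a reasonable filling-in of that implicit step.
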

\begin{proof}
Under the assumptions on the weights of the objects appearing in this statement, we know from Lemma~\ref{lem:ghom-basic}\eqref{it:ghom-hexs} that the map $\hexs$ is an isomorphism.  So this statement follows immediately from Lemma~\ref{lem:extdsm-surj}.
\end{proof}

\begin{lem}\label{lem:homkb-surj}
Let $\cF, \cG \in \MHMso(X)$ be pure objects of weights $n$ and $n-i$, respectively.  The product map
\[
\bigoplus_{\substack{\cK \in \Irr(\MHMso(X))\\ \wt(\cK) = n - 1}}
\Hom_{\Kb\Pure(X)}^1(\cF, \cK) \otimes \Hom_{\Kb\Pure(X)}^{i-1}(\cK, \cG) \to \Hom_{\Kb\Pure(X)}^i(\cF, \cG)
\]
is surjective.
\end{lem}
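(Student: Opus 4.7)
The plan is to reduce the statement to Lemma~\ref{lem:extdsm-surj} by identifying the $\Hom$-groups in $\Kb\Pure(X)$ that appear here with $\Ext$-groups in $\MHM(X)$, and observing that the Yoneda composition is preserved under this identification.

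First, using Proposition~\ref{prop:winnow-beta}, I would pass through the equivalence $\beta$ on pure subcategories and write $\cF = \beta\cF'$, $\cG = \beta\cG'$ with $\cF', \cG' \in \MHMs(X)$ pure of weights $n$ and $n-i$; likewise every simple $\cK \in \MHMso(X)$ of weight $n-1$ is of the form $\beta\cK'$ for some simple $\cK' \in \MHMs(X)$ of weight $n-1$. The case $i = 1$ is immediate, since then $\Hom^0_{\Kb\Pure(X)}(\beta\cK', \beta\cG')$ vanishes unless $\cK' \cong \cG'$, in which case the product map is essentially the identity; so assume $i \geq 2$.

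The key observation is that for any pure objects $\cP, \cQ \in \MHMs(X)$ of weights $p$ and $q$, the complex $\beta\cP \in \Kb\Pure(X)$ is concentrated in cohomological degree $-p$ with entry $\cP[-p] \in \Pure(X)$, and for $j = p - q$ the shifted complex $\beta\cQ[j]$ is concentrated in the \emph{same} cohomological degree $-p$. Hence no nonzero homotopies are possible, and
\[
\Hom^j_{\Kb\Pure(X)}(\beta\cP, \beta\cQ) = \Hom_{\Pure(X)}(\cP[-p], \cQ[-q]) = \Hom^j_{\dsm(X)}(\cP, \cQ) = \Ext^j_{\MHM(X)}(\cP, \cQ),
\]
using that $\Pure(X)$ is a full additive subcategory of $\dsm(X)$ together with~\eqref{eqn:dsm-hom}.

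Applying this identification to the pairs $(\cF', \cK')$, $(\cK', \cG')$, and $(\cF', \cG')$ (with $j$ equal to $1$, $i-1$, and $i$ respectively), the product map in question becomes
\[
\bigoplus_{\substack{\cK' \in \Irr(\MHMs(X)) \\ \wt(\cK') = n-1}} \Ext^1_{\MHM(X)}(\cF', \cK') \otimes \Ext^{i-1}_{\MHM(X)}(\cK', \cG') \to \Ext^i_{\MHM(X)}(\cF', \cG').
\]
Since each identification above is merely a cohomological shift of complexes, Yoneda composition is preserved, and the surjectivity of this last map is exactly Lemma~\ref{lem:extdsm-surj}. The main obstacle is verifying the collapse of the $\Kb\Pure(X)$ $\Hom$-group to a single group of morphisms in $\Pure(X)$; this is where the purity of $\cF, \cG$ and the weight hypothesis on $\cK$ are essential, since they guarantee that $\beta\cP$ and $\beta\cQ[j]$ live in a common cohomological position.
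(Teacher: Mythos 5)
Your proposal is correct and follows essentially the same route as the paper: the weight hypotheses force each of the three $\Hom$-groups to be computed by chain maps between complexes concentrated in a single common degree, i.e.\ by morphisms in (a shift of) $\Pure(X)$, which via the full inclusion $\Pure(X) \subset \dsm(X)$ and~\eqref{eqn:dsm-hom} are identified, compatibly with composition, with $\Ext$-groups over $\MHM(X)$, so that Lemma~\ref{lem:extdsm-surj} applies. Your separate treatment of $i=1$ is harmless but unnecessary, since the same identification works there too.
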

\begin{proof}
The assumptions on the weights of the objects above mean that the $\Hom$-groups above can be computed in (a shift of) the additive category $\Pure(X)$.  Via the inclusion $\Pure(X) \to \dsm(X)$, these $\Hom$-groups can be identified with $\Ext$-groups over $\MHM(X)$, so this result follows from Lemma~\ref{lem:extdsm-surj}.
\end{proof}

\begin{lem}\label{lem:rso-hbeta-isom}
For all $\cF,\cG \in \MHMs(X)$ and all $i \ge 0$, the map
\begin{equation}\label{eqn:rso-hbeta-isom-comp}
\rso\circ\hbeta: \gHom^i(\cF,\cG) \to \Hom^i_{\Kb\Pure(X)}(\beta(\cF),\beta(\cG))
\end{equation}
is an isomorphism.
\end{lem}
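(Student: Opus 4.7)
The plan is to reduce to the case of pure $\cF$ and $\cG$ by a weight-filtration induction, then to identify both sides with $\Ext^i_{\MHM(X)}(\cF,\cG)$ and verify that $\rso\circ\hbeta$ is the natural identification.

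First observe that both $\gHom^\bullet(-,-)$ and $\Hom^\bullet_{\Kb\Pure(X)}(\beta(-),\beta(-))$ are cohomological $\delta$-functors in each variable: for the target, this uses that $\beta$ is exact (Proposition~\ref{prop:winnow-beta}), that $\rso$ is triangulated, and that short exact sequences in the heart $\MHMso(X)$ lift to distinguished triangles in $\Kb\Pure(X)$. Moreover, $\rso\circ\hbeta$ is a morphism of these $\delta$-functors by construction. Given a short exact sequence $0\to\cG'\to\cG\to\cG''\to 0$ in $\MHMs(X)$ (and analogously in the first variable), the five-lemma applied to the associated long exact sequences shows that the desired isomorphism propagates through extensions. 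Writing $\cG$ (respectively $\cF$) as an iterated extension of its weight-graded pieces via the canonical sequence $0\to(-)_{\le k}\to(-)\to(-)_{\ge k+1}\to 0$ and inducting on the length of the weight filtration in each variable reduces the proposition to the case where both $\cF$ and $\cG$ are pure.

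Suppose then $\cF$ is pure of weight $n$ and $\cG$ is pure of weight $m$. The complex $\beta(\cF)$ is concentrated in cohomological degree $-n$ with value $\cF[-n]\in\Pure(X)$, and similarly $\beta(\cG)$ sits in degree $-m$. Inspection of chain maps $\beta(\cF)\to\beta(\cG)[i]$ (noting that all homotopies between such chain maps vanish, as in the proof of Proposition~\ref{prop:winnow-beta}) yields
\[
\Hom^i_{\Kb\Pure(X)}(\beta\cF,\beta\cG)\;=\;
\begin{cases}
\Hom_{\dsm(X)}(\cF[-n],\cG[-m])\;\cong\;\Ext^i_{\MHM(X)}(\cF,\cG) & \text{if } i=n-m,\\
0 & \text{otherwise.}
\end{cases}
\]
On the source side, Lemma~\ref{lem:hom-semis} shows that $\uHom^i(\cF,\cG)$ is pure of weight $m-n+i$, so $\gHom^i(\cF,\cG)=\exs\gr_0\uHom^i(\cF,\cG)$ vanishes unless $i=n-m$, in which case Lemma~\ref{lem:ghom-basic}\eqref{it:ghom-hexs} identifies it with $\Ext^i_{\MHM(X)}(\cF,\cG)$ via $\hexs$. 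Commutativity of Figure~\ref{fig:delta} gives $\rso\circ\hbeta\circ\hexs\circ\rsc=\rso\circ\beta$, so it remains to check that $\rso\circ\beta:\Ext^{n-m}_{\MHMs(X)}(\cF,\cG)\to\Hom^{n-m}_{\Kb\Pure(X)}(\beta\cF,\beta\cG)$ realizes the natural identification through $\rsc$. For $n=m$ this is immediate from the definition of $\beta$ on morphisms and the fact that $\rso$ restricts to the identity on the heart. For $n>m$, an induction on $i=n-m$ employing the surjectivity Lemmas~\ref{lem:ghom-surj} and~\ref{lem:homkb-surj} — which present both sides as Yoneda products over weight-$(n-1)$ intermediate simples — together with the compatibility of $\rso\circ\beta$ with these products, completes the argument.

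The main obstacle is this final compatibility check: verifying that $\rso\circ\beta$ respects Yoneda composition in a way that propagates the base-case $i=0$ isomorphism to all higher $i$. This is essentially a naturality statement for the realization functor $\rso$ with respect to the triangulated structure underlying the surjective product maps of Lemmas~\ref{lem:ghom-surj} and~\ref{lem:homkb-surj}, and it must be tracked carefully through the definitions of $\hbeta$ (via $\gr_n$ at each cohomological degree) and the distinguished triangles in $\Kb\Pure(X)$ coming from extensions in $\MHMso(X)$.
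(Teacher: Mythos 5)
Your overall strategy is the same as the paper's: show both sides vanish unless $i = \wt(\cF)-\wt(\cG)$, identify both with $\Ext^i_{\MHM(X)}(\cF,\cG)$ in the remaining case, and propagate the isomorphism upward in $i$ using the surjective product maps of Lemmas~\ref{lem:ghom-surj} and~\ref{lem:homkb-surj} (the worry you flag about compatibility with Yoneda products is discharged exactly as you suspect: the square of product maps commutes for routine reasons, surjectivity of the horizontals plus your already-established equality of finite dimensions then forces the right-hand vertical to be an isomorphism, so no delicate tracking is needed).

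However, there is a genuine gap at the base of your induction. You only establish the case $i=0$ directly, and the inductive step cannot produce $i=1$: in the product diagram for $i=1$ (and indeed for every $i\ge 2$), the left-hand vertical map contains the tensor factor $\gHom^1(\cF,\cK) \to \Hom^1_{\Kb\Pure(X)}(\beta\cF,\beta\cK)$ with $\wt(\cK)=\wt(\cF)-1$, which is precisely an instance of the statement being proved at $i=1$; using it there is circular. The degree-one case is not automatic and is where the paper invokes Proposition~\ref{prop:winnow-beta}: for $\cF$, $\cG$ pure of adjacent weights $n$ and $n-1$, both lie in $\MHMs(X)_{[n-1,n]}$, the relevant $\Ext^{\le 1}$-groups may be computed in the Serre subcategories $\MHMs(X)_{[n-1,n]}$ and $\MHMso(X)_{[n-1,n]}$, and $\beta$ restricts to an equivalence between these, so $\beta$ is an isomorphism on $\Ext^0$ and $\Ext^1$; combining this with the isomorphisms \eqref{eqn:ext1-real} for both $\rsc$ and $\rso$, and with $\hexs$ being an isomorphism in these weights, gives the cases $i\le 1$. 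Your proof needs this (or an equivalent) argument inserted as the base case; with it, the rest of your induction goes through as in the paper.
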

\begin{proof}
By a standard d\'evissage argument, it suffices to prove this when $\cF$ and $\cG$ are simple, so let us assume that that is the case.  If $\wt(\cG) \ne \wt(\cF) -i$, then both groups above vanish (by Lemma~\ref{lem:ghom-basic} and~\eqref{eqn:orlov-mixed}, respectively), so the map is trivially an isomorphism.  We henceforth assume that $\wt(\cG) = \wt(\cF) - i$.

Assume first that $i = 0$ or $i = 1$.  Let $m = \wt(\cG)$.  Then $\cF$ and $\cG$ both belong to $\MHMs(X)_{[m,m+1]}$.  We can compute $\Ext^i_{\MHMs(X)}(\cF,\cG)$ inside the Serre subcategory $\MHMs(X)_{[m,m+1]}$, and likewise for $\Ext^i_{\MHMso(X)}(\beta\cF,\beta\cG)$ and $\MHMso(X)_{[m,m+1]}$.  By Proposition~\ref{prop:winnow-beta}, the restriction of $\beta$ to these subcategories is an equivalence, so we have that
\begin{align*}
\beta&: \Ext^i_{\MHMs(X)}(\cF,\cG) \to \Ext^i_{\MHMso(X)}(\beta\cF,\beta\cG)
&&\text{is an isomorphism for $i \le 1$.}
\end{align*}
By Lemma~\ref{lem:ghom-basic}\eqref{it:ghom-hexs}, the map $\hexs: \Ext^i_{\MHM(X)}(\cF,\cG) \to \gHom^i(\cF,\cG)$ is an isomorphism as well.  Combining these observations with the commutativity of Figure~\ref{fig:delta} and with~\eqref{eqn:ext1-real} for $\rso$ and $\rsc$, we see that~\eqref{eqn:rso-hbeta-isom-comp} is an isomorphism for $i \le 1$.

We now proceed by induction on $i$.  Consider the commutative diagram
\[
\vcenter{\xymatrix@C=15pt{
\displaystyle\bigoplus\gHom^1(\cF, \cK) \otimes \gHom^{i-1}(\cK, \cG) \ar[r] \ar[d]_{\rso \circ \beta} &
  \gHom^i(\cF, \cG) \ar[d]^{\rso \circ \beta} \\
\displaystyle\bigoplus \Hom^1_{\Kb\Pure(X)}(\beta\cF,\beta\cK) \otimes \Hom^{i-1}_{\Kb\Pure(X)}(\beta\cK,\beta\cG) \ar[r]  &
  \Hom^i_{\Kb\Pure(X)}(\beta\cF,\beta\cG) }}
\]
where both direct sums range over all simple objects $\cK \in \MHMs(X)$ with $\wt(\cK) = \wt(\cF) - 1$.  Both horizontal maps are surjective, by Lemmas~\ref{lem:ghom-surj} and~\ref{lem:homkb-surj}.  The left-hand vertical map is an isomorphism by induction, so the right-hand vertical map is at least surjective.  Using Lemma~\ref{lem:ghom-basic}\eqref{it:ghom-hexs} and our assumptions on weights, we have
\begin{multline*}
\gHom^i(\cF,\cG) \cong \Ext^i_{\MHM(X)}(\cF,\cG)\\ \cong \Hom_{\Pure(X)[\wt(\cF)]}(\cF,\cG[i]) \cong \Hom_{\Kb\Pure(X)}(\beta\cF,\beta\cG[i]).
\end{multline*}
Since $\gHom^i(\cF,\cG)$ and $\Hom^i_{\Kb\Pure(X)}(\beta\cF,\beta\cG)$ have the same dimension, the right-hand vertical map must also be an isomorphism.
\end{proof}  

\begin{prop}\label{prop:qhered}
Consider the objects $\Delta_s^\win = \beta(\Delta_s)$ and $\nabla_s^\win = \beta(\nabla_s)$.
\begin{enumerate}
\item If $X_t \subset \overline{X_s}$, then for all $n \in \Z$, we have\label{it:qh-ext1}
\[
\Ext_{\MHMso(X)}^1(\Delta_s^\win,\MC_t(n)) = \Ext_{\MHMso(X)}^1(\MC_t(n),\nabla_s^\win) = 0.
\]
\item For all $s,t \in S$ and all $n \in \Z$, we have\label{it:qh-ext2}
\[
\Ext_{\MHMso(X)}^2(\Delta_s^\win, \nabla_t^\win(n)) = 0.
\]
\item $\MHMso(X)$ has enough projectives (resp.~injectives), and each projective (resp.~injective) has a filtration by objects of the form $\Delta_s^\win(n)$ (resp.~$\nabla_s^\win(n)$).\label{it:qh-proj}
\item $\MHMso(X)$ has finite cohomological dimension.\label{it:qh-cohom}
\end{enumerate}
\end{prop}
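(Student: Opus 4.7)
The plan is to establish (1) and (2) by translating the required vanishings into computations in $\Pervs(X)$ via Figure~\ref{fig:delta} and Lemma~\ref{lem:rso-hbeta-isom}, and then to deduce (3) and (4) via the standard quasi-hereditary machinery, for which (1) and (2) furnish the key Ext-vanishings.

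For (1), by~\eqref{eqn:ext1-real} the functor $\rso$ is an isomorphism on $\Ext^1$, and Lemma~\ref{lem:rso-hbeta-isom} identifies $\Ext^1_{\MHMso(X)}(\Delta_s^\win, \MC_t(n))$ with $\gHom^1(\Delta_s, \MC_t(n)) = \exs \gr_0 \uHom^1(\Delta_s, \MC_t(n))$. Since $\exs$ is conservative, the claim reduces to showing $\Ext^1_{\Pervs(X)}(\Delta_s, \IC_t) = 0$ when $X_t \subseteq \overline{X_s}$, which is a standard adjunction computation: if $X_t \subsetneq \overline{X_s}$ then $j_s^! \IC_t = 0$ (since $X_s \cap \overline{X_t} = \emptyset$), while if $X_s = X_t$ the group reduces to $H^1(X_s; \F) = 0$ by contractibility of $X_s \cong \C^{\dim X_s}$. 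The dual claim about $\nabla_s^\win$ follows from $\Hom^\bullet(\IC_t, \nabla_s) \cong \Hom^\bullet(j_s^* \IC_t, \uF_{X_s})$. For (2), similarly~\eqref{eqn:ext2-real} makes $\rso$ injective on $\Ext^2$, so it suffices to show $\gHom^2(\Delta_s, \nabla_t(n)) = 0$. Adjunction and base change reduce $\uHom^\bullet(\Delta_s, \nabla_t(n))$ to a computation involving $j_t^* j_{s!} \uF_{X_s}$, which vanishes for $s \neq t$ and for $s = t$ gives an object concentrated in a single degree by contractibility of $X_s$; in particular $\uHom^i$ (and hence $\gHom^i$) vanishes for all $i \ge 1$.

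For (3), I would transport $\exs$-projectives from $\MHMs(X)$ through $\beta$. If $\cP \in \MHMs(X)$ is $\exs$-projective, then $\uHom^i(\cP,-) = 0$ for $i \ge 1$, hence $\gHom^i(\cP,-) = 0$; combining with~\eqref{eqn:ext1-real} and Lemma~\ref{lem:rso-hbeta-isom} gives $\Ext^1_{\MHMso(X)}(\beta\cP, L) = 0$ for each simple $L \in \MHMso(X)$, and d\'evissage through the finite length of objects of $\MHMso(X)$ extends this to arbitrary $\cG$, so $\beta\cP$ is projective. To produce a $\Delta^\win$-filtration, I would use the fact that $\cP$ itself admits a filtration with subquotients of the form $\Delta_s(m)$ in $\MHMs(X)$ (the standard consequence of the quasi-hereditary structure, proved inductively on strata using part~(1)); exactness of $\beta$ transports this to a $\Delta^\win$-filtration of $\beta\cP$. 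The dual argument using $\exs$-injectives handles injectives with $\nabla^\win$-filtrations. Part~(4) then follows from part~(3) combined with Proposition~\ref{prop:pseudoproj}, which provides uniformly finite-length $\exs$-projective resolutions in $\MHMs(X)$; applying $\beta$ yields finite-length projective resolutions in $\MHMso(X)$.

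The main obstacle I anticipate is the $\Delta$-filtration step for $\exs$-projectives in $\MHMs(X)$: this is the only place where geometric input beyond parts~(1) and~(2) is used, and while it is essentially standard in the perverse-sheaf setting (a consequence of the quasi-hereditariness of $\Pervs(X)$), the lift to mixed Hodge modules requires an inductive argument over the stratification, carefully respecting the weight structure and using the Ext-vanishings already in hand.
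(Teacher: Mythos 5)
Your treatment of parts~\eqref{it:qh-ext1} and~\eqref{it:qh-ext2} is essentially the paper's own argument: reduce via~\eqref{eqn:ext1-real}/\eqref{eqn:ext2-real} and Lemma~\ref{lem:rso-hbeta-isom} to the vanishing of $\gHom^1(\Delta_s,\MC_t(n))$, $\gHom^1(\MC_t(n),\nabla_s)$ and $\gHom^2(\Delta_s,\nabla_t(n))$, which are subquotients of the $\uHom$-groups you compute by adjunction; this part is fine. For parts~\eqref{it:qh-proj} and~\eqref{it:qh-cohom}, however, you diverge from the paper, and your route has a genuine gap. The paper does \emph{not} transport projectives from $\MHMs(X)$: it observes that~\eqref{it:qh-ext1} and~\eqref{it:qh-ext2} are exactly the graded versions of axioms (4) and (6) of~\cite[\S3.2]{bgs}, checks the remaining axioms (finiteness of the relevant $\Ext^1$'s holds in $\MHMso(X)$ by Lemma~\ref{lem:rso-hbeta-isom} and Proposition~\ref{prop:dimhomfin}), and then quotes~\cite[Theorem~3.2.1, Corollary~3.2.2]{bgs} wholesale, which deliver enough projectives with $\Delta^\win$-filtrations and finite cohomological dimension in one stroke. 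Note that this axiomatic construction could \emph{not} be run inside $\MHMs(X)$ itself, since $\Ext^1$-groups between simples there may be infinite-dimensional (e.g.\ $\Ext^1_{\MHM(\pt)}(\uF,\uF(n))\cong\C/(2\pi i)^n\F$); working in the winnowed category is precisely what restores the needed finiteness.

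The gap in your version is the step you yourself flag: the claim that every $\exs$-projective $\cP\in\MHMs(X)$ admits a filtration with subquotients $\Delta_s(m)$. As stated it is unproved, and the proposed mechanism (``inductively on strata using part~(1)'') cannot work, because part~(1) is a statement about $\Ext^1$ in $\MHMso(X)$ and gives no handle on the structure of an object of $\MHMs(X)$; nor can one imitate the universal-extension construction of~\cite[\S3.2]{bgs} in $\MHMs(X)$, for the infinite-dimensionality reason above. The claim is in fact true, but the honest proof is a recollement argument (for $j_u$ the inclusion of an open stratum, projectivity of $\exs\cP$ gives an injection $j_{u!}j_u^*\cP\hookrightarrow\cP$ whose cokernel is $i_*$ of an $\exs$-projective on the closed complement, and one inducts on strata), or a citation to the explicit construction in~\cite{schnuerer} rather than just the existence statement quoted in Proposition~\ref{prop:pseudoproj}. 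A secondary point: ``enough projectives'' and part~\eqref{it:qh-cohom} for \emph{all} objects of $\MHMso(X)$ do not follow merely by applying $\beta$, since $\beta$ is not known to be essentially surjective (Proposition~\ref{prop:winnow-beta} only gives this on weight windows of width two). This is repairable---every simple of $\MHMso(X)$ is $\beta$ of a simple, so one covers the top of an arbitrary object by sums of the transported projectives $\beta\cP_t$ and uses a finite-length/radical argument, and one bounds cohomological dimension by d\'evissage to simples, each of which acquires a finite projective resolution from $\beta$ of its finite $\exs$-projective resolution---but these steps need to be said, and your write-up silently assumes objects of $\MHMso(X)$ come from $\MHMs(X)$.
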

\begin{proof}
For part~\eqref{it:qh-ext1}, in view of~\eqref{eqn:ext1-real} for $\rso$ and Lemma~\ref{lem:rso-hbeta-isom}, it suffices to show that $\gHom^1(\Delta_s,\MC_t(n)) = \gHom^1(\MC_t(n),\nabla_s) = 0$.  These groups are subquotients of $\uHom^1(\Delta_s,\MC_t)$ and $\uHom^1(\MC_t,\nabla_s)$, respectively, and the vanishing of the latter is clear.  A similar argument using~\eqref{eqn:ext2-real} and the vanishing of $\uHom^2(\Delta_s,\nabla_t)$ establishes part~\eqref{it:qh-ext2}.

We have just shown that in the category $\MHMso(X)$, the objects $\Delta_s^\win$ and $\nabla_s^\win$ satisfy graded versions of axioms~(4) and~(6) of~\cite[Section~3.2]{bgs}.  The remaining axioms are obvious, so the arguments of that section apply to $\MHMso(X)$.  In particular, part~\eqref{it:qh-proj} is a restatement of~\cite[Theorem~3.2.1]{bgs}, and part~\eqref{it:qh-cohom} is a restatement of~\cite[Corollary~3.2.2]{bgs}.
\end{proof}

\begin{thm}\label{thm:koszul}
$\MHMso(X)$ is a Koszul category.
\end{thm}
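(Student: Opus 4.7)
The plan is to prove, by induction on $i$, that $\Ext^i_{\MHMso(X)}(L, L') = 0$ whenever $L, L' \in \MHMso(X)$ are simple with $\wt(L') \ne \wt(L) - i$.  The case $i = 0$ is trivial; the case $i = 1$ is immediate, since by \eqref{eqn:ext1-real} the realization functor induces an isomorphism $\rso: \Ext^1_{\MHMso(X)}(L, L') \simto \Hom^1_{\Kb\Pure(X)}(L, L')$, and the target vanishes outside the Koszul range by \eqref{eqn:orlov-mixed}.

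For the inductive step $i \ge 2$, I would first establish a dual to Lemma~\ref{lem:extn-surj}: for pure objects $A, B \in \MHMso(X)$ of weights $n$ and $m$ (with no constraint on $m$), the product map
\[
\bigoplus_{\substack{T \text{ simple}\\ \wt(T) = m+1}} \Ext^{i-1}_{\MHMso(X)}(A, T) \otimes \Ext^1_{\MHMso(X)}(T, B) \longrightarrow \Ext^i_{\MHMso(X)}(A, B)
\]
is surjective.  The proof runs the same dévissage pattern as in Lemma~\ref{lem:extn-surj}: write a class $\phi \in \Ext^i(A, B)$ as $\eta[i-1] \circ \xi$ with $\xi \in \Ext^{i-1}(A, K)$ and $\eta \in \Ext^1(K, B)$, and then factor $\eta$ successively through $K_{\ge m+1}$ (possible because $\Ext^1(K_{\le m}, B) = 0$ by the basic mixed condition \eqref{eqn:ab-mixed}) and through $\gr_{m+1}K$ (possible because $\Ext^1(K_{\ge m+2}, B) = 0$, which follows from the Koszul vanishing at $i = 1$ established in the base case).

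With this dual in hand the induction closes immediately.  Given simples $L, L'$ with $\wt(L) = n$ and $\wt(L') = m \ne n - i$, the surjection displays $\Ext^i(L, L')$ as a sum of products whose $\Ext^{i-1}$-factor is indexed by $T$ simple of weight $m + 1$.  Since $m + 1 \ne n - i + 1 = \wt(L) - (i-1)$, the inductive hypothesis at degree $i - 1$ forces $\Ext^{i-1}(L, T) = 0$, so $\Ext^i(L, L') = 0$ as required.

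The principal obstacle is the construction of the dual dévissage.  The original Lemma~\ref{lem:extn-surj} only naturally handles $\wt(L') \ge \wt(L) - i$, and so on its own closes the induction only for the case $\wt(L') > \wt(L) - i$; to cover the symmetric case $\wt(L') < \wt(L) - i$ one must run the dévissage from the $B$ end, and here the one-sided condition \eqref{eqn:ab-mixed} does not supply enough vanishing.  The crucial input is that the Koszul vanishing at $i = 1$ (coming from $\rso$ being an isomorphism in degree one together with \eqref{eqn:orlov-mixed}) is strictly stronger than \eqref{eqn:ab-mixed}, and this extra strength is exactly what allows the dévissage to proceed for all weights $m$ uniformly.
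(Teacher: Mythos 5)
Your base case and your diagnosis of where the difficulty lies (the case $\wt(L') < \wt(L) - i$) are correct, but the inductive step has a genuine gap: the ``dual d\'evissage'' lemma is not established by your sketch, and in fact it cannot be established from the inputs you allow yourself. Concretely, after factoring $\eta$ through the quotient $K_{\ge m+1}$ (which is fine, by \eqref{eqn:ab-mixed}), the object $\gr_{m+1}K$ is a \emph{sub}object of $K_{\ge m+1}$, not a quotient, so a class in $\Ext^1(K_{\ge m+1},B)$ does not ``factor through'' it; the vanishing $\Ext^1(K_{\ge m+2},B)=0$ only says the restriction map $\Ext^1(K_{\ge m+1},B)\to\Ext^1(\gr_{m+1}K,B)$ is injective. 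To replace $K_{\ge m+1}$ by $\gr_{m+1}K$ in the middle of the product you would instead have to lift the class in $\Ext^{i-1}(A,K_{\ge m+1})$ along $\gr_{m+1}K\hookrightarrow K_{\ge m+1}$, which requires $\Ext^{i-1}(A,K_{\ge m+2})=0$; by \eqref{eqn:tri-gen} that holds only when $m\ge \wt(A)-i$, i.e.\ precisely \emph{not} in the regime you are trying to handle (e.g.\ $K_{\ge m+2}$ may have composition factors of weight $\wt(A)-i+1$).

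The failure is not just in the write-up: your argument uses only that $\MHMso(X)$ is a mixed finite-length category satisfying the strong degree-one vanishing coming from \eqref{eqn:ext1-real} and \eqref{eqn:orlov-mixed}, and these hypotheses do not imply Koszulity. For example, finite-dimensional graded modules over $\F[x]/(x^3)$ (with $x$ in degree $1$, weights given by the grading) form a mixed category in which $\Ext^1$ between simples is nonzero only for weight drop exactly $1$, yet $\Ext^2$ is nonzero for weight drop $3$; there your dual surjectivity statement fails outright, since the relevant $\Ext^1(A,T)$ with $\wt(T)=m+1$ vanishes while $\Ext^2(A,B)$ does not. So some input beyond Lemma~\ref{lem:extn-surj}-style homological algebra is indispensable. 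The paper supplies it geometrically: Proposition~\ref{prop:qhered} gives a quasi-hereditary structure on $\MHMso(X)$ (projectives filtered by $\Delta_s^\win$, injectives by $\nabla_s^\win$, finite cohomological dimension), and Lemma~\ref{lem:rso-hbeta-isom} reduces $\Hom^i_{\Kb\Pure(X)}(\Delta_s^\win,\nabla_t^\win(n))$ to $\gHom^i(\Delta_s,\nabla_t(n))$, which vanishes because $\uHom^i(\Delta_s,\nabla_t)=0$ for $i>0$ on an affine even stratification; this shows $\rso\colon \Db\MHMso(X)\to\Kb\Pure(X)$ is an equivalence, and Koszulity then follows from \eqref{eqn:orlov-mixed} via the criterion of Achar--Riche. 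If you want to salvage a direct induction, you must inject this $\Delta$/$\nabla$ vanishing (or an equivalent purity statement) into your d\'evissage; the degree-one vanishing alone will not close it.
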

\begin{proof}
According to~\cite[Proposition~5.8]{ar}, it suffices to show that the realization functor $\rso: \Db\MHMso(X) \to \Kb\Pure(X)$ is an equivalence.  To do this, we use the method of~\cite[Corollary~3.3.2]{bgs}.  By Proposition~\ref{prop:qhered}\eqref{it:qh-cohom}, $\Db\MHMso(X)$ is generated by the projectives and by the injectives in $\MHMso(X)$, so it suffices to show that if $P \in \MHMso(X)$ is projective and $I \in \MHMso(X)$ is injective, then
\begin{equation}\label{eqn:koszul-projinj}
\Hom^i_{\Kb\Pure(X)}(P,I) = 0 \qquad\text{for $i > 0$.}
\end{equation}
In view of Proposition~\ref{prop:qhered}\eqref{it:qh-proj}, we see that~\eqref{eqn:koszul-projinj} would follow if we knew that
\begin{equation}\label{eqn:koszul-stdcostd}
\Hom_{\Kb\Pure(X)}^i(\Delta_s^\win, \nabla_t^\win(n)) = 0\qquad\text{if $i > 0$.}
\end{equation}
By Lemma~\ref{lem:rso-hbeta-isom}, it suffices to show that $\gHom^i(\Delta_s,\nabla_t(n)) = 0$, and this follows from the well-known fact that $\uHom^i(\Delta_s,\nabla_t) = 0$ for $i > 0$.
\end{proof}

Since $\rso$ is an equivalence, Lemma~\ref{lem:rso-hbeta-isom} implies the following.

\begin{cor}\label{cor:hbeta-win}
For all $\cF,\cG \in \MHMs(X)$ and all $i \ge 0$, the natural map $\hbeta: \gHom^i(\cF,\cG) \to \Ext^i_{\MHMso(X)}(\beta(\cF),\beta(\cG))$ is an isomorphism.\qed
\end{cor}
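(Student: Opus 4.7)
The plan is to deduce this immediately from the two preceding results: Lemma~\ref{lem:rso-hbeta-isom}, which states that the composition $\rso \circ \hbeta$ is an isomorphism of $\delta$-functors, and Theorem~\ref{thm:koszul}, whose proof established that $\rso: \Db\MHMso(X) \to \Kb\Pure(X)$ is an equivalence of triangulated categories.

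First I would invoke Theorem~\ref{thm:koszul} to obtain that $\rso$ is an equivalence. A standard fact about realization functors (or simply about equivalences of triangulated categories) is that such an equivalence induces isomorphisms on all $\Hom^i$-groups. Applying this to the objects $\beta(\cF), \beta(\cG) \in \MHMso(X)$ and using~\eqref{eqn:dsm-hom}-style identifications, we conclude that
\[
\rso: \Ext^i_{\MHMso(X)}(\beta(\cF),\beta(\cG)) \to \Hom^i_{\Kb\Pure(X)}(\beta(\cF),\beta(\cG))
\]
is an isomorphism for every $i \ge 0$.

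Second, Lemma~\ref{lem:rso-hbeta-isom} asserts that the composition $\rso \circ \hbeta$ (going through $\Ext^i_{\MHMso(X)}(\beta(\cF),\beta(\cG))$) is an isomorphism. Since the second factor $\rso$ is an isomorphism, the first factor $\hbeta$ must be an isomorphism as well. There is no real obstacle here; the main content is already contained in the lemma and theorem, so this corollary is purely a factorization argument.
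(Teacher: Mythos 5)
Your argument is exactly the paper's: the paper deduces the corollary in one line from the fact (established in the proof of Theorem~\ref{thm:koszul}) that $\rso$ is an equivalence, combined with Lemma~\ref{lem:rso-hbeta-isom}, so that $\hbeta$ is the first factor of an isomorphism whose second factor is an isomorphism. The proposal is correct and takes essentially the same route, with the only cosmetic caveat that the equivalence of $\rso$ comes from the proof of Theorem~\ref{thm:koszul} rather than its bare statement.
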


\section{Koszul gradings on perverse sheaves}
\label{sect:degr}

Unlike mixed Hodge modules in $\MHMs(X)$, objects of the winnowed category $\MHMso(X)$ do not come equipped with a notion of ``underlying perverse sheaf.''  It is a nontrivial task (and the main goal of this section) to construct a functor relating $\MHMso(X)$ to $\Pervs(X)$.  More specifically, we will construct a \emph{degrading functor} $\degr: \MHMso(X) \to \Pervs(X)$ in the sense of~\cite{bgs}.  This means that $\degr$ is exact and enjoys the following properties:
\begin{enumerate}
\item There is a natural isomorphism $\varepsilon: \degr \circ (1) \simto \degr$.
\item We have $\degr(\MC_s) \cong \IC_s$.
\end{enumerate}
We will also prove the following statement relating $\Ext$-groups in the two categories.  In the language of~\cite{bgs}, this says that the pair $(\degr, \varepsilon)$ is a \emph{grading} on $\Pervs(X)$.
\begin{enumerate}\setcounter{enumi}{2}
\item There is a natural isomorphism induced by $\varepsilon$:
\[
\bigoplus_{n \in \Z} \Ext^i(\cF,\cG(n)) \simto \Ext^i(\degr\cF,\degr\cG).
\]
\end{enumerate}
As an application, we will prove a formality result for $\Pervs(X)$ that generalizes one of the main results of~\cite{schnuerer}.  See Corollary~\ref{cor:dg-filt-isom}.

This section relies heavily on the language of ``differential graded graded'' (dgg) rings and modules.  A \emph{dgg-ring} will mean a bigraded ring $\ucE = \bigoplus \ucE^{m,n}$ equipped with a differential of degree $(1,0)$, i.e., a collection of maps $\ucE^{m,n} \to \ucE^{m+1,n}$ satisfying the usual properties (see, e.g.,~\cite[\S 2.2]{schnuerer}).  The term ``internal grading'' will refer to the grading indexed here by ``$n$.''  Thus, the internal grading on a dgg-ring or dgg-module can be forgotten to yield an ordinary dg-ring or dg-module.  For a dgg-ring $\ucE$, we write $\Dp(\rdggmod\ucE)$ for the derived category of perfect right dgg-modules over $\ucE$.

The strategy for the construction of $\degr$ is to first show that $\Db\MHMso(X)$ is described by a certain dgg-ring $\ucE$, and then that $\Db\Pervs(X)$ is described by the dg-ring $\cE$ obtained by forgetting the internal grading on $\ucE$.  The functor $\degr$ will be obtained from the forgetful functor $\For: \Dp(\rdggmod\ucE) \to \Dp(\rdgmod\cE)$.

\subsection{Square root of the Tate twist}
\label{subsect:squareroot}

For the arguments below, the fact that the Tate twist on $\MHMs(X)$ has degree~$2$ rather than degree~$1$ presents something of a technical annoyance.  To remedy this, we use the general construction explained in~\cite[Section~4.1]{bgs}.  Define $\MHMs(X)^\half$ to be the category
\[
\MHMs(X)^\half = \MHMs(X) \oplus \MHMs(X).
\]
We equip it with a ``square root of the Tate twist,'' defined by
\[
(\cF_1,\cF_2)(\half) = (\cF_2(1),\cF_1).
\]
We identify $\MHMs(X)$ with the full subcategory of $\MHMs(X)^\half$ consisting of objects of the form $(\cF,0)$.  The same construction applies to $\MHMso(X)$ as well.  There are functors
\[
\beta: \MHMs(X)^\half \to \MHMso(X)^\half,
\qquad
\exs: \MHMs(X)^\half \to \Pervs(X).
\]
Moreover, all the morphisms in Figure~\ref{fig:delta} extend to objects of $\MHMs(X)^\half$, and Proposition~\ref{prop:qhered} and Theorem~\ref{thm:koszul} hold for $\MHMso(X)^\half$.  For the remainder of the section, we will work with $\MHMs(X)^\half$ and $\MHMso(X)^\half$ rather than with $\MHMs(X)$ and $\MHMso(X)$.

As an example, in the following proof, it is easy to check that the integer $n$ must be even if $\cF \in \MHMs(X)$, but it is even easier to avoid the question.

\begin{lem}\label{lem:pseudo-beta}
An object $\cF \in \MHMs(X)^\half$ is $\exs$-projective if and only if $\beta(\cF)$ is a projective object of $\MHMso(X)^\half$.
\end{lem}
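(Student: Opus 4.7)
The plan is to translate both conditions into vanishing statements for $\uHom^\bullet$ and $\gHom^\bullet$ and then connect them via the weight analysis of Lemma~\ref{lem:hom-semis}.  First I would reduce to the case $\cF \in \MHMs(X) \subset \MHMs(X)^\half$, since both ``$\exs$-projective'' and ``projective in $\MHMso(X)^\half$'' respect the direct-sum decomposition $\MHMs(X)^\half = \MHMs(X) \oplus \MHMs(X)$.  Using Proposition~\ref{prop:qhered}\eqref{it:qh-cohom} together with Proposition~\ref{prop:winnow-beta}, projectivity of $\beta\cF$ is then equivalent to
\[
\Ext^i_{\MHMso(X)}(\beta\cF, \beta L) = 0
\qquad\text{for every simple $L \in \MHMs(X)$ and every $i \ge 1$,}
\]
since Ext against simples lying in the other summand of $\MHMs(X)^\half$ vanishes automatically.

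For the forward direction, assume $\cF$ is $\exs$-projective.  For any $\cG \in \MHMs(X)$ and $i \ge 1$, the realization equivalence~\eqref{eqn:dsp-equiv} gives
\[
\exs\uHom^i(\cF,\cG) \cong \Hom^i_{\dsp(X)}(\exs\cF,\exs\cG) \cong \Ext^i_{\Pervs(X)}(\exs\cF,\exs\cG) = 0.
\]
Faithfulness of $\exs\colon \MHM(\pt) \to \Vect_\F$ then forces $\uHom^i(\cF,\cG) = 0$, hence $\gHom^i(\cF,\cG) = 0$, and Corollary~\ref{cor:hbeta-win} yields $\Ext^i_{\MHMso(X)}(\beta\cF, \beta\cG) = 0$.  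Thus $\beta\cF$ is projective.

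For the converse, assume $\beta\cF$ is projective.  Corollary~\ref{cor:hbeta-win} gives $\gHom^i(\cF,\cG) = 0$ for every $\cG \in \MHMs(X)$ and every $i \ge 1$.  Applying this with $\cG$ replaced by each Tate twist $\cG(n)$ and unwinding $\gr_0(M(n)) = (\gr_{2n}M)(n)$ together with the conservativity of $\exs$, we obtain $\gr_{2n}\uHom^i(\cF,\cG) = 0$ for every $n \in \Z$.  Lemma~\ref{lem:hom-semis}, combined with a short d\'evissage on composition factors of $\cF$ and $\cG$ through the long exact sequences of the $\delta$-functor $\uHom^\bullet$, shows that every composition factor of $\uHom^i(\cF,\cG)$ has the form $\uF(m)$ and thus has even weight.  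Consequently vanishing of all even-weight graded pieces forces $\uHom^i(\cF,\cG) = 0$, and the chain of isomorphisms from the forward direction then yields $\Ext^i_{\Pervs(X)}(\exs\cF, \exs\cG) = 0$.  Letting $\cG$ range over the simples of $\MHMs(X)$ covers every simple of $\Pervs(X)$ via $\MC_s \mapsto \IC_s$, so $\exs\cF$ is projective.

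The main technical point to watch in the converse is the interplay between Tate twists and the weight filtration: sweeping $\cG$ over all Tate twists is exactly what is needed to probe every even-weight graded piece of $\uHom^i(\cF,\cG)$ through the $\gr_0$ in the definition of $\gHom^i$, and Lemma~\ref{lem:hom-semis}'s restriction to even weights is what upgrades that piecewise vanishing to outright vanishing of $\uHom^i(\cF,\cG)$.
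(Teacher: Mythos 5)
Your proof is correct, and its skeleton is the same as the paper's: both directions run through the chain $\Ext^\bullet_{\Pervs(X)}(\exs\cF,\exs({-})) \leftrightarrow \uHom^\bullet(\cF,{-}) \leftrightarrow \gHom^\bullet(\cF,{-}) \leftrightarrow \Ext^\bullet_{\MHMso}(\beta\cF,\beta({-}))$, with Corollary~\ref{cor:hbeta-win} as the bridge. The one place you diverge is the converse: the paper argues contrapositively and, given $\gr_n\uHom^1(\cF,\MC_s)\ne 0$, simply applies the \emph{half} Tate twist to land that graded piece in degree $0$, i.e.\ it tests against $\MC_s(-\nhalf)$ and thereby ``avoids the question'' of whether $n$ is even (as the remark preceding the lemma advertises). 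You instead stay inside $\MHMs(X)$ with integer twists, which only probe the even-weight graded pieces of $\uHom^i(\cF,\cG)$, and then settle the parity question head-on by the d\'evissage observation (already recorded in Section~\ref{subsect:new-delta}) that every composition factor of $\uHom^i(\cF,\cG)$ has the form $\uF(m)$, so that vanishing of the even pieces forces vanishing outright. Both routes work; the paper's is slightly shorter because the half-twist formalism of Section~\ref{subsect:squareroot} was set up precisely to make such parity bookkeeping unnecessary, while yours makes that bookkeeping explicit and is self-contained within the integer-twist category. Your appeal to Proposition~\ref{prop:qhered}\eqref{it:qh-cohom} is not needed (vanishing of $\Ext^1$ against simples already gives projectivity in a finite-length category), but this is harmless.
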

\begin{proof}
If $\cF$ is $\exs$-projective, then $\uHom^1(\cF,{-})$ vanishes, and then by Corollary~\ref{cor:hbeta-win}, so does $\Ext^1(\beta(\cF),{-})$.  If $\cF$ is \emph{not} $\exs$-projective, then there is some $\MC_s$ such that $\uHom^1(\cF,\MC_s) \ne 0$.  In particular, there is some $n$ such that $\gr_n \uHom^1(\cF,\MC_s) \ne 0$, and so $\gHom^1(\cF, \MC_s(-\nhalf)) \ne 0$.  Corollary~\ref{cor:hbeta-win} then shows us that $\beta(\cF)$ is not projective.
\end{proof}

\subsection{Formality for $\MHMso(X)^\half$}
\label{subsect:dgg-formal}

Let $\tilde L = \bigoplus_{s \in S} \MC_s(-\half \dim  X_s)$.  This is pure of weight~$0$, and it contains every simple object of weight~$0$ in $\MHMs(X)^\half$ as a direct summand.  We will also work with the related objects
\[
L = \exs(\tilde L) \cong \bigoplus_{s \in S} \IC_s \in \Pervs(X)
\qquad\text{and}\qquad
L_\win = \beta(\tilde L)  \in \MHMso(X)^\half.
\]
The next step is to choose a $\exs$-projective resolution $\tilde P^\bullet$ of $\tilde L$.  We claim that $\tilde P^\bullet$ may be chosen so that the following additional assumption holds: 
\begin{equation}\label{eqn:linear-resoln}
\text{Every simple quotient of $\tilde P^i$ is pure of weight $i$.}
\end{equation}
Indeed, since $\MHMso(X)^\half$ is already known to be Koszul, we could first choose a linear projective resolution of $L_\win$, and then, using Proposition~\ref{prop:pseudoproj} and Lemma~\ref{lem:pseudo-beta}, lift it term-by-term to $\MHMs(X)$.  We may also assume that $\tilde P^\bullet$ has finitely many nonzero terms, by Proposition~\ref{prop:qhered}\eqref{it:qh-cohom}.  Let us put
\[
P^\bullet = \exs(\tilde P^\bullet)
\qquad\text{and}\qquad
P^\bullet_\win = \beta(\tilde P^\bullet).
\]
Thus, $P^\bullet$ is a projective resolution of $L$ in $\Pervs(X)$, and $P_\win^\bullet$ is a projective resolution of $L_\win$ in $\MHMso(X)^\half$.

Form the dgg-algebra $\ucA$ whose bigraded components are given by
\[
\ucA^{m,n} = \bigoplus_{i + j = m} \Hom(P_\win^{-i}, P_\win^j(\nhalf)),
\]
and whose differentials are induced by those in $P_\win^\bullet$.  Next, for a complex $\cF^\bullet$ of objects in $\MHMso(X)^\half$, let $\grHom^\bullet(P_\win^\bullet,\cF^\bullet)$ be the right dgg-$\ucA$-module with
\[
\grHom^\bullet(P_\win^\bullet,\cF^\bullet)^{m,n} = \bigoplus_{i + j = m} \Hom(P_\win^{-i}, \cF^j(\nhalf)).
\]
The following lemma is a graded version of a standard result on dg-modules over the dg endomorphism ring of a complex of projectives.  See, for example,~\cite[Proposition~7 and Remark~8]{schnuerer}.  We omit the proof.

\begin{lem}\label{lem:dgg-proj-equiv}
The functor $\grHom^\bullet(P_\win^\bullet,{-}): \Db\MHMso(X)^\half \simto \Dp(\rdggmod\ucA)$ is an equivalence of categories.\qed
\end{lem}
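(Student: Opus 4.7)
The plan is to apply the standard tilting-type equivalence between the bounded derived category of an abelian category with a projective generator and the perfect derived category over the endomorphism algebra of a projective resolution of that generator, keeping track of the extra internal grading throughout. The proof therefore proceeds in four steps, all of which are straightforward adaptations of the (ungraded) argument in \cite{schnuerer}.

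First, I would verify that the shifts and Tate twists of $P_\win^\bullet$ generate the target triangulated category $\Db\MHMso(X)^\half$. Since every simple object of $\MHMso(X)^\half$ is a summand of a Tate twist of $L_\win$, and $P_\win^\bullet$ is a bounded (by Proposition~\ref{prop:qhered}\eqref{it:qh-cohom}) projective resolution of $L_\win$, standard d\'evissage on the weight filtration shows that the collection $\{P_\win^\bullet(n/2)[k] : n,k\in\Z\}$ classically generates $\Db\MHMso(X)^\half$ under cones and summands.

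Second, I would check full faithfulness on the generating collection. Because $P_\win^\bullet$ is a bounded complex of projective objects in $\MHMso(X)^\half$, it is K-projective, so for any $\cF^\bullet \in \Db\MHMso(X)^\half$ there is a natural isomorphism
\[
\Hom_{\Db\MHMso(X)^\half}\bigl(P_\win^\bullet, \cF^\bullet(\nhalf)[k]\bigr) \cong H^k\bigl(\grHom^\bullet(P_\win^\bullet,\cF^\bullet)_{\bul,n}\bigr).
\]
Applying this with $\cF^\bullet = P_\win^\bullet(\tfrac{m}{2})[j]$ and matching with the $\Hom$-groups of shifts and internal-grading twists of $\ucA$ over itself, one sees that the functor induces isomorphisms on $\Hom$-groups between all the generating objects. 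Fully faithfulness on the full subcategory generated by the shifts and twists of $P_\win^\bullet$ then follows by the usual induction on triangles (both source and target are triangulated, and the functor is exact by construction).

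Third, I would identify the essential image. The image of $P_\win^\bullet$ is (up to an obvious shift convention) the free rank-one module $\ucA$, and the images of its shifts and Tate twists are the bigraded shifts of $\ucA$. By definition, $\Dp(\rdggmod\ucA)$ is the thick subcategory generated by these, so the image of the functor generates $\Dp(\rdggmod\ucA)$. Combined with full faithfulness and the fact that fully faithful exact functors of triangulated categories have thick essential image, this forces the functor to be essentially surjective onto $\Dp(\rdggmod\ucA)$.

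The only genuine delicacy is the bookkeeping between homological and internal degrees: one must verify that the Tate half-twist $(\half)$ matches a shift of $1$ in the internal grading $n$ of $\ucA$, which is precisely why the formalism of Section~\ref{subsect:squareroot} was introduced. Once this is set up correctly, the remainder of the argument is entirely parallel to \cite[Proposition~7, Remark~8]{schnuerer}, and that is exactly why the authors omit the proof.
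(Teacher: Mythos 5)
Your proposal is correct and is exactly the standard graded tilting argument that the paper omits by citing \cite[Proposition~7 and Remark~8]{schnuerer}: generation of $\Db\MHMso(X)^\half$ by shifts and half-Tate twists of $P_\win^\bullet$, the $\Hom$-comparison via K-projectivity of a bounded complex of projectives, and identification of the essential image with the thick subcategory generated by the bigraded shifts of the free module $\ucA$. The only point worth stating explicitly is that the thickness of the essential image uses idempotent completeness of $\Db\MHMso(X)^\half$ (which holds, as it carries a bounded $t$-structure with abelian heart); otherwise your bookkeeping of the internal degree against the half-twist matches the paper's conventions.
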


Next, let $\ucE$ be the bigraded ring $H^\bullet(\ucA)$.  We have
\[
\ucE^{m,n} = \Ext^m(L_\win,L_\win(\nhalf)).
\]
By Theorem~\ref{thm:koszul}, $\ucE$ is ``pure,'' meaning that $\ucE^{m,n}$ vanishes unless $m = n$.  It is well known that a dgg-algebra whose cohomology is pure is \emph{formal}, i.e., quasi-isomorphic as a dg-ring to its cohomology.  In our case, we can be a bit more specific.  Since $\ucA^{m,n} = 0$ for $n > m$, the cohomology group $\ucE^{m,m}$ is naturally a subspace of $\ucA^{m,m}$.  In other words, $\ucE$ can be identified with a sub-dgg-ring of $\ucA$.  The following lemma, whose proof we omit, is similar to~\cite[Proposition~4]{schnuerer}; the idea goes back to Deligne~\cite{del:weil}.

\begin{lem}\label{lem:dgg-formal}
The functor $\Dp(\rdggmod\ucA) \simto \Dp(\rdggmod\ucE)$ induced by the inclusion $\ucE \to \ucA$ is an equivalence of categories. \qed
\end{lem}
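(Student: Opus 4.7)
The plan is to realize the map $\ucE \to \ucA$ as an inclusion of dgg-rings that is a quasi-isomorphism, and then invoke the standard principle that a quasi-isomorphism of dg-rings induces an equivalence on derived categories of perfect modules, applied strand-by-strand in the internal grading.

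First, one must verify that $\ucE$, identified with the bigraded subspace of $\ucA$ spanned in bidegree $(m,m)$ by $\ker(d: \ucA^{m,m} \to \ucA^{m+1,m})$, is a sub-dgg-ring. Multiplicative closure is the Leibniz rule: for cocycles $a \in \ucA^{m,m}$ and $b \in \ucA^{m',m'}$, the product $ab$ lies in $\ucA^{m+m',m+m'}$ and is again a cocycle. Closure under the differential is automatic, since $d$ vanishes on $\ucE$ by construction; this is the concrete manifestation of the fact that purity forces the induced differential on $H^\bullet(\ucA)$ to be zero. Finally, the identification $\ucE^{m,m} = \ker(d: \ucA^{m,m} \to \ucA^{m+1,m})$ is correct because coboundaries in bidegree $(m,m)$ would have to come from $\ucA^{m-1,m}$, which vanishes by the hypothesis $\ucA^{p,q}=0$ for $q>p$; off the diagonal, $\ucE$ vanishes by purity, so there is nothing to check.

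Second, the inclusion $\iota: \ucE \hookrightarrow \ucA$ is a quasi-isomorphism of dgg-rings: for each fixed internal degree $n$, the subcomplex $\ucE^{\bullet,n} \hookrightarrow \ucA^{\bullet,n}$ realizes the cohomology of the strand (concentrated in cohomological degree $n$, or zero if no such contribution exists), by the previous paragraph combined with purity. Thus $\iota$ induces an isomorphism on cohomology in every bidegree.

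Third, one invokes the standard fact that a quasi-isomorphism of dg-rings induces an equivalence between the derived categories of perfect dg-modules, via restriction (or, equivalently, derived extension) of scalars. The generalization to dgg-rings is routine: the internal grading is simply carried along by all the constructions, and perfectness is preserved. The main obstacle, such as it is, is this last bookkeeping step — checking that derived extension of scalars along $\iota$ preserves perfect dgg-modules and is fully faithful on them — but this is a direct adaptation of the ungraded argument (compare, e.g., the treatment in \cite{schnuerer}), which is why the authors omit the proof.
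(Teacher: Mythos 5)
Your argument is correct and is exactly the one the paper has in mind: the identification of $\ucE$ with the sub-dgg-ring of diagonal cocycles (using $\ucA^{m,n}=0$ for $n>m$ and purity of $\ucE$), the observation that the inclusion is then a quasi-isomorphism, and the standard fact that a quasi-isomorphism of dg(g)-rings induces an equivalence on perfect modules — this is precisely the Deligne/Schn\"urer argument the paper cites while omitting the details. No gaps to report.
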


\subsection{Formality for $\Pervs(X)$}
\label{subsect:filt-formal}

We would now like to prove similar statements for $\Pervs(X)$, but because the relevant dg-ring is only filtered and not bigraded, the formality statement requires considerably more work.

Recall that $P^\bullet$ is a projective resolution of $L \in \Pervs(X)$.  Form the dg-ring $\cA = \End^\bullet(P^\bullet)$.  Explicitly, this is a graded ring whose graded components are given by
\[
\cA^m = \bigoplus_{i + j = m} \Hom(P^{-i}, P^j) \cong \bigoplus_{i+j=n} \exs \uHom(\tilde P^{-i},\tilde P^j),
\]
with differential induced by that in the complex $P^\bullet$ as usual.  Next, for a complex $\cF^\bullet$ of objects in $\Pervs(X)$, consider the right dg-$\cA$-module given by
\[
\Hom^\bullet(P^\bullet,\cF^\bullet)^m = \bigoplus_{i+j = m} \Hom(P^{-i}, \cF^j),
\]
and with differential again defined as usual.  As in Lemma~\ref{lem:dgg-proj-equiv}, we have:

\begin{lem}\label{lem:dg-proj-equiv}
The functor $\Hom^\bullet(P^\bullet,{-}): \Db\Pervs(X) \simto \Dp(\rdgmod\cA)$ is an equivalence of categories.\qed
\end{lem}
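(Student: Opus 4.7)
The plan is to adapt the argument for Lemma~\ref{lem:dgg-proj-equiv} by forgetting the internal grading. Both statements are instances of the standard Morita-style derived equivalence between the perfect derived category of the dg endomorphism ring of a bounded complex of projective generators and the bounded derived category of the ambient abelian category; the template of~\cite[Proposition~7 and Remark~8]{schnuerer} applies directly here in the ungraded setting.

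The crucial input is that $P^\bullet$ is a \emph{projective generator} of $\Db\Pervs(X)$, in the sense that every indecomposable projective of $\Pervs(X)$ occurs as a direct summand of some $P^{-i}$. This holds because $\tilde P^0$, being a $\exs$-projective cover of $\tilde L = \bigoplus_{s \in S} \MC_s(-\half \dim X_s)$, contains a $\exs$-projective cover of each simple object of $\MHMs(X)^\half$ of weight~$0$; and $\exs$ sends $\exs$-projective covers to ordinary projective covers by definition. Combined with the finite cohomological dimension of $\Pervs(X)$ (from the quasi-hereditary structure induced by the affine even stratification, cf.~\cite[\S 3.3]{bgs}), this guarantees that $\Db\Pervs(X)$ is generated as a triangulated category by the indecomposable summands of $P^\bullet$.

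Granted this, full faithfulness is verified by standard dévissage, reducing to the case where $\cF$ and $\cG$ are projective direct summands of $P^\bullet$. For such objects both sides canonically compute $\Hom_{\Pervs(X)}(\cF,\cG)$ in degree~$0$ and vanish in higher degrees, the vanishing on the dg side following because the corresponding dg-$\cA$-modules are direct summands of $\cA$ itself. Essential surjectivity then follows because the indecomposable direct summands of $\cA$ (as a dg-$\cA$-module) lie in the image, and they generate $\Dp(\rdgmod\cA)$ as a triangulated category.

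The one technical point---already treated in~\cite{schnuerer}, and which I expect to be the main obstacle---is the verification that $\Hom^\bullet(P^\bullet,{-})$ lands in $\Dp$ rather than in the full derived category of dg-$\cA$-modules. This is a consequence of the boundedness of $P^\bullet$ together with the projectivity of its terms, which together imply that $\Hom^\bullet(P^\bullet, \cF^\bullet)$ is quasi-isomorphic to a bounded complex of finite-rank summands of free dg-$\cA$-modules for each $\cF^\bullet \in \Db\Pervs(X)$.
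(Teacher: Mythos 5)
Your overall strategy (the standard dg Morita argument, i.e.\ the ungraded version of \cite[Proposition~7 and Remark~8]{schnuerer}, which is exactly what the paper invokes without proof) is the right one, but the full-faithfulness step as you wrote it contains a genuine error. You reduce to $\cF,\cG$ projective summands of the terms of $P^\bullet$ and claim the vanishing of higher $\Hom$'s on the dg side ``because the corresponding dg-$\cA$-modules are direct summands of $\cA$ itself.'' That claim is false: as a right dg-$\cA$-module, $\cA=\End^\bullet(P^\bullet)$ does \emph{not} decompose along the targets $P^j$, since the differential has a component given by postcomposition with $d_P$, which mixes the ``columns'' $\Hom^\bullet(P^\bullet,P^j)$; so $\Hom^\bullet(P^\bullet,\cF)$ for $\cF$ a summand of some $P^{-i}$ is not a dg direct summand of $\cA$ (nor a summand in $\Dp(\rdgmod\cA)$: under the putative equivalence $\cA$ corresponds to $L$, the sum of the \emph{simples}, via the augmentation quasi-isomorphism $\cA\to\Hom^\bullet(P^\bullet,L)$, and a projective object is not a summand of $L$). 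Worse, even if the summand claim were true it would not give what you want: $\Hom_{\Dp(\rdgmod\cA)}(\cA,\cA[n])=H^n(\cA)=\Ext^n(L,L)$, which is nonzero for many $n>0$, so ``summand of $\cA$'' does not imply vanishing of higher $\Hom$'s. Computing $\Hom_{\Dp(\rdgmod\cA)}$ between the images of two projectives directly requires a K-projective (semifree) resolution and is essentially as hard as the theorem itself.

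The fix is to run the d\'evissage on the other set of generators. The functor sends $P^\bullet\simeq L$ to the free module (the augmentation $P^\bullet\to L$ induces a quasi-isomorphism $\cA\to\Hom^\bullet(P^\bullet,L)$ of dg-$\cA$-modules), and for this single generator full faithfulness is essentially tautological: $\Hom_{\Db\Pervs(X)}(L,L[n])=\Ext^n(L,L)=H^n(\cA)=\Hom_{\Dp(\rdgmod\cA)}(\cA,\cA[n])$ for all $n$. Since $\Pervs(X)$ is a finite-length category, $\Db\Pervs(X)$ is generated by $L$ (so you do not even need finite cohomological dimension or the ``all indecomposable projectives occur in $P^\bullet$'' input), Beilinson's lemma gives full faithfulness everywhere, and essential surjectivity follows because the essential image is thick (using idempotent completeness of $\Db\Pervs(X)$) and contains $\cA$, which generates $\Dp(\rdgmod\cA)$ by definition. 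The same route repairs your last paragraph: to see the functor lands in $\Dp(\rdgmod\cA)$, note each simple is a summand of $L$, hence goes to a summand of $\cA$ (this time correctly, via the augmentation quasi-isomorphism), and then finite length plus exactness of $\Hom(P^{-i},-)$ handles arbitrary objects and bounded complexes.
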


Next, let $\cE$ be the graded ring $H^\bullet(\cA)$.  We have
\[
\cE^m = \Ext^m(L,L).
\]
 The complex $\cA$ is naturally equipped with a filtration (the weight filtration on the terms $\uHom(\tilde P^{-i}, \tilde P^j)$), but not with a bigrading.  Thus, in contrast with the situation in the preceding section, $\cE$ cannot readily be identified with a subring of $\cA$, and a more delicate construction is needed to obtain an equivalence of categories.

\begin{lem}\label{lem:filt-formal}
There is a $(\cE,\cA)$-bimodule $M$ such that the functor
\begin{equation}\label{eqn:keller-tensor}
{-} \otimes^L_{\cE} M: \Dp(\rdgmod\cE) \to \Dp(\rdgmod\cA)
\end{equation}
is an equivalence of categories.
\end{lem}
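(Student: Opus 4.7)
The plan is to reduce the lemma to the dgg formality result of Lemma~\ref{lem:dgg-formal}, by comparing $\cA$ and $\cE$ to the bigraded ring $\ucA$ and sub-dgg-ring $\ucE$ from Section~\ref{subsect:dgg-formal} via the functor that forgets the internal grading, and then taking $M = \cA$ with its natural bimodule structure.

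First, I would establish a canonical isomorphism of dg-rings $\cA \cong \bigoplus_n \ucA^{\bullet,n}$.  By Corollary~\ref{cor:hbeta-win},
\[
\Hom_{\MHMso(X)^\half}(P_\win^{-i}, P_\win^j(\nhalf)) \cong \gHom^0(\tilde P^{-i}, \tilde P^j(\nhalf)) \cong \exs\gr_n \uHom^0(\tilde P^{-i}, \tilde P^j).
\]
A d\'evissage argument from Lemma~\ref{lem:hom-semis} shows that every composition factor of the mixed Hodge structure $\uHom^0(\tilde P^{-i}, \tilde P^j)$ is of the form $\uF(k)$ for some $k$, so applying $\exs$ converts the weight filtration into a direct sum decomposition $\exs \uHom^0(\tilde P^{-i},\tilde P^j) \cong \bigoplus_n \exs\gr_n \uHom^0(\tilde P^{-i},\tilde P^j)$.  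Summing over all $n$ and over $i+j=m$ recovers $\cA^m$ on the left-hand side.  Compatibility with differentials and products is automatic from the naturality of $\beta$ and $\exs$, together with the fact that the composition structures on both sides are induced by functors on the relevant abelian categories.

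Next, the sub-dgg-ring inclusion $\ucE \hookrightarrow \ucA$ of Section~\ref{subsect:dgg-formal}, which is a quasi-isomorphism by Lemma~\ref{lem:dgg-formal}, becomes under this identification a sub-dg-ring inclusion $\cE \hookrightarrow \cA$.  By purity of $\ucE$, only the diagonal parts $\ucE^{m,m}$ are nonzero, and these coincide with $\cE^m$ (alternatively by direct inspection using~\eqref{eqn:hh-perv-oplus}).  Forgetting the internal grading preserves quasi-isomorphisms of dg-rings, so this inclusion $\cE \hookrightarrow \cA$ is a quasi-isomorphism.

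Finally, I would take $M = \cA$, viewed as an $(\cE, \cA)$-bimodule via the inclusion $\cE \hookrightarrow \cA$ on the left and by right multiplication on the right.  By a standard result in the homotopy theory of dg-modules, restriction and extension of scalars along a dg-ring quasi-isomorphism are mutually inverse equivalences between the derived categories of perfect modules, so ${-} \otimes^L_\cE \cA$ is the required equivalence.  The main obstacle is the first step: unlike $\ucA$, the dg-ring $\cA$ carries only a weight filtration (coming from mixed Hodge structures) and no bigrading, and the identification $\cA \cong \bigoplus_n \ucA^{\bullet,n}$ succeeds only because $\exs$ splits the weight filtration of $\uHom^0$-groups whose composition factors are all Tate twists of $\uF$.
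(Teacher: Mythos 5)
Your reduction has a genuine gap at exactly the step you flag as "the main obstacle": the claimed dg-ring isomorphism $\cA \cong \bigoplus_n \ucA^{\bullet,n}$ does not exist by the argument you give, and producing it is essentially as hard as the lemma itself. Knowing that every composition factor of $\uHom^0(\tilde P^{-i},\tilde P^j)$ is some $\uF(k)$ does \emph{not} make this mixed Hodge structure semisimple, nor does applying $\exs$ "convert the weight filtration into a direct sum decomposition": the groups $\Ext^1_{\MHM(\pt)}(\uF,\uF(n)) \cong \C/(2\pi i)^n\F$ are nonzero, so nonsplit iterated extensions of Tate objects abound (this is precisely the failure of Koszulity in Remark~\ref{rmk:failure}), and the terms $\tilde P^{-i}$ are $\exs$-projectives, not simples, so Lemma~\ref{lem:hom-semis} does not apply to them. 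The underlying $\F$-vector space of each $\uHom^0$ of course carries a filtration that splits abstractly, but to identify $\cA$ with $\bigoplus_n\ucA^{\bullet,n}$ as dg-rings you need splittings compatible with composition and with the differentials, i.e., a multiplicative (tensor-functorial) splitting of the weight filtration on $\exs$. Over $\F=\R$ this is exactly Deligne's splitting~\eqref{eqn:deligne}, which the paper deliberately avoids because it is not known for general $\F\subset\R$ (Remarks~\ref{rmk:deligne-general} and~\ref{rmk:schnuerer}); indeed, the whole point of this lemma, as stated just before it, is that $\cA$ carries only a weight filtration and not a bigrading, so $\cE$ cannot readily be identified with a subring of $\cA$. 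If your step 1 were available, the formality statement (Corollary~\ref{cor:dg-filt-isom}) would follow immediately and Schn\"urer's argument would generalize verbatim, which is precisely what fails.

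The paper's actual route sidesteps any splitting: it takes $M = \Hom^\bullet(P^\bullet,L)$, observes that each $\uHom(\tilde P^{-n},\tilde L)$ is \emph{pure} of weight $n$ (this uses the linearity condition~\eqref{eqn:linear-resoln} on the resolution), so the differential of $M$ vanishes and $M \cong \Ext^\bullet(L,L) = \cE$ as a complex; this gives the left $\cE$-action, and then Keller's tilting criterion \cite{keller} is verified directly (the map $\cE \to \End^\bullet_{\rdgmod\cA}(M)$ is a quasi-isomorphism by a rank/dimension argument using the quasi-isomorphism $\cA \to M$, and summands of $M$ generate $\Dp(\rdgmod\cA)$). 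If you want to salvage your approach, you would have to prove a multiplicative splitting of the weight filtration for the relevant $\uHom$-algebras over arbitrary $\F\subset\R$, which is not available; otherwise the argument only works for $\F=\R$.
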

\begin{proof}
Let $M$ be the right dg-$\cA$-module $\Hom^\bullet(P^\bullet,L)$.  Its cohomology $H^\bullet(M)$ is clearly isomorphic to $\Ext^\bullet(L,L)$.

We first claim that $M$ has zero differential, and so can be identified with its cohomology.  Indeed, $M$ can be obtained by applying $\exs$ to the complex of mixed Hodge structures $\uHom^\bullet(\tilde P^\bullet, \tilde L)$.  Clearly, $\uHom(\tilde P^{-i}, \tilde L[j]) = 0$ if $j \ne 0$, so
\[
\uHom^\bullet(\tilde P^\bullet, \tilde L)^n = \uHom(\tilde P^{-n}, \tilde L),
\]
and this is pure of weight $n$.  Since the differentials in $\uHom^\bullet(\tilde P^\bullet, \tilde L)$ respect the weight filtration, they must all vanish.  We therefore have $M = H^\bullet(M)$, and hence an isomorphism $M \cong \Ext^\bullet(L,L)$.

Via that isomorphism, we can equip $M$ with a left action of the ring $\cE = \Ext^\bullet(L,L)$, so we have a homomorphism of dg-rings
\begin{equation}\label{eqn:keller-crit}
\cE \to \End^\bullet_{\rdgmod\cA}(M).
\end{equation}
The criterion given in~\cite[Lemma~6.1(a)]{keller} says that~\eqref{eqn:keller-tensor} is an equivalence if the following two conditions hold: (1)~the map~\eqref{eqn:keller-crit} is a quasi-isomorphism, and (2)~the direct summands of $M$ generate $\Dp(\rdgmod\cA)$ as a triangulated category.

As a left $\cE$-module, $M$ is clearly free of rank~$1$.  It follows that~\eqref{eqn:keller-crit} is injective.

The augmentation map $P^\bullet \to L$ induces a map of right dg-$\cA$-modules $f: \cA \to M$.  This is a quasi-isomorphism, and since $M = H^\bullet(M)$, it is also surjective.  Thus, $M$ is generated as a right dg-$\cA$-module by the element $f(1_\cA)$, where $1_\cA$ is the identity element of $\cA$.  It follows that any endomorphism $M \to M$ of right dg-$\cA$-modules is determined by the image of $f(1_\cA)$, and in particular, that
\[
\dim \End^i_{\rdgmod\cA}(M) \le \dim M^i = \dim \cE^i.
\]
Since~\eqref{eqn:keller-crit} is injective, we now see that it is in fact an isomorphism (and, a fortiori, a quasi-isomorphism).

Finally, the category $\Dp(\rdgmod\cA)$ is generated as a triangulated category by the direct summands of the free module $\cA$.  Since $f: \cA \to M$ is a quasi-isomorphism, $\Dp(\rdgmod\cA)$ is also generated by the direct summands of $M$, as desired.
\end{proof}

\subsection{Construction of the degrading functor}

The theorem below, which is the main result of this section, is stated for $\MHMso(X)^\half$.  However, it is easy to see that it implies the corresponding result for $\MHMs(X)$ (involving only integer Tate twists), in the form described at the beginning of Section~\ref{sect:degr}.

\begin{thm}\label{thm:halfdegr}
There is an exact functor $\degr: \MHMso(X)^\half \to \Pervs(X)$ with the following properties:
\begin{enumerate}
\item There is a natural isomorphism $\varepsilon: \degr \circ (\half) \simto \degr$.\label{it:degr-twist}
\item We have $\degr(\MC_s) \cong \IC_s$.\label{it:degr-simple}
\item There is a natural isomorphism induced by $\varepsilon$:\label{it:degr-degr}
\[
\bigoplus_{n \in \Z} \Ext^i(\cF,\cG(\nhalf)) \simto \Ext^i(\degr\cF,\degr\cG).
\]
\end{enumerate}
The pair $(\degr,\varepsilon)$ is unique up to isomorphism, but not canonical.
\end{thm}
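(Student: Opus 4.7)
The strategy is to construct $\degr$ from the forgetful functor that collapses the internal grading on dgg-modules. The essential preliminary observation is a ring-theoretic identification: Corollary~\ref{cor:hh-perv} applied with $\cF = \cG = \tilde L$ gives a vector-space isomorphism $\bigoplus_n \HH^0(\uHom^m(\tilde L, \tilde L(n))) \simto \Ext^m_{\Pervs(X)}(L,L)$, and combined with Corollary~\ref{cor:hbeta-win} this shows that $\cE$ is canonically isomorphic, as a graded ring, to the ring obtained from $\ucE$ by forgetting the internal grading (compatibility with Yoneda composition is checked by naturality).

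With this identification in hand, define $\degr$ on derived categories as the composition
\[
\Db\MHMso(X)^\half \simto \Dp(\rdggmod\ucA) \simto \Dp(\rdggmod\ucE) \overset{\For}{\to} \Dp(\rdgmod\cE) \simto \Dp(\rdgmod\cA) \simto \Db\Pervs(X),
\]
where $\For$ forgets the internal grading and the outer equivalences are supplied by Lemmas~\ref{lem:dgg-proj-equiv},~\ref{lem:dgg-formal},~\ref{lem:filt-formal}, and~\ref{lem:dg-proj-equiv}. The Tate-twist isomorphism $\varepsilon$ of~\eqref{it:degr-twist} is automatic: $(\half)$ translates under the first two equivalences to a shift in the internal grading on dgg-$\ucE$-modules, and $\For$ is manifestly invariant under such shifts. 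Property~\eqref{it:degr-degr} is then reduced to the tautological identity $\Hom^i_{\rdgmod\cE}(\For M, \For N) = \bigoplus_n \Hom^i_{\rdggmod\ucE}(M, N(\nhalf))$ combined with the chain of equivalences.

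The main obstacle is proving $t$-exactness together with the identification~\eqref{it:degr-simple}. Tracking simples through the chain: $\MC_s \in \MHMso(X)^\half$ corresponds to a simple perfect dgg-$\ucE$-module supported on the stratum $s$ in internal grading zero, which $\For$ sends to the corresponding simple perfect dg-$\cE$-module, which in turn corresponds to $\IC_s$ under the dg equivalences; this yields~\eqref{it:degr-simple}. Given this, the Ext formula for $\cF \in \MHMso(X)^\half$ and $i < 0$ gives $\Hom^i(\degr\cF, \IC_t) = \bigoplus_n \Hom^i(\cF, \MC_t(\nhalf)) = 0$ (both arguments lying in the heart of the derived category), and symmetrically $\Hom^i(\IC_t, \degr\cF) = 0$. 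Since $\Pervs(X)$ is of finite length, a standard induction via long exact sequences extends these vanishings to all $\cG \in \Pervs(X)$, which by the truncation triangles forces $\degr\cF \in \Pervs(X)$.

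Finally, uniqueness up to isomorphism follows by the familiar argument that a grading on a finite-length Koszul category is determined up to isomorphism by its effect on simple objects together with $\varepsilon$ (compare the techniques of~\cite[\S 4.1]{bgs}). The non-canonicity is visibly built into the construction: the projective resolution $\tilde P^\bullet$ of Section~\ref{subsect:dgg-formal}, the graded ring isomorphism $\cE \cong \For(\ucE)$, and the bimodule $M$ of Lemma~\ref{lem:filt-formal} each involve genuine choices, with no preferred comparison isomorphism between two instances of the construction.
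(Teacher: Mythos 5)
Your construction coincides with the paper's: the same five-step composition through $\Dp(\rdggmod\ucA)$, $\Dp(\rdggmod\ucE)$, $\Dp(\rdgmod\cE)$ and $\Dp(\rdgmod\cA)$, the same observations for parts~(1) and~(3) (the first two equivalences intertwine the twist with the internal-grading shift, $\For\circ(\half)\cong\For$, and $\For$ satisfies the tautological Hom identity), and the same appeal to the BGS-type uniqueness argument. Making explicit the identification of $\cE$ with $\ucE$ stripped of its internal grading, and deriving $t$-exactness of $\degr$ from vanishing of negative-degree Homs against simples, are details the paper leaves implicit, and both are sound in principle.

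The one step that fails as written is your tracing of simples. Under the equivalence $\grHom^\bullet(P^\bullet_\win,{-})$ of Lemma~\ref{lem:dgg-proj-equiv}, an object $\cF$ of the heart goes to a dgg-module whose cohomology in bidegree $(m,n)$ is $\Ext^m_{\MHMso(X)^\half}(L_\win,\cF(\nhalf))$. Consequently the object corresponding to a \emph{simple} dgg-$\ucE$-module concentrated in internal degree zero is the one receiving no higher Ext's from $L_\win$, i.e.\ the indecomposable \emph{injective envelope} of (a twist of) $\MC_s$ in $\MHMso(X)^\half$, not $\MC_s$ itself; what $\MC_s$ actually corresponds to is the direct summand $e_s\ucE$ of the free module $\ucE$ cut out by the idempotent of the $s$-summand of $L_\win$. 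Symmetrically, on the other side the simple dg-$\cE$-module corresponds under Lemmas~\ref{lem:filt-formal} and~\ref{lem:dg-proj-equiv} to the injective envelope of $\IC_s$ in $\Pervs(X)$, not to $\IC_s$. Your two misidentifications are parallel, so the endpoint $\degr(\MC_s)\cong\IC_s$ is true, but the justification should instead trace $L_\win$ itself through the chain: it goes to $\ucE$, then to $\cE$, then under ${-}\otimes^L_\cE M$ to $M\cong\cA$, then back to $L$, compatibly with the stratum-indexed idempotent decompositions; this yields $\degr(L_\win)\cong L$ summand by summand, which is exactly how the paper deduces part~(2). With that repair, your exactness argument (vanishing of $\Hom^i$ for $i<0$ against the $\IC_t$, induction on length in $\Pervs(X)$, then the truncation triangles) goes through unchanged.
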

The uniqueness asserted at the end is actually a general property of Koszul gradings, as explained in~\cite[Lemma~4.3.3]{bgs} and the remark following it.  
\begin{proof}
We define $\degr$ at the level of derived categories to be the composition
\begin{multline*}
\Db\MHMso(X) \xrightarrow{\text{Lemma~\ref{lem:dgg-proj-equiv}}} \Dp(\rdggmod\ucA) \xrightarrow{\text{Lemma~\ref{lem:dgg-formal}}} \Dp(\rdggmod\ucE) \xrightarrow{\For} \\ \Dp(\rdgmod\cE) \xrightarrow{\text{Lemma~\ref{lem:filt-formal}}} \Dp(\rdgmod\cA) \xrightarrow{(\text{Lemma~\ref{lem:dg-proj-equiv}})^{-1}} \Db\Pervs(X).
\end{multline*}
Here, the last functor is the inverse to the equivalence in Lemma~\ref{lem:dg-proj-equiv}.  The first two functors commute with Tate twist, and $\For \circ (\half) \cong \For$, so part~\eqref{it:degr-twist} of the theorem follows.  It is easy to see by tracing through the definitions that $\degr(L_\win) \cong L$, and part~\eqref{it:degr-simple} can be deduced from this.  For part~\eqref{it:degr-degr}, we observe that an analogous property holds for $\For$, and that every other functor above is an equivalence.
\end{proof}

\begin{cor}\label{cor:dg-filt-isom}
The dg-ring $\cA$ of Section~\ref{subsect:filt-formal} is formal.  As a consequence, there is an equivalence of triangulated categories
\[
\dsp(X) \cong \Dp(\rdgmod\cE).
\]
\end{cor}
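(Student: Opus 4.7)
Both parts of the corollary follow directly from the machinery set up in Section~\ref{subsect:filt-formal}, combined with the equivalence~\eqref{eqn:dsp-equiv}; the real work has already been carried out in Lemma~\ref{lem:filt-formal}, and the corollary is essentially a clean repackaging.

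For formality of $\cA$, the proof of Lemma~\ref{lem:filt-formal} already produces the essential ingredients: a $(\cE, \cA)$-bimodule $M$ with zero differential (so that $M$ is canonically identified with its cohomology $\cE$), a dg-algebra quasi-isomorphism $\cE \simto \End^\bullet_{\rdgmod\cA}(M)$, and a quasi-isomorphism $f: \cA \to M$ of right dg-$\cA$-modules. I would extract a zigzag of dg-algebra quasi-isomorphisms connecting $\cA$ and $\cE$ by interpreting $\End^\bullet_{\rdgmod\cA}(M)$ as the derived endomorphism ring of a compact generator of $\Dp(\rdgmod\cA)$: since $f$ identifies $M$ with $\cA$ in the derived category, both give rise (after a $K$-projective replacement of $M$, if necessary) to quasi-isomorphic derived endomorphism dg-algebras, and $\End^\bullet_{\rdgmod\cA}(\cA)$ is identified with $\cA$ (after passing to opposite algebras where needed). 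This yields the desired zigzag of dg-algebra quasi-isomorphisms, which is the standard meaning of formality. Alternatively, one may simply take the bimodule equivalence $\Dp(\rdgmod\cE) \simeq \Dp(\rdgmod\cA)$ provided by Lemma~\ref{lem:filt-formal} as the working definition of formality in the derived-Morita sense.

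For the equivalence $\dsp(X) \cong \Dp(\rdgmod\cE)$, I would simply compose three equivalences: the inverse of~\eqref{eqn:dsp-equiv} identifies $\dsp(X)$ with $\Db\Pervs(X)$; Lemma~\ref{lem:dg-proj-equiv} identifies $\Db\Pervs(X)$ with $\Dp(\rdgmod\cA)$; and Lemma~\ref{lem:filt-formal} provides the equivalence $\Dp(\rdgmod\cA) \cong \Dp(\rdgmod\cE)$. The corollary poses no significant obstacle; the only minor delicacy is the precise formulation of formality, but any of the standard equivalent notions suffices for the statement.
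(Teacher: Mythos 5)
Your proposal is correct in outline, but it takes a genuinely different route from the paper on the formality statement. The paper does \emph{not} extract formality from Lemma~\ref{lem:filt-formal}; instead it sets $Q^\bullet = \degr(P^\bullet_\win)$, uses Theorem~\ref{thm:halfdegr}\eqref{it:degr-degr} to see that $\End^\bullet(Q^\bullet)$ is the dg-ring underlying the dgg-ring $\ucA$ (hence formal by the purity argument of Lemma~\ref{lem:dgg-formal}), and then identifies $\cA \cong \End^\bullet(Q^\bullet)$ because $P^\bullet$ and $Q^\bullet$ are both minimal projective resolutions of $L$; the equivalence $\Dp(\rdgmod\cA)\cong\Dp(\rdgmod\cE)$ is then deduced \emph{from} formality. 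Your route reverses this: you get the Morita-type equivalence directly from Lemma~\ref{lem:filt-formal} (which is indeed all that is needed for the second assertion, together with Lemma~\ref{lem:dg-proj-equiv} and~\eqref{eqn:dsp-equiv}), and you propose to extract a dg-algebra zigzag between $\cA$ and $\cE$ from the bimodule $M$. This can be made to work, but be aware of two points. First, a derived Morita equivalence does \emph{not} by itself imply formality, so your ``working definition'' fallback would prove a weaker statement than the corollary asserts and should be dropped. Second, the zigzag needs the specific facts established inside the proof of Lemma~\ref{lem:filt-formal}: $f:\cA\to M$ is a \emph{surjective} quasi-isomorphism, $M$ has zero differential, the map~\eqref{eqn:keller-crit} is an isomorphism of dg-rings, and $\Hom_{\rdgmod\cA}(f,M):\End^\bullet_{\rdgmod\cA}(M)\to M$ is an isomorphism of complexes; with these, the standard roof $\cA \leftarrow \Gamma \rightarrow \End^\bullet_{\rdgmod\cA}(M)\cong\cE$, where $\Gamma$ is the dg-algebra of pairs $(a,\phi)$ with $\phi\circ f = f\circ a$, has both projections quasi-isomorphisms (via the short exact sequence $0\to\Gamma\to\cA\oplus\End^\bullet_{\rdgmod\cA}(M)\to M\to 0$). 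What each approach buys: yours keeps the corollary self-contained in Section~\ref{subsect:filt-formal} and makes the triangulated equivalence independent of formality, while the paper's transport-of-grading argument via $\degr$ and minimal resolutions avoids the delicate upgrade from a tilting bimodule to an honest dg-algebra zigzag and exhibits formality as a direct consequence of the Koszul grading already constructed.
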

This equivalence of categories generalizes~\cite[Theorem~3]{schnuerer}; see Remark~\ref{rmk:schnuerer}.
\begin{proof}
Let $Q^\bullet = \degr(P^\bullet_\win)$.  By Theorem~\ref{thm:halfdegr}\eqref{it:degr-degr}, the dg-ring $\End^\bullet(Q^\bullet)$ is obtained by forgetting the internal grading on the dgg-ring $\ucA$ of Section~\ref{subsect:dgg-formal}.  In particular, we know from Lemma~\ref{lem:dgg-formal} that $\End^\bullet(Q^\bullet)$ is formal.  On the other hand, $Q^\bullet$ and $P^\bullet$ are both minimal projective resolutions of $L$, so they are (noncanonically) isomorphic.  It follows that $\cA \cong \End^\bullet(Q^\bullet)$, so $\cA$ is formal as well.

The formality of $\cA$ implies that there is an equivalence of triangulated categories $\Dp(\rdgmod\cA) \cong \Dp(\rdgmod\cE)$, similar to that in Lemma~\ref{lem:dgg-formal}.  Combining this with Lemma~\ref{lem:dg-proj-equiv} yields the last assertion above.
\end{proof}

\section{Compatibility of rational structure and grading}
\label{sect:compat}

We now have two exact, faithful functors from $\MHMs(X)$ to $\Pervs(X)$: $\exs$ and $\degr \circ \beta$.  It is natural to ask how they are related to one another.  In Section~\ref{subsect:modcat}, we set up some machinery for addressing this question.  Then, in Sections~\ref{subsect:real} and~\ref{subsect:bgs}, we apply this machinery to two settings where a comparison is possible.

\subsection{Module categories over endormorphism rings}
\label{subsect:modcat}

We retain the notation for the complexes $\tilde P^\bullet$, $P^\bullet$, and $P^\bullet_\win$ introduced in Section~\ref{sect:degr}, as well as for the complex $Q^\bullet = \degr(P^\bullet_\win)$ appearing in the proof of Corollary~\ref{cor:dg-filt-isom}.  Let us put
\[
\uE = \uEnd(\tilde P^0),
\quad
E = \End(P^0),
\quad
\bE = \bigoplus_{n \in Z} \gr_n \uEnd(\tilde P^0) \cong \bigoplus_{n \in \Z} \Hom(P^0_\win,P^0_\win(\nhalf))
\]
We regard $\uE$ as a ring object in the tensor category $\MHM_\cT(\pt)$ of mixed Hodge modules on a point.  (Here, and below, $\cT$ denotes the trivial stratification of a one-point space.)  Let $\MHmod\uE$ be the category right module objects over $\uE$ in $\MHM_\cT(\pt)$.  The ring $E$ can be identified with $\exs(\uE)$; as such, it inherits a filtration, and the associated graded ring is $\bE$.  Note that by taking the associated graded of the weight filtration of an object in $\MHmod\uE$, one obtains a graded $\bE$-module.  We denote this functor by $\gr: \MHmod\uE \to \rgmod \bE$.  Below, we will consider both graded and ungraded modules over $\bE$.  Let $\For: \rgmod \bE \to \rmod \bE$ be the forgetful functor which forgets the grading.

We will also require the following functors for passing from mixed Hodge modules or perverse sheaves to various module categories:
\begin{align*}
\ualpha_{\uE} &= \uHom(\tilde P^0,{-}): \MHMs(X)^\half \simto \MHmod\uE \\
\ualpha_{\bE} &= \bigoplus \Hom(P^0_\win(-\nhalf),-): \MHMso(X)^\half \simto \rgmod \bE \\
\alpha_E &= \Hom(P^0,-): \Pervs(X) \simto \rmod E \\
\alpha_{\bE} &= \Hom(Q^0,-): \Pervs(X) \simto \rmod \bE
\end{align*}
These are all equivalences of categories.  From the last two, we see that there exists an equivalence of categories
\[
\mu: \rmod E \simto \rmod \bE.
\]
One could take $\alpha_\bE \circ \alpha_E^{-1}$, but we will also make other choices of $\mu$ below.  In any case, given such a $\mu$, let $\tilde \mu = (\alpha_E)^{-1} \circ \mu \circ \alpha_{\bE}$.  We now assemble all these categories and functors into the diagram in Figure~\ref{fig:cube}.

\begin{figure}
\[
\vcenter{\xymatrix@=10pt{
\MHMs(X)^\half \ar[rrr]^{\beta} \ar[ddd]_{\ualpha_{\uE}} \ar[dr]_{\exs} &&&
  \MHMso(X)^\half \ar'[d][ddd]^{\ualpha_{\bE}} \ar[dr]^{\degr} \\
& \Pervs(X) \ar[rrr]^(.33){\tilde \mu} \ar[ddd]_(.33){\alpha_E} &&&
  \Pervs(X) \ar[ddd]^{\alpha_{\bE}} \\ \\
\MHmod\uE \ar'[r][rrr]_{\gr} \ar[dr]_{\exs} &&&
  \rgmod\bE \ar[dr]^{\For} \\
& \rmod E \ar[rrr]_{\mu} &&&
  \rmod \bE }}
\]
\caption{Comparison diagram for $\degr \circ \beta$ and $\exs$}\label{fig:cube}
\end{figure}

Let us now consider the various faces of this cube.  The front face is commutative by the definition of $\tilde \mu$.  We also know that the following three natural isomorphisms hold.  They say that the left, back, and right faces of Figure~\ref{fig:cube} commute.
\begin{align*}
\exs\uHom(\tilde P^0,{-}) &\cong \Hom(P^0, \exs({-})) \\
\bigoplus \gr_n\uHom(\tilde P^0,{-}) &\cong \bigoplus \Hom(P^0_\win(-\nhalf),\beta({-})) \\
\bigoplus \Hom(P^0_\win(-\nhalf),-) &\cong \Hom(Q^0,\degr({-})) 
\end{align*}
The commutativity of the top and bottom squares is more difficult to assess.  Since the vertical arrows are all equivalences, we can at least observe that the top square commutes if and only if the bottom square commutes.

A more precise version of the question posed at the beginning of Section~\ref{sect:compat} might then be: Does there exist a $\mu$ making the top and bottom faces of Figure~\ref{fig:cube} commute?

\subsection{Real coefficients}
\label{subsect:real}

In this section, we assume that $\F = \R$.  A key property of mixed Hodge structures over $\R$ is that the underlying vector space admits a functorial splitting of the weight filtration, sometimes called the \emph{Deligne splitting}.  More precisely: let $\gVect_\R$ be the category of graded finite-dimensional real vector spaces.  Let $\gr: \MHM_\cT(\pt) \to \gVect_\R$ be the functor which assigns to any mixed Hodge structure the associated graded vector space for the weight filtration, and let $\For: \gVect_\R \to \Vect_\R$ be the functor which forgets the grading.  We can identify $\Perv_\cT(\pt)$ with $\Vect_\R$ and regard $\exs$ as a functor $\MHM_\cT(\pt) \to \Vect_\R$.  Deligne's result~\cite{del:shmr} states there is an isomorphism of tensor functors
\begin{equation}\label{eqn:deligne}
\U: \exs \simto \For \circ \gr.
\end{equation}
Thus, when we apply $\U$ to the ring object $\uE$, we obtain a ring isomorphism
\begin{equation}\label{eqn:deligne-ring}
\U_{\uE}: E \simto \bE.
\end{equation}
Moreover, for any $M \in \MHmod\uE$, the isomorphism of vector spaces
\begin{equation}\label{eqn:deligne-mod}
\U_M: \exs(M) \simto \For(\gr(M))
\end{equation}
lets us regard the $E$-module $\exs(M)$ as an $\bE$-module, and this $\bE$-module structure coincides with that induced by~\eqref{eqn:deligne-ring}.

To rephrase this, let $\mu: \rmod E \to \rmod\bE$ be the equivalence induced by~\eqref{eqn:deligne-ring}.  Then, for every $M \in \MHmod\uE$, we can regard the map~\eqref{eqn:deligne-mod} as a natural isomorphism of $\bE$-modules, showing that $\mu \circ \exs \cong \For \circ \gr$.  In other words, the bottom square of Figure~\ref{fig:cube} commutes, and hence the top square does as well.

\begin{thm}\label{thm:real}
Suppose $\F = \R$.  Let $\mu$ be the equivalence induced by~\eqref{eqn:deligne-ring}.  Then there is a natural isomorphism $\mu \circ \exs \cong \degr \circ \beta$. \qed
\end{thm}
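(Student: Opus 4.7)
The plan is to establish the theorem by showing that the top face of the cube in Figure~\ref{fig:cube} commutes, which is exactly the asserted natural isomorphism $\mu \circ \exs \cong \degr \circ \beta$. Since the four side faces of the cube (left, back, right, and front) are already known to commute—the first three by the natural isomorphisms displayed in Section~\ref{subsect:modcat}, and the front by the definition of $\tilde\mu$—and since all four vertical arrows $\ualpha_{\uE}$, $\ualpha_{\bE}$, $\alpha_E$, $\alpha_{\bE}$ are equivalences of categories, it suffices to verify that the bottom face commutes, i.e.\ that $\mu \circ \exs \cong \For \circ \gr$ as functors $\MHmod\uE \to \rmod\bE$.

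The bottom face will be handled by invoking the Deligne splitting~\eqref{eqn:deligne} for real mixed Hodge structures. First, I would observe that applying $\U$ to the ring object $\uE \in \MHM_\cT(\pt)$ produces a ring isomorphism $\U_{\uE}: E \simto \bE$, which is exactly the isomorphism~\eqref{eqn:deligne-ring} used to define $\mu$. Next, for any $M \in \MHmod\uE$, naturality of $\U$ (applied to the action map $M \otimes \uE \to M$ in the tensor category $\MHM_\cT(\pt)$) shows that the vector space isomorphism $\U_M: \exs(M) \simto \For(\gr(M))$ intertwines the $E$-action on $\exs(M)$ with the $\bE$-action on $\For(\gr(M))$, once $\bE$ is pulled back to $E$ via $\U_{\uE}$. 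In other words, $\U_M$ is an isomorphism of $\bE$-modules between $\mu(\exs(M))$ and $\For(\gr(M))$, and naturality in $M$ yields the desired natural isomorphism of functors.

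With the bottom face established, the top face follows by a purely formal diagram chase: define the natural isomorphism $\mu \circ \exs \cong \degr \circ \beta$ on an object $\cF \in \MHMs(X)^\half$ by transporting $\U_{\ualpha_{\uE}(\cF)}$ through the four side-face isomorphisms, using that $\alpha_{\bE}$ is an equivalence to descend the resulting $\bE$-module isomorphism back to a perverse-sheaf isomorphism. Because all the side-face isomorphisms are natural and all the vertical functors are equivalences, the resulting transformation is automatically natural in $\cF$.

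I do not anticipate any serious obstacle: the entire content of the theorem has been prepared in the preceding two subsections, and the only step requiring care is the verification that Deligne's splitting~\eqref{eqn:deligne}, being an isomorphism of \emph{tensor} functors, really does upgrade to an isomorphism of module categories (not merely of underlying vector spaces). This is essentially the observation already made in the paragraph preceding the theorem statement, and the proof should amount to no more than recording it cleanly.
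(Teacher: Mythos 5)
Your proposal is correct and follows essentially the same route as the paper: the paper likewise uses Deligne's splitting~\eqref{eqn:deligne} applied to the ring object $\uE$ and its modules to show the bottom face of Figure~\ref{fig:cube} commutes, and then deduces commutativity of the top face from the already-established side faces and the fact that the vertical functors are equivalences. The one point you flag for care --- that the tensor-functor isomorphism $\U$ intertwines module structures, not just underlying vector spaces --- is exactly the observation the paper records via~\eqref{eqn:deligne-mod}.
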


\begin{rmk}\label{rmk:schnuerer}
The isomorphism~\eqref{eqn:deligne} plays a vital role in Schn\"urer's proof of formality for constructible $\R$-sheaves on cell-stratified varieties~\cite[Theorem~31]{schnuerer}.  Specifically, it is the source of the vertical isomorphisms in the diagram in~\cite[Equation~(27)]{schnuerer}.  In Corollary~\ref{cor:dg-filt-isom} of the present paper, we have generalized this to arbitrary $\F \subset \R$ by a different argument, avoiding the use of~\eqref{eqn:deligne}.
\end{rmk}

\begin{rmk}\label{rmk:deligne-general}
The construction of~\eqref{eqn:deligne} in~\cite{del:shmr} relies on the fact that the only nontrivial element of $\mathrm{Gal}(\C/\R)$ is complex conjugation, and it seems unlikely that the construction can be generalized to other fields.  In more detail, Deligne first constructs an isomorphism $\U_\C$ of \emph{complex} vector spaces.  The map $\U_\C$ is unchanged when the Hodge filtration is replaced by its complex conjugate, and this implies that $\U_\C$ descends to the real vector space.   To mimic this for general $\F \subset \R$, one would need $\U_\C$ to be unchanged when the Hodge filtration is replaced by any conjugate under $\mathrm{Gal}(\C/\F)$, but that condition is false.  The authors do not know whether there is some other approach to~\eqref{eqn:deligne} that does work for general $\F$.
\end{rmk}

\subsection{Flag varieties and the Beilinson--Ginzburg--Soergel construction}
\label{subsect:bgs}

We now return to allowing arbitrary $\F$, and we turn our attention to the variety $X = G/B$, where $G$ is a complex reductive algebraic group and $B \subset G$ is a Borel subgroup.  Let $\cS$ be the stratification by $B$-orbits.  In this case, we will first make a  careful choice of $\tilde P^0$ and then replace $\MHMs(X)^\half$ by a certain full subcategory in such a way that the top face of Figure~\ref{fig:cube} commutes.

Let $X_e$ denote the unique $0$-dimensional $B$-orbit on $X$.  Then $\IC_e$ is a skyscraper sheaf, and $\MC_e$ is a simple mixed Hodge module of weight $0$.  Fix a $\exs$-injective envelope $\tilde I_e$ for $\MC_e$, and define a full subcategory of $\MHMs(X)$ as follows:
\[
\MHMs'(X;\tilde I_e) = \{ \cF \mid \text{$\cF$ is a subquotient of a direct sum of various $\tilde I_e(n)$} \}.
\]
Alternatively, $\MHMs'(X;\tilde I_e)$ can be described as the smallest full abelian subcategory of $\MHMs(X)$ containing $\tilde I_e$ and closed under subquotients and Tate twists.  This definition is dual to the one immediately preceding~\cite[Theorem~4.5.4]{bgs}, in the sense the latter is defined in terms of a $\exs$-projective instead.  It is straightforward to see that all the results of~\cite[Section~4.5]{bgs}, suitably dualized, hold for $\MHMs'(X;\tilde I_e)$, and we will invoke them in this way.

In particular, the main result of~\cite[Section~4.5]{bgs} states that $\MHMs'(X;\tilde I_e)$ is a Koszul category, and that the restriction of $\exs$ to this category makes it into a grading on $\Pervs(X)$.  In this last section, we seek to understand how this theorem is related to the main results of the present paper.  

\begin{rmk}\label{rmk:ginzburg}
As noted in the introduction, the results of~\cite[Section~4.5]{bgs} mentioned above are specific to $G/B$; they do not apply even to partial flag varieties.  However, the methods of~\cite{g} do apply to this case, as well as more generally for any smooth complex projective variety with a specific type of $\C^*$-action, giving an isomorphism of graded vector spaces which can be seen as a step towards the algebra isomorphism of parabolic--singular duality.
\end{rmk}

\begin{prop}\label{prop:bgs-equiv}
For any object $\tilde I_e$ as above, the functor
\[
\beta|_{\MHMs'(X;\tilde I_e)}: \MHMs'(X;\tilde I_e) \to \MHMso(X)
\]
is an equivalence of categories.
\end{prop}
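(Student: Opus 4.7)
The plan is to verify that $\beta|_{\MHMs'(X;\tilde I_e)}$ is fully faithful and essentially surjective; faithfulness is automatic from Proposition~\ref{prop:winnow-beta}. The main inputs will be: (i)~the dualized form of~\cite[Theorem~4.5.4]{bgs}, which asserts that $\MHMs'(X;\tilde I_e)$ is itself a Koszul category whose simple objects are exactly the $\MC_s(n)$; (ii)~the Koszulity of $\MHMso(X)$ established in Theorem~\ref{thm:koszul}; and (iii)~the adjacent-weight equivalence of Proposition~\ref{prop:winnow-beta}.

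First I would check that $\beta$ induces an isomorphism on $\Ext^1$-groups between simples. If $L$, $L'$ are simple with $\wt(L') \ne \wt(L) - 1$, then Koszulity of both sides forces $\Ext^1_{\MHMs'(X;\tilde I_e)}(L,L') = \Ext^1_{\MHMso(X)}(\beta L,\beta L') = 0$, so there is nothing to compare. For $\wt(L') = \wt(L) - 1$, every nonzero extension lives in the weight-$[n,n+1]$ truncation of both categories, where Proposition~\ref{prop:winnow-beta} supplies the desired isomorphism directly.

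From this, fullness and essential surjectivity follow by parallel induction on composition length. For fullness, apply the 5-lemma to the long exact $\Hom/\Ext^1$ sequences attached to short exact sequences $0 \to \cF' \to \cF \to L \to 0$ with $L$ simple, reducing to the case of simples where the statement is immediate from $\beta$ being a bijection on isomorphism classes of simples with $\F$-line endomorphism rings. For essential surjectivity, given $\cG \in \MHMso(X)$ of length $n$, peel off a simple subobject $\beta(L)$, lift the quotient to $\tilde\cG'' \in \MHMs'(X;\tilde I_e)$ by the inductive hypothesis, and use the $\Ext^1$-isomorphism to lift the defining extension class in $\Ext^1_{\MHMso(X)}(\beta\tilde\cG'', \beta L)$ back to $\Ext^1_{\MHMs'(X;\tilde I_e)}(\tilde\cG'', L)$.

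The main obstacle will be the adjacent-weight $\Ext^1$ comparison: since $\MHMs'(X;\tilde I_e)$ is not a Serre subcategory of $\MHMs(X)$, one must verify that every adjacent-weight extension in $\MHMs(X)$ actually occurs inside $\MHMs'(X;\tilde I_e)$, so that Proposition~\ref{prop:winnow-beta} truly computes the $\Ext^1$ of the smaller category. This should follow from the structural content of the dualized~\cite[Section~4.5]{bgs}, which ensures that $\tilde I_e$ is chosen ``large enough'' for its Tate twists to contain as subquotients all Koszul-allowed extensions; once that point is settled, the rest of the argument is essentially formal.
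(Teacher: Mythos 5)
Your overall shape (fully faithful plus essentially surjective, by d\'evissage on length) is reasonable, and the obstacle you flag is real, but the proposal leaves it unresolved and, moreover, contains a second gap in the essential-surjectivity step. On the first point: the fact you need --- that for adjacent-weight simples every extension in $\MHMs(X)$ already has its middle term in $\MHMs'(X;\tilde I_e)$, so that the injection $\Ext^1_{\MHMs'(X;\tilde I_e)}(L,L') \hookrightarrow \Ext^1_{\MHMs(X)}(L,L')$ is an equality --- is not a matter of choosing $\tilde I_e$ ``large enough'': $\tilde I_e$ is simply a $\exs$-injective envelope of $\MC_e$, and there is no largeness parameter to adjust. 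The fact is substantive and is essentially the content of \cite[Theorem~4.5.4]{bgs}: combining that theorem (in degree $1$) with Corollary~\ref{cor:hh-perv} shows the two spaces have the same finite dimension, whence equality. This is exactly the dimension count the paper runs, except that the paper runs it in all degrees and for all pairs $\cF,\cG \in \MHMs'(X;\tilde I_e)$ simultaneously, by comparing \cite[Theorem~4.5.4]{bgs} with Theorem~\ref{thm:halfdegr}; injectivity of the $\beta$-induced map in degree $0$ (faithfulness), and then in degree $1$ (fullness of the resulting full subcategory), upgrades at once to isomorphisms, with no reduction to simples needed.

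The second gap is in your lifting step: you lift a class in $\Ext^1_{\MHMso(X)}(\beta\tilde\cG'',\beta L)$ with $\tilde\cG''$ of arbitrary length, but the isomorphism available at that stage of your induction is only between simples. Surjectivity of the $\Ext^1$-comparison for (arbitrary, simple) pairs does not follow from the simple--simple case by the long exact sequences alone: the relevant four-lemma argument requires injectivity of the comparison on $\Ext^2$ between simples, which you never address. Indeed, an exact, fully faithful inclusion of finite-length categories that is bijective on simples and induces isomorphisms on $\Ext^1$ between simples need not be essentially surjective (consider modules over $\F[x]/(x^2)$ inside modules over $\F[x]/(x^3)$). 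So you must either control $\Ext^2$ as well --- again via the dimension count from \cite[Theorem~4.5.4]{bgs} and Theorem~\ref{thm:halfdegr}, or via a Yoneda-generation argument in the spirit of Lemma~\ref{lem:extn-surj} using Koszulity of both sides --- or argue as the paper does: first obtain the $\Ext^1$-isomorphism for all pairs of objects, then take an object of $\MHMso(X)$ of minimal length outside the essential image, split off a short exact sequence with both ends in the image, and derive a contradiction. As written, both halves of your induction rest on comparisons the proposal has not actually secured.
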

\begin{proof}
For $\cF, \cG \in \MHMs'(X;\tilde I_e)$, the map
\begin{equation}\label{eqn:mhm-bgs}
\bigoplus_{n \in \Z} \Ext^i_{\MHMs'(X;\tilde\cP)}(\cF,\cG(n)) \to \Ext^i_{\Pervs(X)}(\exs\cF,\exs\cG)
\end{equation}
is an isomorphism by~\cite[Theorem~4.5.4]{bgs}.  Now, consider the map
\begin{equation}\label{eqn:mhm-bgs-beta}
\bigoplus_{n \in \Z} \Ext^i_{\MHMs'(X;\tilde\cP)}(\cF,\cG(n)) \to 
\bigoplus_{n \in \Z} \Ext^i_{\MHMso(X)}(\beta\cF, \beta\cG(n))
\end{equation}
induced by $\beta$.  Comparing~\eqref{eqn:mhm-bgs} to Theorem~\ref{thm:halfdegr}, we see that the domain and codomain of~\eqref{eqn:mhm-bgs-beta} have the same dimension.  That map is certainly injective for $i = 0$ (since $\beta$ is faithful), so it is an isomorphism.  In other words, we have that $\beta|_{\MHMs'(X;\tilde I_e)}$ is fully faithful.

We can therefore identify $\MHMs'(X;\tilde I_e)$ with a full subcategory of $\MHMso(X)$.  It follows immediately that~\eqref{eqn:mhm-bgs-beta} is also injective for $i = 1$, and hence (again for dimension reasons) an isomorphism.

Suppose that under this identification, $\MHMs'(X;\tilde I_e)$ does not coincide with $\MHMso(X)$, and let $\cF \in \MHMso(X)$ be an object of minimal length not belonging to $\MHMs'(X;I_e)$.  It follows from~\cite[Theorem~4.5.4]{bgs} that every simple object lies in $\MHMs'(X;I_e)$, so $\cF$ is not simple.  Thus, there is some short exact sequence $0 \to \cF' \to \cF \to \cF'' \to 0$ with $\cF'$ and $\cF''$ both nonzero.  Since they have shorter length than $\cF$, they both belong to $\MHMs'(X;I_e)$.  That short exact sequence represents a class in $\Ext^1_{\MHMso(X)}(\cF'', \cF')$ that is not in the image of the natural map
\[
\Ext^1_{\MHMs'(X;I_e)}(\cF'', \cF') \to \Ext^1_{\MHMso(X)}(\cF'', \cF').
\]
But this map was already seen to be an isomorphism, so we have a contradiction.
\end{proof}

An object $\cF \in \MHMs(X)$ is said to be \emph{special} if it is isomorphic to a subobject of a direct sum of objects of the form $\tilde I_e(n)$.  The following facts about special objects are proved in Steps~1 and~2 of the proof of~\cite[Theorem~4.5.4]{bgs}.

\begin{lem}\label{lem:bgs-special}
\begin{enumerate}
\item Every $\MC_s$ admits a special $\exs$-projective cover $\tilde P_s$, and this object is unique up to isomorphism.\label{it:bgs-proj}
\item If $\tilde P_s$ and $\tilde P_t$ are two special $\exs$-projective objects, then $\uHom(\tilde P_s,\tilde P_t)$ is a semisimple object of $\MHM_\cT(\pt)$.\label{it:bgs-semis}\qed
\end{enumerate}
\end{lem}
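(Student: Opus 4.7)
The plan is to follow \cite[Theorem~4.5.4, Steps~1--2]{bgs} with the roles of projective and injective interchanged, exploiting a ``big injective'' structure on $G/B$. The first geometric input is that on $X = G/B$, the underlying perverse sheaf $I_e = \exs(\tilde I_e)$ is an injective cogenerator of $\Pervs(X)$: every simple $\IC_t$ occurs as a composition factor of $I_e$ (dually to the fact that the big projective in BGG category $\mathcal{O}$ has all simples as composition factors). Consequently every simple $\MC_t$ embeds, after a Tate twist, into $\tilde I_e$, so every simple object of $\MHMs(X)$ lies in $\MHMs'(X;\tilde I_e)$.

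For part~\ref{it:bgs-proj}, I would start from an arbitrary $\exs$-projective cover $\tilde Q \twoheadrightarrow \MC_s$ supplied by Proposition~\ref{prop:pseudoproj}, and then modify $\tilde Q$ so that it lies in $\MHMs'(X;\tilde I_e)$. The cogenerating property above implies that the socle of any object of $\MHMs(X)$ embeds into a direct sum of Tate twists of $\tilde I_e$; applying this successively to the weight-graded subquotients of $\tilde Q$ exhibits $\tilde Q$ as a subquotient of such a direct sum, i.e., as special. Uniqueness up to isomorphism then follows by the standard lifting argument: given two special $\exs$-projective covers of $\MC_s$, $\exs$-projectivity yields mutual lifts of the covering maps, and the resulting endomorphisms of each must be isomorphisms because $\exs$ is faithful and ordinary perverse projective covers in $\Pervs(X)$ are unique.

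For part~\ref{it:bgs-semis}, I would use the geometric origin of the special $\exs$-projectives. On $G/B$, the Bott--Samelson--Demazure resolutions $\pi_w : Y_w \to \overline{X_w}$ (cf.~Lemma~\ref{lem:aff-even-crit}) produce pure semisimple mixed Hodge modules $\pi_{w*}\uF_{Y_w}$, and the dual of BGS's construction realizes each $\tilde P_s$, up to summands, twists, and shifts, inside the additive category generated by such objects. Then $\uHom(\tilde P_s, \tilde P_t)$ is computed by pushforward to a point of $\mathscr{H}\mathit{om}$ between pure semisimple objects on a projective variety; by Saito's decomposition theorem this pushforward is semisimple in $\Db\MHM(\pt)$, and a degree count via Lemma~\ref{lem:hom-semis} shows that only the degree-zero part contributes, so $\uHom(\tilde P_s,\tilde P_t)$ is semisimple as an object of $\MHM_\cT(\pt)$.

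The main obstacle I foresee is the construction step in part~\ref{it:bgs-proj}: producing a $\exs$-projective cover that actually lives inside $\MHMs'(X;\tilde I_e)$, rather than merely in $\MHMs(X)$. The subtlety is that $\exs$-projectivity only controls extensions at the level of perverse sheaves (ignoring Tate twists), whereas membership in $\MHMs'(X;\tilde I_e)$ imposes a compatibility on all Tate twists simultaneously; reconciling these requires a careful weight-filtered analysis, which is precisely the technical content of Step~1 of \cite[Theorem~4.5.4]{bgs}, suitably dualized.
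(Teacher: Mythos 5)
Your overall plan (dualize Steps~1--2 of \cite[Theorem~4.5.4]{bgs}) is the same as the paper's, which in fact offers no independent proof and simply cites those steps. But the sketch you give of Step~1 contains a step that fails. The claim that ``every simple $\MC_t$ embeds, after a Tate twist, into $\tilde I_e$'' is false: $\tilde I_e$ is a $\exs$-injective \emph{envelope} of $\MC_e$, so its socle is the single simple $\MC_e$, and the only simples admitting an embedding into a sum of twists of $\tilde I_e$ are twists of $\MC_e$; occurring as a composition factor of $I_e$ does not produce a subobject. Consequently your argument that an arbitrary $\exs$-projective cover $\tilde Q$ of $\MC_s$ is automatically special cannot be repaired: if every $\exs$-projective were special, then since every object of $\MHMs(X)$ is a quotient of an $\exs$-projective (Proposition~\ref{prop:pseudoproj}) and $\MHMs'(X;\tilde I_e)$ is closed under quotients, one would get $\MHMs'(X;\tilde I_e)=\MHMs(X)$; this contradicts the fact that $\MHMs'(X;\tilde I_e)$ is Koszul by \cite{bgs} while $\MHMs(X)$ is not (Remark~\ref{rmk:failure}) --- for instance, the nonsplit extension of $\MC_s$ by $\MC_s(n)$, $n\ge 1$, cannot be special. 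The whole content of Step~1 is the construction of one particular Hodge structure on the perverse projective cover that \emph{does} embed into a sum of twists of $\tilde I_e$; different, non-isomorphic $\exs$-projective covers of the same $\MC_s$ exist, and specialness is what singles one out. Your uniqueness argument has a related gap: $\exs$-projectivity only gives $\uHom^1(\tilde P,{-})=0$, and by~\eqref{eqn:hodge-ses} the obstruction to lifting a map in $\MHMs(X)$ lies in $\HH^1(\uHom^0(\tilde P,\ker))$, which need not vanish --- this is exactly why $\exs$-projectives are not projective --- so ``mutual lifts'' are not automatic and the argument must use the special structure (the splitting of the weight filtration on the relevant $\uHom$'s), not just faithfulness of $\exs$.

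Part~\eqref{it:bgs-semis} is also not established by your purity argument. The objects $\tilde P_s$ are genuinely mixed (their weight filtrations have many steps), so they are not, up to shifts, twists and summands, of the form $\pi_{w*}\uF_{Y_w}$, and the decomposition theorem does not apply to $\mathscr{H}\mathit{om}$ between them; Lemma~\ref{lem:hom-semis} likewise concerns only simple (pure) objects. What BGS actually prove in Step~2 is a statement about one specific object: the endomorphism Hodge structure of the big $\exs$-projective (dually, of $\tilde I_e$) is a direct sum of Tate structures $\uF(n)$ --- a Soergel-type computation identifying it with the cohomology of the flag variety --- and semisimplicity of $\uHom(\tilde P_s,\tilde P_t)$ then follows because special objects are subobjects of sums of twists of $\tilde I_e$, and subquotients of semisimple Hodge structures are semisimple. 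Without that input about the specific object $\tilde I_e$, the semisimplicity claim does not follow.
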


It follows from Proposition~\ref{prop:bgs-equiv} that special $\exs$-projective objects are in fact projective as objects of the abelian category $\MHMs'(X;\tilde I_e)$, and that all objects (not just simple ones) are quotients of special $\exs$-projective objects.  Thus, we may return to the construction of Section~\ref{sect:degr} and require that the $\exs$-projective resolution $\tilde P^\bullet \to \tilde L$ actually be a \emph{special} $\exs$-projective resolution.  The functor $\degr$ and the vertical arrows in Figure~\ref{fig:cube} should henceforth be understood to be defined with respect to this kind of resolution.

Let $\MHmodss \uE$ be the full subcategory $\MHmodss \uE$ consisting of $\uE$-modules that are semisimple as objects of $\MHM_\cT(\pt)$.  Since $\ualpha_{\uE}$ is exact, it follows from Lemma~\ref{lem:bgs-special}\eqref{it:bgs-semis} that $\ualpha_{\uE}(\cF) \in \MHmodss \uE$ for all $\cF \in \MHMs'(X;\tilde I_e)$.

It therefore makes sense to consider the diagram in Figure~\ref{fig:bgs}, which resembles Figure~\ref{fig:cube} but in which the domain and target of $\ualpha_{\uE}$ have been replaced by smaller categories.  It is easy to see that
\begin{equation}\label{eqn:bgs-ss-equiv}
\gr: \MHmodss \uE \simto \rgmod \bE
\end{equation}
is an equivalence of categories.  On the other hand,~\eqref{eqn:bgs-ss-equiv} gives rise to a canonical isomorphism of rings $E \simto \bE$, and hence to a canonical equivalence $\mu$ making the bottom face of Figure~\ref{fig:bgs} commute.  The top face then commutes as well.

\begin{figure}
\[
\vcenter{\xymatrix@=10pt{
\MHMs'(X;\tilde I_e) \ar[rrr]^{\beta} \ar[ddd]_{\ualpha_{\uE}} \ar[dr]_{\exs} &&&
  \MHMso(X)^\half \ar'[d][ddd]^{\ualpha_{\bE}} \ar[dr]^{\degr} \\
& \Pervs(X) \ar[rrr]^(.33){\tilde \mu} \ar[ddd]_(.33){\alpha_E} &&&
  \Pervs(X) \ar[ddd]^{\alpha_{\bE}} \\ \\
\MHmodss \uE \ar'[r][rrr]_{\gr} \ar[dr]_{\exs} &&&
  \rgmod\bE \ar[dr]^{\For} \\
& \rmod E \ar[rrr]_{\mu} &&&
  \rmod \bE }}
\]
\caption{Comparison diagram for the category $\MHMs'(X;\tilde I_e)$}\label{fig:bgs}
\end{figure}

To summarize, $\MHMs'(X; \tilde I_e)$ and $\MHMso(X)$ are equivalent Koszul categories via $\beta$, and the two Koszul gradings on $\Pervs(X)$ given by $\exs$ and $\degr$ are related by the following result.

\begin{thm}
There is a canonical autoequivalence $\tilde\mu$ of $\Pervs(X)$ giving rise to an isomorphism of functors $\tilde\mu \circ \exs \cong \degr \circ \beta: \MHMs'(X; \tilde I_e) \to \Pervs(X)$. \qed
\end{thm}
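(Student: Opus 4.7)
The plan is to deduce the theorem as a corollary of the commutativity of the entire diagram in Figure~\ref{fig:bgs}. The front face commutes by the definition of $\tilde\mu$, the left, back, and right faces commute via the natural isomorphisms displayed in Section~\ref{subsect:modcat}, and every vertical arrow is an equivalence. Thus the problem reduces to exhibiting a canonical equivalence $\mu: \rmod E \simto \rmod \bE$ for which the bottom face of Figure~\ref{fig:bgs} commutes; the top face then automatically commutes, supplying the desired natural isomorphism $\tilde\mu \circ \exs \cong \degr \circ \beta$.

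To construct $\mu$ canonically, I would invoke Lemma~\ref{lem:bgs-special}\eqref{it:bgs-semis}. Since $\tilde P^0$ is a direct sum of special $\exs$-projectives $\tilde P_s$, the ring object $\uE = \uEnd(\tilde P^0)$ decomposes as a direct sum of the $\uHom(\tilde P_s, \tilde P_t)$, and so is semisimple in $\MHM_\cT(\pt)$. Any semisimple object of $\MHM_\cT(\pt)$ is a direct sum of pure objects, and hence has canonically split weight filtration; this yields a canonical isomorphism of tensor functors $\exs \simto \For \circ \gr$ on the full subcategory of semisimple objects. Evaluating at the ring $\uE$ produces a canonical ring isomorphism $E \simto \bE$, and I let $\mu$ be the equivalence $\rmod E \simto \rmod \bE$ that it induces. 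Evaluating instead at an arbitrary $M \in \MHmodss \uE$ produces a canonical isomorphism $\exs(M) \simto \For(\gr(M))$ of right $\bE$-modules (the right-hand action on $\exs(M)$ being via $E \simto \bE$), and this is precisely the statement that the bottom face of Figure~\ref{fig:bgs} commutes.

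With $\mu$ in hand, I set $\tilde\mu = \alpha_\bE^{-1} \circ \mu \circ \alpha_E$, so that the front face commutes by construction. A routine chase then combines the commutativities of all the other faces of the cube into the desired natural isomorphism $\tilde\mu \circ \exs \cong \degr \circ \beta: \MHMs'(X;\tilde I_e) \to \Pervs(X)$. The substantive input is the canonical splitting of weight filtrations on semisimple mixed Hodge structures, which is elementary once Lemma~\ref{lem:bgs-special}\eqref{it:bgs-semis} is in hand; the only remaining obstacle is bookkeeping, namely keeping track of how the four side-face natural isomorphisms compose around the three-dimensional diagram. Since no arbitrary choices enter the construction of $\mu$ beyond the data $\tilde P^\bullet$ already fixed for $\degr$, the resulting autoequivalence $\tilde\mu$ of $\Pervs(X)$ is canonical, completing the proof.
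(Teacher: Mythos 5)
Your proposal is correct and takes essentially the same route as the paper: invoke Lemma~\ref{lem:bgs-special}\eqref{it:bgs-semis} to see that $\uE$ is semisimple in $\MHM_\cT(\pt)$, use the canonical splitting of the weight filtration of semisimple objects to obtain the canonical ring isomorphism $E \simto \bE$ and the commutativity of the bottom face of Figure~\ref{fig:bgs}, and then deduce the top face. The one point to make explicit is that the $\exs$-projective resolution $\tilde P^\bullet$ fixed in Section~\ref{sect:degr} must be (re)chosen to be a \emph{special} one (the paper stipulates this, using that special $\exs$-projectives are genuinely projective in $\MHMs'(X;\tilde I_e)$); this is what justifies your assertion that $\tilde P^0$ is a direct sum of the $\tilde P_s$ and that $\ualpha_{\uE}$ carries $\MHMs'(X;\tilde I_e)$ into $\MHmodss\uE$.
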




\end{document}